\theoremstyle{plain}
\newtheorem{theorem}{Theorem}[section]
\newtheorem{defin}[theorem]{Definition}
\newtheorem{prop}[theorem]{Proposition}
\newtheorem{lemma}[theorem]{Lemma}
\newtheorem{remark}[theorem]{Remark}
\newcommand\numberthis{\addtocounter{equation}{1}\tag{\theequation}}
\newcommand{\argmax}{\operatornamewithlimits{\textrm{argmax}}}
\newcommand{\red}[1]{\textcolor{black}{#1}}
\newcommand{\q}{\mathfrak{q}}
\newcommand{\e}{\epsilon}
\begin{document}
\numberwithin{equation}{section}
\title{Lyapunov exponents of the SHE for general initial data}
\author{Promit Ghosal \and
Yier Lin}
	\address{P.\ Ghosal,
	Department of Mathematics, Massachusetts Institute of Technology, 77 Massachusetts Avenue
Cambridge, MA 02139-4307, U.S.A}
\email{promit@mit.edu}
	\address{Y.\ Lin,
	Department of Statistics, the University of Chicago,
	5747 S Ellis Ave, Chicago, IL 60637, U.S.A}
\email{ylin10@uchicago.edu}
\begin{abstract}
We consider the $(1+1)$-dimensional stochastic heat equation (SHE) with multiplicative white noise and the Cole-Hopf solution of the Kardar-Parisi-Zhang (KPZ) equation. We show an exact way of computing the Lyapunov exponents of the SHE for a large class of initial data which includes any bounded deterministic positive initial data and the stationary initial data. As a consequence, we derive exact formulas for the upper tail large deviation rate functions of the KPZ equation for general initial data.
\end{abstract}
\maketitle
\section{Background and Main result}
In this paper, we consider the solution of the $(1+1)$-dimensional SHE under general initial condition and ask how any positive moment of that solution grows as time goes to $\infty$. In particular, we take the logarithm of $p$-th moment of that solution for any $p\in \mathbb{R}_{>0}$ and show that the when scaled by time, those converge. Those limits are known as \emph{Lyapunov exponents} which are tied to the large deviation problem of the KPZ equation. Namely, the upper tail large deviation of the Cole-Hopf solution of the KPZ equation (centered by $\mathrm{time}/24$) is the Legendre-Fenchel dual of the Lyapunov exponent of the stochastic heat equation. To the best of our knowledge, our result is the first to provide exact computation of the positive real Lyapunov exponents of the SHE and the upper tail large deviation of the KPZ for general initial data.        
\smallskip

\noindent Let us recall the KPZ equation, written formally as 
\begin{equation}\label{eq:KPZDef}
\partial_t \mathcal{H}(t, x) = \frac{1}{2} \partial_{xx} \mathcal{H}(t, x) + \frac{1}{2} (\partial_x \mathcal{H}(t, x))^2 + \xi(t, x), \qquad 
\mathcal{H}(0, x) = \mathcal{H}_0(x). 
\end{equation}
The KPZ equation governs growth of the interface $\mathcal{H}(t,x)$ which is subjected to a roughening by the space-time white noise $\xi$. Due to the presence of $\xi$, the solution of the KPZ equation is ill-posed. A formal solution of the KPZ equation comes from the Cole-Hopf transform given as 
\begin{align}\label{eq:ColeHopf}
\mathcal{H}(t,x) := \log(\mathcal{Z}(t, x)),
\end{align}   
where $\mathcal{Z}(t, x)$ is the solution of the SHE:
\begin{equation}\label{eq:shegeneral}
\partial_t \mathcal{Z}(t, x) = \frac{1}{2} \partial_{xx} \mathcal{Z}(t, x) + \mathcal{Z}(t, x) \xi(t, x),\qquad 
\mathcal{Z}(0, x) = \exp(\mathcal{H}_0(x)).
\end{equation}
The SHE is pervasive in the diffusion theory of particles in random environment \cite{Mol96, Kho14}, continuous directed random polymers \cite{HHF, Com17} and many other fields. The solution theory of the SHE is well known \cite{Walsh86, BC95, Cor18} via It\^{o} integral theory or martingale problem.
By \cite[Theorem 2.4]{quastel2011introduction}, for any non-negative initial data $\mathcal{Z}_0$ such that $\int_A \mathcal{Z}_0(x)dx$ defines a Radon measure for any Borel set $A\subset \mathbb{R}$, then \eqref{eq:shegeneral} has unique solution if the following condition is satisfied:
\begin{align}\label{eq:JCcondition}
\mathbb{E}\big[\int_{-n}^n \mathcal{Z}_0(x)dx\big]\leq ce^{cn}
\end{align}
for some constant $c>0$.
The logarithm in \eqref{eq:ColeHopf} is well defined due to the strict positivity of the solution of the SHE \cite{Mue91}. The Cole-Hopf solution correctly approximates discrete growth processes \cite{BG97,CT17,CGST, Lin20} and has shown to appear naturally in various renormalization and regularization schemes \cite{Hai13,GIP15,GP17}.   

The spatial derivative of the KPZ equation formally solves the stochastic Burgers equation - a continuum for the turbulence, interacting particle system driven, lattice gases and spin chains, see \cite{forster1977large, van1985excess, bertini1994stochastic,  ljubotina2019kardar, jin2020stochastic, ilievski2020superuniversality, de2020universality}.

Note that \eqref{eq:JCcondition} is satisfied for a large class of physically relevant initial conditions. This includes two important sub-classes, namely, the class of positive measurable functions on $\mathbb{R}$ with at most exponential growth in $x$ and the exponential of Brownian motion with linear drifts. The main goal of this paper is to derive Lyapunov exponents of the SHE started from a initial data coming from any of the above two sub-classes.

Fix any $f$ such that $e^{f}$ satisfies \eqref{eq:JCcondition}. 
We denote the SHE solutions started from $f$ by $\mathcal{Z}^{f}$ and the corresponding Cole-Hopf solution of the KPZ equation by $\mathcal{H}^{f}$. 
For any $p\in \mathbb{R}_{>0}$, we define the $p$-th moment Lyapunov exponent as 
\begin{align}
\mathrm{Lya}_{p}(f) := \lim_{t\to \infty} \frac{1}{t}\log \mathbb{E}\Big[\big(\mathcal{Z}^{f}(t,0)\big)^p\Big].
\end{align}
For the KPZ equation, we consider the upper tail probability $\mathbb{P}(\mathcal{H}^{f}(t,0)+\frac{t}{24}\geq st)$ where $s$ is a positive real number. We ask what is the upper tail large deviation rate function, namely, what is the limit of $t^{-1}\log \mathbb{P}(\mathcal{H}^{f}(t,0)+\frac{t}{24}\geq st)$ as $t$ goes to $\infty$.

Our result which we state as follows computes the $p$-th moment Lyapunov exponent of the SHE solutions $\{\mathcal{Z}^{f}\}_{t>0}$ and the upper tail large deviation rate function of the Cole-Hopf solution $\{\mathcal{H}^{f}\}_{t>0}$ when $f$ is deterministic. For any Borel measurable function $f$, let $\mathrm{Osc}_f(I) := sup_{x\in I}f(x) - \inf_{x\in I} f(x)$ be the oscillation of $f$ in the interval $I$.

\begin{theorem}[Deterministic initial data]\label{cor:detinitial}
Let $f$ be a non-random Borel measurable function with sub-exponential growth, i.e., there exist $c_1,c_2>0, \delta\in (0,1)$ such that 
\begin{align}\label{eq:detgrowth}
f(x)\leq c_1+c_2|x|^{\delta}, \quad \forall x\in \mathbb{R}. 
\end{align}
Suppose the local oscillations of $f$ is uniformly bounded, i.e., $\sup_{I\subset \mathbb{R}, |I|\leq \alpha}\mathrm{Osc}_{I}(f)<\infty$ for some $\alpha>0$.  
Then, we have the following:
\\
a) For any $p \in \mathbb{R}_{>0}$, 
\begin{equation}\label{eq:upperbound}
\mathrm{Lya}_p(f) = \frac{p^3 - p}{24}. 
\end{equation} 
b) For all $s \in \mathbb{R}_{>0}$, 
\begin{equation}\label{eq:detLDP}
\lim_{t \to \infty} \frac{1}{t} \log \mathbb{P}\Big(\mathcal{H}^{f} (t, 0) + \frac{t}{24} \,\geq\, st\Big) = -\frac{4 \sqrt{2}}{3} s^{\frac{3}{2}}.
\end{equation} 
\end{theorem}
Theorem~\ref{cor:detinitial} raises the following three important questions. We are indebted to three anonymous referees for urging us to to discuss those questions in details.  
\smallskip
\\
\noindent \emph{Why is \eqref{eq:detLDP} a large deviation result? Why does the speed of the large deviation probability in \eqref{eq:detLDP} equal to $t$?}
\smallskip
\\
\indent It is known that the order of the fluctuation of $\mathcal{H}^{f}(t,0)$ when centered by $-t/24$ is proportional to $t^{1/3}$ as $t$ increases. This result follows from \cite[Theorem 1.1]{ACQ11} when $f$ is the narrow wedge initial data and from \red{\cite[Theorem 1.1 and Theorem 1.10]{CG18b}} when $f$ belongs to a general class of non-random initial data. Note that the size of the deviation in \eqref{eq:detLDP} matches the order of growth of $\mathcal{H}^{f}(t,0)$ which is typical for a large deviation regime. This explains why \eqref{eq:detLDP} is a large deviation result.

Let us explain why the speed of the LDP in \eqref{eq:detLDP} is expected to be $t$. In \cite{CG18b}, the authors showed that the upper tail probability $\mathbb{P}(\mathcal{H}^{f}(t, 0) + \frac{t}{24}>zt^{1/3} )$ is sandwitched between $ e^{-c_1z^{3/2}}$ and $e^{-c_2z^{3/2}}$ for some constants $c_1>c_2>0$ for any finite $z,t>0$. Let us substitute $z=st^{2/3}$ in the above probability. Such substitution marks the large deviation regime as we explained earlier. Under such substitution, the upper and lower bounds on the logarithms of the upper tail probability are proportional to $s^{3/2}t$. This explains why the speed of the LDP is $t$ in \eqref{eq:detLDP}. 
\smallskip
\\
\noindent \emph{How does the sequence $\mathbb{E}[(\mathcal{Z}^{f}(t,x))^p]$ grow with $t$ when $x\neq 0$? What is the upper tail large deviation principle for $\mathcal{H}^{f}(t,x)$ for $x\neq 0$?}
\smallskip
\\
\indent By the convolution principle which is explained in Section~\ref{sec:proofidea}, for any fixed Borel measurable initial data $f$ and $x\in \mathbb{R}$, the law of $\mathcal{Z}^{f}(t,x)$ is same as $\mathcal{Z}^{g}(t,0)$ where $g:\mathbb{R}\to \mathbb{R}$ is defined by $g(y):=f(y - x)$ for $y\in \mathbb{R}$. This being the case, it suffices to study the Lyapunov exponent of the SHE and upper tail large deviation of the KPZ equation at $ x=  0$. Theorem~\ref{cor:detinitial} naturally applies to general $x\in \mathbb{R}$ via shifting the initial data.

\begin{remark}
 Theorem~\ref{cor:detinitial} applies to a large class of initial data  of the SHE. Since the oscillation of any bounded positive initial data is uniformly bounded, Theorem~\ref{cor:detinitial} naturally applies to those initial data. In fact, Theorem~\ref{cor:detinitial} shows the $p$-th Lyapunov exponents of the SHE started from any bounded positive initial data is equal to $p(p^2-p)/24$ for any $p>0$. This gives a positive answer to a question by Xia Chen from \cite[page 1489]{ChenX15}. With mild modification of our proof techniques, one can also show that the Lyapunov exponents of the SHE started from bounded compactly supported initial data are same as that in Theorem~\ref{cor:detinitial}. See Theorem~\ref{thm:main} and Remark~\ref{rem:Compactly} for more details.       
\end{remark}

Our next result deals with the case of random initial data. In particular, we focus on the Brownian initial data for the KPZ equation. The existence and the uniqueness of the solution of the SHE started from the exponential of two-sided Brownian motion with linear drift follows from \eqref{eq:JCcondition}.

\red{\begin{theorem}[Brownian initial data]\label{cor:bminitial}
	Let  $B(x)$ be a two-sided (standard) Brownian motion. For any  $t>0,x\in \mathbb{R}$, define $f(x) := (\sigma_+ B(x) + a_+ x) \mathbbm{1}_{\{x >0 \}} - (-\sigma_- B(x) +  a_- x) \mathbbm{1}_{\{x < 0\}} $  where $\sigma_+, a_+$ and $\sigma_-, a_-$ are the diffusion and drift parameters for $x > 0$ and $x<0$ respectively. 
Then, we have  \\
a) For any $p > 0$, 
	\begin{equation}\label{eq:bmLyapunov}
	\mathrm{Lya}_{p}(f) = \frac{p^3}{24} - \frac{p}{24} +  \frac{p}{2} \max\big\{\frac{p\sigma^2_+}{2} + a_+, \frac{p\sigma^2_-}{2} + a_- ,0\big\}^2.
	\end{equation}
	b) 
	Let $a = \max(a_+, a_-)$.	We have 
	\begin{align}\notag
	&\lim_{t \to \infty} \frac{1}{t} \log \mathbb{P}\Big(\mathcal{H}^{f} (t, 0) + \frac{t}{24} \red{\, \geq \,} st\Big) \\
	\label{eq:bmldp}
	&=\begin{cases} -\max_{p > 0}\Big(sp - \big(\frac{p^3}{24} +  \frac{p}{2} \max\big\{\frac{p\sigma^2_+}{2} + a_+, \frac{p\sigma^2_-}{2} + a_- ,0\big\}^2)\Big) & s\geq \frac{\max(a, 0)^2}{2},
	\\	\red{0} & \red{s\leq \frac{\max(a, 0)^2}{2}.}
	\end{cases}
	\end{align}
The expression of the large deviation rate function is more explicit when $\sigma_+ = \sigma_- = 1$, we have if $a \geq 0$,
\begin{align}\label{eq:bmldp1}
\lim_{t \to \infty} \frac{1}{t} \log \mathbb{P}\Big(\mathcal{H}^{f} (t, 0) + \frac{t}{24} \, \geq \, st\Big) =\begin{cases} 
0 & s\leq \frac{a^2}{2},
\\
- \frac{2\sqrt{2}}{3} s^{\frac{3}{2}} + sa - \frac{a^3}{6} & s\geq \frac{a^2}{2}.
\end{cases}
\end{align}
If $a < 0$, then, 
\begin{equation}\label{eq:bmldp2}
\lim_{t \to \infty} \frac{1}{t} \log \mathbb{P}\Big(\mathcal{H}^{f}(t, 0) + \frac{t}{24} \, \geq\, st\Big) = 
\begin{cases}
0 & s \leq 0,\\
-\frac{4\sqrt{2}}{3}  s^{\frac{3}{2}}  & 0 \leq s \leq \frac{a^2}{2},\\
-\frac{2\sqrt{2}}{3} s^{\frac{3}{2}} + sa - \frac{a^3}{6} & s\geq \frac{a^2}{2} .
\end{cases}
\end{equation}
\end{theorem}
}
 The proofs of Theorem~\ref{cor:detinitial} and~\ref{cor:bminitial} will be given in Section~\ref{sec:Cor}.
\begin{remark}
\red{Set $\sigma_+ = \sigma_- = 1$, then \eqref{eq:bmLyapunov} becomes 
\begin{equation}\label{eq:bmLyapunov1}
\mathrm{Lya}_{p}(f) = \frac{p^3}{24} - \frac{p}{24} +  \frac{p}{2} \max\big\{\frac{p}{2} + a, 0\big\}^2.
\end{equation}}
Theorem~\ref{cor:bminitial} shows a sharp transition of the Lyapunov exponent in \eqref{eq:bmLyapunov1} as the slope parameter $a$ crosses the value $ -\frac{p}{2}$. Note that $\mathbb{E}[e^{pf(x)}] \leq \exp(p(\tfrac{p}{2}+a)|x|)$ for any $x\in \mathbb{R}$ where $f$ is same as in Theorem~\ref{cor:bminitial}. When $a\leq -\frac{p}{2}$, $\mathbb{E}[e^{pf(x)}]$ is either exponentially decaying or, bounded as $|x| $ goes to $\infty$. This effectively ensures (as we show in the proof of Theorem~\ref{cor:bminitial}) that the role of the initial data $e^{f}$ for the SHE is just similar to any bounded positive initial data when it comes to computing $p$-th moment of $\mathcal{Z}^{f}(t,0)$ with $a$ being less than or equal to $-p/2$. Hence, it is not surprising to expect that the $p$-th Lyapunov exponent in \eqref{eq:bmLyapunov} is same as in \eqref{eq:upperbound} in the latter case. On the other hand, $\mathbb{E}[e^{pf(x)}]$ is exponentially growing in $|x|$ when $a$ is greater than $-p/2$. As we will explain in Section~\ref{sec:proofidea}, the exponential growth of $\mathbb{E}[e^{pf(x)}]$ insinuates a tighter dependence of the growth of $p$-th moment on the initial data which finally makes the Lyapunov exponent for the $a>-p/2$ case be rather different than Theorem~\ref{cor:detinitial}. 
\end{remark}

\begin{remark}\label{rem:BmRem}
It is worthwhile to note the contrast between the upper tail large deviation probability (LDP) of the KPZ equation under constant or, narrow wedge initial data and the Brownian initial data. The large deviation rate function in \eqref{eq:bmldp} is $-\frac{2\sqrt{2}}{3}s^{3/2}$ when both the drift parameters $a_{+}$ and $a_{-}$ are equal to $0$. Comparing this rate function with \eqref{eq:detLDP} or, \cite[Theorem 1.1]{DT19} shows a difference by a factor of $2$. This difference between the upper tail LDPs is consistent with the difference of the upper tail asymptotics of the KPZ equation under KPZ scaling. In the physics literature, the contrast between the LDPs under different initial data of the KPZ equation is echoed in \cite{LMS16, LMRS16, janas2016dynamical,MS17, krajenbrink2017exact}.\footnote{In fact, \cite{LMRS16, janas2016dynamical,MS17, krajenbrink2017exact} study the tails of the KPZ equation in the short time regime, yet it is widely believed that the constants in front of the $s^{\frac{3}{2}}$ in the upper tail are the same for short time and long time.} Our result rigorously confirms those predictions. Interestingly, a recent work \cite{ferrari2021upper} has shown that the bounds for the upper tail of the KPZ fixed point (constructed in \cite{MQR16,DOV18}) started from Brownian initial data (resp. any bounded initial data) coincides with the LDP rate function from Theorem~\ref{cor:bminitial} (resp. Theorem~\ref{cor:detinitial}).  
\end{remark}


\subsection{Proof Ideas \& Generalizations}
\label{sec:proofidea}
In this section, we will present the proof ideas of our main results. While this will remain as our main goal throughout this section, to highlight the insights of our techniques, we will also present a generalization of our results for a much broader class of initial data containing those which are covered in Theorem~\ref{cor:detinitial} and~\ref{cor:bminitial}. We first introduce the key tools for our proof. The sequential assembly of these tools which will be sketched thereafter, requires few minimal assumptions on the initial data of the SHE. At the very end of our description of the proof strategy, we will state a general result on the Lyapunov exponents and upper tail LDP of the KPZ equation for the initial data satisfying those minimal assumptions.

We start with setting some necessary notations. The \emph{narrow wedge} solution  $\mathcal{H}^{\mathbf{nw}}$ of the KPZ is the Cole-Hopf transform of the fundamental solution of SHE $\mathcal{Z}^{\mathbf{nw}}$, which is associated to the the delta initial data $\mathcal{Z}^{\mathbf{nw}}(0,x)= \delta_{x=0}$. As $t$ goes to $0$, $\log \mathcal{Z}^{\mathbf{nw}}(t,x)$ is well approximated by the heat kernel whose logarithm is given by a thin parabola $\frac{x^2}{2t}$, rendering $\mathcal{H}^{\mathbf{nw}}$ to have \emph{narrow wedge} like structure.

The proof of our main results consists of following tools: $(1)$ a composition law which connects the SHE under general initial data with its fundamental solution, $(2)$ Lyapunov exponents of the fundamental solution of the SHE, and $(3)$ tails bounds on the spatial regularity of the narrow wedge solution of the KPZ. 

The following identity which is taken from Lemma 1.18 of \cite{CH16} gives a convolution formula of the one point distribution of the KPZ equation in terms of the spatial process $\mathcal{Z}^{\mathbf{nw}}(t,\cdot)$ and the initial data $f$ of the KPZ equation. The proof of this formula is associated to the linearity and time reversal property of the SHE. For reader's convenience, we provide a proof of this identity in Section~\ref{sec:Aux3}. 

\begin{prop}[Convolution Formula]\label{prop:convolution}
Let $\mathcal{Z}^{f}$ be the unique solution of the SHE started from the initial condition $e^{f}$ such that $\int_{A} e^{f(x)} dx$ is a Radon measure in $A$. Suppose $f$ is independent of the white noise $\xi$ of \eqref{eq:shegeneral} when $f$ is random. Then for any $t>0$, 
\begin{equation}\label{eq:conv}
\mathcal{Z}^f (t, 0) \overset{d}{=} \int \mathcal{Z}^{\mathbf{nw}}(t, y) e^{f (y)} dy.
\end{equation}
\end{prop}

To complement Proposition~\ref{prop:convolution}, we will make use of the exact expressions of any real positive moment Lyapunov exponents of the fundamental solution of the SHE from \cite{DT19} and the tail bounds on the spatial regularity of the narrow wedge solution of the KPZ equation from \cite{CGH19}. The following result describes the first of these two tools.  
  \begin{prop}[Lyapunov exponents of fundamental solution, \cite{DT19}, Theorem 1.1]\label{prop:DT19thm}
For every $p > 0$, 
\begin{equation*}
\lim_{t \to \infty} \frac{1}{t} \log \mathbb{E}\Big[\mathcal{Z}^{\mathbf{nw}}(t, 0)^p\Big] = \frac{p^3-p}{24}.
\end{equation*}
\end{prop}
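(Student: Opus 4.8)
Since this is \cite[Theorem~1.1]{DT19}, I only outline the strategy I would follow. The plan is to establish the formula first for integer $p$ by classical means, and then upgrade to all real $p>0$ using sharp one-point tail bounds for $\mathcal{H}^{\mathbf{nw}}$ together with a Laplace-transform computation.

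\textbf{Integer moments.} For $p=k\in\mathbb{Z}_{>0}$ I would invoke the moment (Feynman--Kac) formula: $u_k(t,x_1,\dots,x_k):=\mathbb{E}\big[\prod_{i=1}^{k}\mathcal{Z}^{\mathbf{nw}}(t,x_i)\big]$ solves the $k$-particle attractive delta Bose gas, whose long-time behaviour is governed by the ground-state eigenvalue. In the normalization induced by $\xi$ the ground-state energy on $k$ particles equals $-\tfrac{1}{24}(k^3-k)$, so $u_k(t,0,\dots,0)=\exp\big(\tfrac{k^3-k}{24}t+o(t)\big)$ (the delta initial data $\prod_i\delta_0$ only contributes a sub-exponential prefactor), which is the claimed formula at integer $p$. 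This is classical (see \cite{BC95} and the physics literature).

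\textbf{Real $p$ via tails.} To reach an arbitrary $p>0$ I would combine the one-point upper-tail estimates for $\mathcal{H}^{\mathbf{nw}}$, which rest on the exact Fredholm-determinant formula for its distribution (Amir--Corwin--Quastel, Borodin--Corwin) and give, for $t\ge t_0$ and $s$ in a large enough window,
\begin{equation*}
\exp\!\big(-(\tfrac{4\sqrt2}{3}+o(1))\,s^{3/2}t\big)\;\le\;\mathbb{P}\big(\mathcal{H}^{\mathbf{nw}}(t,0)+\tfrac{t}{24}\ge st\big)\;\le\;\exp\!\big(-(\tfrac{4\sqrt2}{3}-o(1))\,s^{3/2}t\big),
\end{equation*}
with a Laplace-principle argument. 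Writing $\mathbb{E}\big[\mathcal{Z}^{\mathbf{nw}}(t,0)^p\big]=e^{-pt/24}\,\mathbb{E}\big[e^{p(\mathcal{H}^{\mathbf{nw}}(t,0)+t/24)}\big]$ and integrating the tail, one gets
\begin{equation*}
\tfrac1t\log\mathbb{E}\big[e^{p(\mathcal{H}^{\mathbf{nw}}(t,0)+t/24)}\big]\;\longrightarrow\;\sup_{s\ge0}\Big\{ps-\tfrac{4\sqrt2}{3}s^{3/2}\Big\}=\tfrac{p^3}{24}
\end{equation*}
(the maximizer is $s_\ast=p^2/8$), and subtracting $p/24$ yields $(p^3-p)/24$. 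The lower bound on the supremum is the elementary inequality $\mathbb{E}[\mathcal{Z}^{\mathbf{nw}}(t,0)^p]\ge e^{p(st-t/24)}\,\mathbb{P}(\mathcal{H}^{\mathbf{nw}}(t,0)+t/24\ge st)$ evaluated at $s=s_\ast$. For the matching upper bound one splits $\int_0^\infty p\,e^{pst}\,\mathbb{P}(\mathcal{H}^{\mathbf{nw}}(t,0)+t/24\ge st)\,ds$ into a compact window $s\in[0,R]$, on which the sharp-constant upper-tail bound plus a Riemann-sum estimate recovers $\sup_{s\le R}\{ps-\tfrac{4\sqrt2}{3}s^{3/2}\}$ (and $R$ can be taken large enough that this window contains $s_\ast$), and the far tail $s>R$, on which the crude estimate $\mathbb{P}(\mathcal{Z}^{\mathbf{nw}}(t,0)\ge e^{st-t/24})\le e^{-k(st-t/24)}u_k(t,0,\dots,0)$ with an integer $k>p$ gives $e^{pst}\,\mathbb{P}(\mathcal{H}^{\mathbf{nw}}(t,0)+t/24\ge st)\le e^{((p-k)s+k^3/24+o(1))t}$, negligible compared with $e^{p^3t/24}$ once $R=R(p)$ is large.

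\textbf{Main obstacle.} The crux is the \emph{sharp} leading constant $\tfrac{4\sqrt2}{3}$ in the one-point upper tail, valid uniformly in $t$ over a window of $s$ containing $s_\ast=p^2/8$; this forces one to exploit the integrable structure (the Airy point process / Fredholm-determinant formula) rather than soft moment inequalities, which are far too lossy here (e.g.\ Paley--Zygmund applied to $(\mathbb{E}[\mathcal{Z}^{\mathbf{nw}}(t,0)^k])^2/\mathbb{E}[\mathcal{Z}^{\mathbf{nw}}(t,0)^{2k}]\asymp e^{-k^3t/4}$ misses the constant by more than an order of magnitude). The remaining steps — the Bose-gas ground-state energy and the Laplace bookkeeping, including the interchange of the $t\to\infty$ limit with the $s$-integral — are routine.
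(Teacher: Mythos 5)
The paper offers no proof of this proposition: it is imported verbatim as \cite[Theorem~1.1]{DT19}, and the entire point of Section~\ref{sec:MainTheorem} is to leverage this (and Proposition~\ref{prop: CGH 4.4}) as black-box inputs. So there is nothing in the paper to compare your argument against; what I can do is assess whether your proposed route would actually close.

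Your integer-moment step is the standard delta--Bose-gas heuristic and, suitably made rigorous (\`a la \cite{ChenX15,CG18b}), is sound. The issue is the second step. You propose to reach fractional $p$ by feeding a sharp-constant upper-tail large deviation estimate
$\mathbb{P}\big(\mathcal{H}^{\mathbf{nw}}(t,0)+\tfrac{t}{24}\ge st\big)=\exp\big(-(\tfrac{4\sqrt{2}}{3}+o(1))s^{3/2}t\big)$
into a Laplace/Varadhan argument. But in the literature the logical arrow points the other way: \cite{DT19} \emph{first} obtain tight two-sided bounds on the fractional moments directly from a Mellin--Barnes manipulation of the contour-integral (Laplace-transform) formula for $\mathcal{Z}^{\mathbf{nw}}$, \emph{then} deduce the one-point upper-tail LDP as a corollary via exactly the Legendre-type transfer that this paper isolates as Proposition~\ref{prop:intermittldp}. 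Prior to \cite{DT19} the rigorously available tail estimates (e.g.\ from \cite{CG18b}) have the correct exponent $3/2$ but \emph{not} the sharp constant $\tfrac{4\sqrt{2}}{3}$, and a constant-lossy tail bound is worthless for your Laplace step since the constant is precisely what produces $p^{3}/24$. So as written your plan is circular: the input you treat as known is the output of the very theorem you are trying to reprove. To break the circularity you would have to extract the sharp leading order of $\mathbb{P}\big(\mathcal{H}^{\mathbf{nw}}(t,0)+\tfrac{t}{24}\ge st\big)$ uniformly over a window of $s$, directly from the finite-$t$ Fredholm determinant of \cite{ACQ11}; this is a nontrivial piece of Airy/steepest-descent analysis which, to my knowledge, had not been carried out independently of the moment computation, and it is essentially the hard part of \cite{DT19} repackaged. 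In short, your outline is not wrong as a logical scheme, but it relocates rather than removes the main difficulty, and it hides a dependency on a sharp-constant tail estimate that is not an off-the-shelf input. The Laplace bookkeeping and the far-tail truncation via integer moments are fine and would be routine once that ingredient is secured.
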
  

The third of our main tools is (super)-exponential tail bounds on the spatial regularity of the narrow wedge solution of the KPZ equation. This is given in the following result.

\begin{prop}[Tail bounds of increments, \cite{CGH19}, Prop. 4.4] \label{prop: CGH 4.4} 
For any $t_0 > 1$, $\nu > 0$ and $\epsilon\in (0,1)$, there exist $s_0 = s_0(t_0, \nu,\epsilon)$ and $c = c(t_0, \nu,\epsilon)$ such that, for $t \geq t_0$ and $s \geq s_0$,
\begin{align*}
\numberthis
\label{eq:increment upper tail}
\mathbb{P}\Big(\sup_{x \in [0, t^{\frac{1}{3}}]} \Big\{\mathcal{H}^{\mathbf{nw}} (t,x) - \mathcal{H}^{\mathbf{nw}} (t,0) - \frac{\nu x^2}{2}\Big\} \geq s\Big) \leq \exp(-c s^{\frac{9}{8}-\epsilon}),
\\
\numberthis
\label{eq:increment lower tail}
\mathbb{P}\Big(\inf_{x \in [0, t^{\frac{1}{3}}]} \Big\{\mathcal{H}^{\mathbf{nw}} (t,x) - \mathcal{H}^{\mathbf{nw}} (t,0) + \frac{\nu x^2}{2}\Big\} \leq - s\Big) \leq  \exp(-c s^{\frac{9}{8}-\epsilon}).
\end{align*}
\end{prop}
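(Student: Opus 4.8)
\emph{Proof proposal.} Proposition~\ref{prop: CGH 4.4} is the spatial-regularity estimate for the narrow-wedge KPZ solution established in \cite[Prop.~4.4]{CGH19}; I would prove it along the lines of that reference, via the KPZ line ensemble. The backbone is the \emph{KPZ$_t$ line ensemble} of Corwin--Hammond: for each fixed $t>0$, $x\mapsto\mathcal{H}^{\mathbf{nw}}(t,x)$ is the top curve of an $\mathbb{N}$-indexed ensemble $\{\mathcal{H}_t^{(n)}\}_{n\ge1}$ that, after the parabolic shift $(\cdot)\mapsto(\cdot)+\tfrac{x^2}{2t}$ and the usual diffusive rescaling, satisfies the $\mathbf{H}_t$-Brownian Gibbs property with steep Hamiltonian $\mathbf{H}_t(y)=e^{t^{1/3}y}$. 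I would exploit two consequences. First, a \emph{resampling comparison}: conditionally on every curve outside an interval $I$ and on the second curve on $I$, the top curve on $I$ is a free Brownian bridge with the prescribed endpoints, reweighted by a factor in $(0,1]$ that only tilts it upward, so on the event that the second curve does not rise too high the associated Radon--Nikodym factor is bounded below and the law of the top-curve increment over $I$ is, up to a bounded multiplicative factor, controlled by that of the free bridge. Second, the \emph{Gaussian modulus of continuity} of a Brownian bridge, $\mathbb{P}\big(\sup_{u,v\in I,\,|u-v|\le\delta}|B(u)-B(v)|\ge r\big)\le C|I|\,\delta^{-1}e^{-cr^2/\delta}$. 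I would also invoke the uniform-in-$t$ one-point tail estimates for the rescaled KPZ$_t$ line ensemble.

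The plan is as follows. (1) \emph{Localize using the parabolic penalty.} Decompose $[0,t^{1/3}]=\bigcup_{k\ge0}I_k$ with $I_0=[0,1]$ and $I_k=[2^{k-1},2^k]\cap[0,t^{1/3}]$; on $I_k$ the event in \eqref{eq:increment upper tail} forces the increment of $\mathcal{H}^{\mathbf{nw}}(t,\cdot)$ over a window of length $\le 2^k$ anchored at $0$ to exceed $s+\tfrac{\nu}{8}2^{2k}$. Since $t_0>1$, every block lies in $[0,t^{2/3}]$, i.e.\ within one correlation length. (2) \emph{Resample each block.} On an interval strictly containing $I_k$ whose endpoint heights and second-curve values are controlled by the one-point bounds, apply the $\mathbf{H}_t$-Brownian Gibbs property to reduce the increment over $I_k$ to that of a free Brownian bridge over a window comparable to $2^k$, paying an off-event probability that, again by the $-\tfrac{\nu x^2}{2}$ penalty, is at most $\exp(-c(s+2^{2k})^{3/2})$. (3) \emph{Estimate and sum.} The free-bridge increment over $I_k$ exceeds $s+\tfrac{\nu}{8}2^{2k}$ with probability at most $\exp\!\big(-c(s+2^{2k})^2/2^k\big)$: for the $O(\log s)$ small blocks ($2^k\lesssim s^{1/2}$) this is at most $\exp(-cs^{3/2})$, while for the large blocks the parabolic gain dominates and block $k$ costs at most $\exp(-c\,2^{3k})$, which at the top scale $2^k\asymp t^{1/3}$ is $\exp(-ct)$, so the $O(\log t)$ large scales are summably super-suppressed. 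Adding the Step~(2) costs and summing over $k$ leaves $\exp(-cs^{3/2})$ up to a combinatorial factor, and absorbing that factor and the losses from the one-point inputs into the exponent gives the claimed $\exp(-cs^{9/8-\epsilon})$ for $s\ge s_0$. The lower-tail bound \eqref{eq:increment lower tail} comes from the mirror-image argument, with a lower bound on the second curve replacing the upper bound (the non-crossing constraint can only help bound the downward increment).

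The heuristic exponent is $\tfrac32$, produced by the Brownian-bridge estimate, and the loss to $\tfrac98-\epsilon$ comes from Step~(2): quantifying the resampling comparison — controlling the Radon--Nikodym factor built from the steep Hamiltonian $\mathbf{H}_t$, and controlling the second curve from above — with tails strong enough to survive the dyadic union bound and uniform over all $t\ge t_0$, including the large-$t$ regime where $[0,t^{1/3}]$ shrinks to a point on the natural unit scale of the rescaled ensemble. This uniform-in-$t$ bookkeeping is what I expect to be the main obstacle.
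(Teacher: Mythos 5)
This proposition is not proved in the paper at all: it is imported verbatim from \cite{CGH19}, Proposition~4.4, and used as a black box (the paper even labels it accordingly). There is therefore no ``paper's own proof'' to compare against. Your sketch is a reasonable top-level account of the line-ensemble argument in that reference: the $\mathbf{H}_t$-Brownian Gibbs property of the KPZ$_t$ line ensemble, resampling of the top curve against a Brownian bridge, Gaussian modulus-of-continuity bounds, the parabolic penalty absorbing large-$x$ contributions, and uniform-in-$t$ one-point tail inputs, with the gap between the heuristic $3/2$ and the stated $9/8-\epsilon$ coming from the losses incurred when controlling the acceptance probability and the second curve. That is roughly the architecture of \cite{CGH19}.

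One caveat worth flagging, since it is the crux of making such a sketch rigorous: for the \emph{upper}-tail estimate \eqref{eq:increment upper tail}, the Gibbs reweighting of the free bridge pushes the top curve \emph{up} (the Radon--Nikodym factor $W/\mathbb{E}[W]$ with $W\in(0,1]$ is controlled from above only once one has a \emph{lower} bound on the normalization $\mathbb{E}[W]$, not the ``bounded below'' you state). Getting that lower bound on the acceptance probability uniformly in $t$ and over the dyadic scales, together with the upper bound on the second curve, is precisely where the work in the reference lies; the free-bridge bound alone does not dominate the reweighted law's upper tail. You do acknowledge this in your last paragraph as the ``main obstacle,'' so the sketch is self-aware, but the sentence asserting that the reweighted increment is ``up to a bounded multiplicative factor, controlled by that of the free bridge'' is where the hard part is hidden and should not be read as already established.
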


In order to prove \eqref{eq:upperbound} (resp. \eqref{eq:bmLyapunov}) of Theorem~\ref{cor:detinitial} (resp. Theorem~\ref{cor:bminitial}), we first use Proposition~\ref{prop:convolution} to note 
\[\mathrm{Lya}_{p}(f) = \lim_{t\to \infty}\frac{1}{t}\log\Big(\mathbb{E}\Big[\Big(\int_{\mathbb{R}} \mathcal{Z}^{\mathbf{nw}}(t,y)e^{f(y)}dy\Big)^{p}\Big]\Big)\]
and thereafter, focus our effort to analyze $\mathbb{E}[(\int_{\mathbb{R}} \mathcal{Z}^{\mathbf{nw}}(t,y)\exp(f(y))dy)^{p}]$.
Our main technical achievement is to justify the following heuristic approximation 
\begin{align}\label{eq:Hue}
\frac{1}{t}\log \mathbb{E}\Big[\Big(\int_{\mathbb{R}} \mathcal{Z}^{\mathbf{nw}}(t,y)e^{f(y)}dy\Big)^{p}\Big] \approx \frac{1}{t}\log \int_{\mathbb{R}}\mathbb{E}\big[(\mathcal{Z}^{\mathbf{nw}}(t,y))^{p}\big]\mathbb{E}[e^{p f(y)}] dy,
\end{align}
for all large $t\in \mathbb{R}_{>0}$ where $``\approx"$ indicates the equality up to some additive constant which decays as $t\to \infty$. For showing this, we will observe that the main contributions of the left hand side of \eqref{eq:Hue} comes from $\mathcal{Z}^{\mathbf{nw}}(t,y_0)e^{f(y_0)}$ where $y_0$ is a point in $\mathbb{R}$ such that the function $\phi(y):=-\frac{py^2}{2}+\log \mathbb{E}[e^{pf(y)}]$ when evaluated at $y_0$ attains a close proximity to the supremum value $\sup_{y \in R}\phi(y)$. Similarly, we find that the main contribution of the right hand side of \eqref{eq:Hue} comes from $\mathbb{E}\big[(\mathcal{Z}^{\mathbf{nw}}(t,y_0))^{p}\big]\mathbb{E}[e^{pf(y_0)}]$. Note that the integrand in the right hand side of \eqref{eq:Hue} is a product of expectations. One of the main reason for expecting such product is the independence between the time-space white noise $\xi$ and the initial data $f$. 

For showing there are indeed such local representatives of both sides of \eqref{eq:Hue}, we require to demonstrate that the contributions of $\mathbb{E}\big[(\int_{\mathbb{R}\backslash B}\mathcal{Z}^{\mathbf{nw}}(t,y)e^{f(y)}dy)^p\big]$ and $\int_{\mathbb{R}\backslash B}\mathbb{E}\big[(\mathcal{Z}^{\mathbf{nw}}(t,y))^{p}\big]\mathbb{E}[e^{pf(y)}]dy$ cannot grow significantly higher than their local counterparts where $B$ is small interval around $y_0$. This is done by controlling fluctuation of the spatial process $\mathcal{Z}^{\mathbf{nw}}(t,\cdot)$ and  the growth and regularity of the initial data $f$. The fluctuation of $\mathcal{Z}^{\mathbf{nw}}(t,\cdot)$ is controlled by the tail probability bounds \eqref{eq:increment upper tail} and \eqref{eq:increment lower tail} on the spatial regularity of $\mathcal{H}^{\mathbf{nw}}(t,\cdot)$. The growth and regularity estimates of $f$ in Theorem~\ref{cor:detinitial} and~\ref{cor:bminitial} will be derived separately. However, one may wonder what are the minimal conditions on the growth and regularity of $f$ under which the current proof techniques work. We list those minimal sufficient conditions below after introducing few more notations.

To extend our result to the full generality, we also allow the initial data to depend on time $t$. We use the notation $f_t$ to denote time dependent initial data. We denote the SHE solutions started from $e^{f_t}$ by $\mathcal{Z}^{f_t}$. More precisely, for every fixed $t \geq 0$, $\mathcal{Z}^{f_t} (t, \cdot)$ equals the time-$t$ solution to the SHE with initial data $e^{f_t}$. As usual, we denote the corresponding Cole-Hopf solution of the KPZ equation by $\mathcal{H}^{f_t}:= \log \mathcal{Z}^{f_t}$.

We want to mention that the time dependent initial data is not unusual. Due to the connection with the KPZ universality class, it is natural to consider the initial data which varies in the $(t^{1/3}, t^{2/3})$-scale. For instance, \cite[Definition~1.4]{CH16} and  \cite[Definition~1.1]{CG18b} had considered $f_t$ to be a relevant initial data for the KPZ equation if $t^{-1/3}f_t(t^{2/3}x)$ is upper bounded by a parabola with leading constant less than $\frac{1}{2}$ and lower bounded by a constant in a compact interval around $0$. It is straightforward to check that when $f_t(x)$ is equal to $\alpha \frac{x^2}{2t}$, both of the above conditions are satisfied.

Now we state our conditions on $f_t$. For $x\in \mathbb{R}$ and $t\in \mathbb{R}_{\geq 0}$, define 
\begin{align}
M^{f_t}_{p}(t,x) := \begin{cases}
\mathbb{E}[e^{p f_t(x)}] & \text{when }f_t(x) \text{ is random},\\  e^{p f_t(x)} & \text{when }f_t(x) \text{ is deterministic}.
\end{cases}
\end{align}

\begin{enumerate}
\item[(1)] (Growth and lower bound conditions:) For each $p\in \mathbb{R}_{>0}$, there exist constants $C, K, L>0$ and $0 < \alpha < 1$ depending on $p$ such that for all $t> 0$,
\begin{align}\label{eq:growth condition}
M^{f_t}_p(t,x) \leq C (e^{C |x|} + e^{\frac{\alpha p x^2}{2t}}),\\
\label{eq:lower bound condition}
\sup_{x \in [-L, L]} \log M^{f_t}_p(t,x) >-K.
\end{align}

\item[(2)] (Pseudo-stationarity:) We call $\{\theta_{n}\}_{n\in \mathbb{Z}}\subset \mathbb{R}$ a sequence of grid points if it satisfies $$\ldots <\theta_{-1}<\theta_0=0<\theta_1<\ldots, \quad \lim_{n\to \infty}\theta_n=\infty, \quad \lim_{n\to -\infty}\theta_{n}=-\infty, \quad (\max\{c|n|,1\})^{-\beta}\leq |\theta_{n}-\theta_{n+1}|\leq 1$$ for some $c>0$, $\beta\in (0,1)$ and all $n\in \mathbb{Z}$. There exist constants $C, t_0,s_0>0$ and a sequence of grid points $\{\theta_{n}\}_{n\in \mathbb{Z}}$  such that for all $t>t_0$, $n\in \mathbb{Z}$, $\sup_{x\in [\theta_{n},\theta_{n+1}]}|f_t(x)- f_t(\theta_{n})|\leq s_0$ when $f_t$ is deterministic or, 
\begin{align}\label{eq:ModCont}
\mathbb{P}\Big(\sup_{x\in [\theta_{n},\theta_{n+1}]}|f_t(x)- f_t(\theta_{n})|\geq s\Big)\leq e^{-Cs^{1+\delta}} ,\qquad \forall s\geq s_0,
\end{align} 
when $f_t$ is random.
\end{enumerate}

The \emph{growth and lower bound condition} controls the pointwise exponential moments of the initial data. If the initial data is time independent, then, \eqref{eq:JCcondition} provides the sufficient condition for the existence and the uniqueness of the SHE. For the time dependent initial data $f_t$, the existence and the solution of the SHE till time $t$ is not explicitly written down anywhere. However, using the techniques in \cite{CD15} which is based on the chaos expansion of the mild solution of the SHE, it is straightforward to check that the SHE starting from $e^{f_t}$ has unique solution up to time $t$ if the following condition is satisfied:
\begin{align}\label{eq:CDcondition}
\int e^{-\frac{x^2}{2t}} \mathbb{E}[e^{f_t(x)}] dx<\infty.
\end{align} 
Not only the growth and lower bound condition ensures the existence and uniqueness solution of the stochastic heat equation up to desired time, but also helps to provide many error estimates in the proof of Theorem~\ref{thm:main}.

The \emph{pseudo-stationarity} condition provides an uniform upper bound to the maximal variation of the initial data one a sequence of intervals. Since the initial data can satisfy this property without being spatially stationary, we call this  pseudo-stationarity condition.
 
One of the easy examples where taking constant grid in the pseudo-stationarity is not enough is the time dependent parabolic initial data, i.e., $f_t(x) := \alpha x^2/2t$. Since $f_t$ satisfies \eqref{eq:CDcondition}, the solution of the Cole-Hopf solution of the KPZ started from $f_t$ exists up to time $t$. The oscillations of $f_t(x)$ over any set of grid points of constant spacing will be increasing as we increase $x$ to $\infty$. However, if we define $\theta_n$ to be $\mathrm{sign}(n)\times |n|^{1/2}$, then the oscillation of $f_t(x)$ over $[\theta_n,\theta_{n+1}]$ is bounded by $1/2t$ which does not depend $n$. In this example, it is mandatory to take grid points of different spacing.


Let us again come back to analyzing the right hand side of \eqref{eq:Hue}. To analyze the integral on the right hand side of \eqref{eq:Hue}, one may first ask how we deal with $\mathbb{E}\big[(\mathcal{Z}^{\mathbf{nw}}(t,y))^{p}\big]$ for all $y\in \mathbb{R}$. This will be done by combining Proposition~\ref{prop:DT19thm} with the following result.

\begin{prop}[Stationarity, \cite{ACQ11}, Prop. 1.4]\label{prop:stationarity}
For any fixed $t > 0$, the random process $\mathcal{H}^{\textbf{nw}} (t, x) + \frac{x^2}{2t}$ is stationary in $x$. 
\end{prop}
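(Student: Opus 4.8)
\textbf{Proof proposal.}
The plan is to reduce the assertion to a shear-invariance property of the space-time white noise. Write $\mathbf{p}(t,x) = (2\pi t)^{-1/2} e^{-x^2/(2t)}$ for the standard heat kernel and set $\widetilde{\mathcal{Z}}(t,x) := \mathcal{Z}^{\mathbf{nw}}(t,x)/\mathbf{p}(t,x)$. Since $\log \mathbf{p}(t,x) = -\tfrac12\log(2\pi t) - \tfrac{x^2}{2t}$, we have
\[
\mathcal{H}^{\mathbf{nw}}(t,x) + \frac{x^2}{2t} = \log \widetilde{\mathcal{Z}}(t,x) - \frac{1}{2}\log(2\pi t),
\]
so, the term $\tfrac12\log(2\pi t)$ being deterministic, it suffices to prove that for each fixed $t>0$ the random field $x \mapsto \widetilde{\mathcal{Z}}(t,x)$ is stationary in $x$.

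Next I would represent $\widetilde{\mathcal{Z}}(t,\cdot)$ through its Wiener chaos expansion $\mathcal{Z}^{\mathbf{nw}}(t,x) = \sum_{n\geq 0} \int_{0<s_1<\cdots<s_n<t}\int_{\mathbb{R}^n} \mathbf{p}(s_1,y_1)\mathbf{p}(s_2-s_1,y_2-y_1)\cdots\mathbf{p}(t-s_n,x-y_n)\,\xi(ds_1,dy_1)\cdots\xi(ds_n,dy_n)$. Dividing the $n$-th kernel by $\mathbf{p}(t,x)$ turns it into the joint density at $(y_1,\dots,y_n)$ of $(W_{s_1},\dots,W_{s_n})$ for a Brownian bridge $W$ from $(0,0)$ to $(t,x)$; decomposing $W_s = \tfrac{s}{t}x + W^0_s$ with $W^0$ a bridge from $(0,0)$ to $(t,0)$, that density equals $q_n\big(s_1,\dots,s_n;\, y_1 - \tfrac{s_1}{t}x,\dots, y_n - \tfrac{s_n}{t}x\big)$, where $q_n$ is the corresponding density of the standard bridge $W^0$. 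Letting $\Phi_x$ be the shear $(s,y)\mapsto (s, y+\tfrac{s}{t}x)$ of $[0,t]\times\mathbb{R}$ and $\xi^{(x)} := \xi\circ\Phi_x$ its pushforward of the noise, the change of variables $y_i \mapsto y_i + \tfrac{s_i}{t}x$ in each multiple integral yields
\[
\widetilde{\mathcal{Z}}(t,x) = \sum_{n\geq 0} \int_{0<s_1<\cdots<s_n<t}\int_{\mathbb{R}^n} q_n(s_1,\dots,s_n;y_1,\dots,y_n)\, \xi^{(x)}(ds_1,dy_1)\cdots\xi^{(x)}(ds_n,dy_n) =: F\big(\xi^{(x)}\big),
\]
where the functional $F$ does not depend on $x$. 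One also checks directly that $\Phi_{x+a} = \Phi_x\circ\Phi_a$, hence $\xi^{(x+a)} = (\xi^{(a)})^{(x)}$.

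To conclude, observe that a shear has Jacobian $1$, so $\Phi_a$ preserves Lebesgue measure on $[0,t]\times\mathbb{R}$; therefore $\xi^{(a)} = \xi\circ\Phi_a$ is again a space-time white noise with the same law as $\xi$. Consequently, jointly in $x$,
\[
\big\{\widetilde{\mathcal{Z}}(t,x+a)\big\}_{x\in\mathbb{R}} = \big\{F\big((\xi^{(a)})^{(x)}\big)\big\}_{x\in\mathbb{R}} \overset{d}{=} \big\{F\big(\xi^{(x)}\big)\big\}_{x\in\mathbb{R}} = \big\{\widetilde{\mathcal{Z}}(t,x)\big\}_{x\in\mathbb{R}},
\]
which is exactly stationarity of $\widetilde{\mathcal{Z}}(t,\cdot)$, and hence of $\mathcal{H}^{\mathbf{nw}}(t,\cdot) + \tfrac{(\cdot)^2}{2t}$. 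The hard part is the middle step: the change of variables inside the multiple stochastic integrals and the identification $\widetilde{\mathcal{Z}}(t,x) = F(\xi^{(x)})$ must be justified at the level of the $L^2$-convergent chaos series, using that the kernels $q_n$ are square-integrable with summable $L^2$-norms (standard for the SHE), that the shear acts on the noise by the stated pushforward, and that these operations commute with the $L^2$ limit; since this is precisely the content of \cite{ACQ11}, Prop. 1.4, one may either carry out these routine but slightly delicate verifications or simply cite that reference.
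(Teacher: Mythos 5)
Your proof is correct and follows the same shear-invariance argument underlying the cited reference \cite{ACQ11}, Prop.~1.4 (the paper itself does not reproduce the proof, but only cites it). The chaos-expansion reduction, the Brownian bridge decomposition $W_s = \tfrac{s}{t}x + W^0_s$, the identification $\widetilde{\mathcal{Z}}(t,x) = F(\xi^{(x)})$ with $F$ independent of $x$, and the observation that the measure-preserving shear leaves white noise invariant in law are exactly the ingredients of the standard argument; the technical points you flag at the end (change of variables inside the $L^2$-convergent multiple Wiener integrals, joint-in-$x$ measurability of $\eta \mapsto \{F(\eta^{(x)})\}_{x}$) are genuine but routine, and your outline of how to justify them is accurate.
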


 Proposition~\ref{prop:stationarity} allows us to write the right hand side of \eqref{eq:Hue} as a sum of two terms, namely, $t^{-1}\log(\mathbb{E}[(\mathcal{Z}^{\mathbf{nw}}(t,0))^{p}])$ and $t^{-1}\log\big(\int_{\mathbb{R}}\exp(-\frac{px^2}{2t})\mathbb{E}[e^{p f_t (x)}]dx\big)$. To compute the $t \to \infty$ limit of the right hand side of \eqref{eq:Hue}, it suffices to compute the $t\to \infty$ limits of these two terms
 respectively.
 By Proposition~\ref{prop:DT19thm}, the first limit equals $\frac{p^3 - p}{24}$. Our proof will show that the second limit equals $g(p)$ (defined in \eqref{eq:Gp}), which is guaranteed by our next condition.
 
 \begin{enumerate}
 \item[(3)] (Coherence conditions:) 
 The following limit exists
 \begin{align}
\lim_{t \to \infty} \frac{1}{t} \sup_{x \in \mathbb{R}} \Big\{\frac{-p x^2}{2t} + \log M^{f_t}_p(x,t)\Big\}=:g(p)\label{eq:Gp}
\end{align}
for all $p\in \mathbb{R}_{>0}$.
Furthermore, for every $p\in \mathbb{R}_{>0}$,
\begin{equation}\label{eq:condition 3}
\liminf_{\epsilon \to 0} \limsup_{t \to \infty} \frac{1}{t} \log \bigg(\int e^{-\frac{p(1-\epsilon) x^2}{2t}}M^{f_t}_{p(1+\epsilon)}(t,x) dx\bigg) \leq g(p). 
\end{equation}
 \end{enumerate}
The \emph{coherence condition} captures the contribution of the initial data in the expression of the Lyapunov exponents via solution of a variational problem. On the one hand, it enforces the existence of the solution of that variation problem  and on the other hand, it imparts sufficient convexity to the initial data so that the solution of that variational problem stay inside large bounded domain.

Next we state a more general result on Lyapunov exponents for the class of initial data satisfying the above conditions.
\begin{defin}\label{Def:HypDef}
We call a set of measurable functions $(g,\{f_{t}\}_{t> 0})$ with deterministic $g:\mathbb{R}_{>0} \to \mathbb{R}_{\geq 0}$ and $f_{t}:\mathbb{R}\to \mathbb{R}$ (possibly random) belongs to the class $\mathbf{Hyp}$ if $\{f_{t}\}_{t\geq 0}$ satisfies (1) growth and lower bound conditions, (2) pseudo-stationarity condition and (3) coherence conditions. 
\end{defin}
For any $(g,\{f_t\}_{t> 0})\in \mathbf{Hyp}$, there exists a unique solution of the SHE started from initial data $e^{f_t}$ up to time $t$ thanks to the growth and lower bound condition, see \eqref{eq:CDcondition}.

For any $p\in \mathbb{R}_{>0}$, we define the $p$-th moment Lyapunov exponent of the class $\{f_t\}_{t>0}$ as 
\begin{align}
\mathrm{Lya}_{p}(\{f_t\}_{>0}) := \lim_{t\to \infty} \frac{1}{t}\log \mathbb{E}\Big[\big(\mathcal{Z}^{f_t}(t,0)\big)^p\Big].
\end{align}
Now we are ready to state the more general result on the Lyapunov exponents. 
\begin{theorem}\label{thm:main}
 Let $\big(g,\{f_t\}_{t\geq0}\big)$ be a set of functions in the class $\mathbf{Hyp}$. Then, we have the following:
 \begin{enumerate}
\item[(a)] For any $p\in \mathbb{R}_{>0}$, \begin{align}\label{eq:intermittency}
\mathrm{Lya}_{p}(\{f_t\}_{t>0}) = \frac{p^3-p}{24} +g(p) .
\end{align} 
\item[(b)] Suppose $g(p)\in C^{1}(\mathbb{R}_{>0})$ and $ \zeta:=\lim_{p\to 0}g^{\prime}(p)$ is finite. Then, for $s > \zeta$, 
\begin{align}\label{eq:ldp}
\lim_{t\to \infty} \frac{1}{t} \log \mathbb{P}\Big(\mathcal{H}^{f_t}(t,0)+ \frac{t}{24} \, \geq\,   st\Big) = -\max_{p\geq 0} \big\{sp -\frac{p^3}{24}-g(p) \big\}. 
\end{align} 
\end{enumerate}
\end{theorem} 
As we have pointed out before, our techniques for proving Theorem~\ref{cor:detinitial} and~\ref{cor:bminitial} are flexible enough to extend to the initial data coming from the class $\mathbf{Hyp}$. In fact, we first prove Theorem~\ref{thm:main} in Section~\ref{sec:MainTheorem} and then, prove Theorem~\ref{cor:detinitial} and~\ref{cor:bminitial} as its corollary. For instance, Theorem~\ref{cor:detinitial} will be proved using Theorem~\ref{thm:main} by defining $g(p):=0$ and $f_t:= f$ (with $f$ satisfying the condition in Theorem \ref{cor:detinitial}) for all $t>0$ and finally, showing $(g,\{f_t\}_{t\geq 0})$ belongs to the class $\mathbf{Hyp}$.
\begin{remark}\label{rem:Compactly}
It is straightforward to check that if $f_t = f$ for all $t$ such that $e^{f}$ is strictly positive and bounded on a compact interval, then $\{f_t\}_{t>0}$ satisfies the growth and lower bound conditions (\eqref{eq:growth condition} and \eqref{eq:lower bound condition}) and the coherence condition (with $g = 0$). However, one needs minor modification of the pseudo-stationarity condition for it to be valid when $e^{f}$ is compactly supported. After doing the appropriate modifications, one can use similar proof ideas to extend the results of Theorem~\ref{thm:main} in the case of compactly supported initial data for the SHE.
\end{remark}
We have explained the ideas of obtaining \eqref{eq:intermittency}. 
However, it still remains to explain how \eqref{eq:ldp} follows from \eqref{eq:intermittency}. 
For showing \eqref{eq:ldp} of Theorem~\ref{thm:main}, our main tool is the following proposition which relates the upper tail large deviation rate function of $\mathcal{H}^{f_t}$ in terms of the Lyapunov exponents. 

\begin{prop}\label{prop:intermittldp}
Let $X(t)$ be a stochastic process indexed by $t\in \mathbb{R}_{>0}$. Fix $h \in C^{1}(\mathbb{R}_{> 0})$ such that $h': (0, \infty) \to (\zeta, \infty)$ is continuous, bijective and increasing for some $\zeta \in \mathbb{R}$. Assume that
\begin{equation}\label{eq:temp1}
\lim_{t \to \infty} \frac{1}{t} \log \mathbb{E}\Big[e^{p X(t)}\Big] = h(p), \quad \forall p\in \mathbb{R}_{>0}.
\end{equation}
Then, we have 
\begin{equation}\label{eq:ldp1}
\lim_{t \to \infty} \frac{1}{t} \log \mathbb{P}\Big(X(t) \, \geq \, st\Big) = - \sup_{p > 0}\{ps - h(p)\}, \quad \forall s>\zeta. 
\end{equation}
\end{prop}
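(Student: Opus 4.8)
The plan is to prove the matching upper and lower bounds for $\frac{1}{t}\log\mathbb{P}(X(t)>st)$ separately; the statement is a one-sided Gärtner--Ellis theorem, and the fact that only the $p>0$ branch of \eqref{eq:temp1} is available shapes the argument. First I would record the convex-analytic facts. Since $h'$ is continuous and increasing, $h$ is $C^1$ and convex on $(0,\infty)$; set $I(s):=\sup_{p>0}\{ps-h(p)\}$. For $s>\zeta$ the map $p\mapsto ps-h(p)$ has derivative $s-h'(p)$, positive for $p<p^*(s):=(h')^{-1}(s)$ and negative for $p>p^*(s)$, so the supremum is attained uniquely at $p^*(s)\in(0,\infty)$ and $I(s)=p^*(s)\,s-h(p^*(s))$ is finite. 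In particular $h(p_0)=p_0s_0-I(s_0)$ whenever $s_0=h'(p_0)$, and by the envelope theorem $I$ is $C^1$ on $(\zeta,\infty)$ with $I'(s)=p^*(s)=(h')^{-1}(s)$ strictly increasing; hence $I$ is strictly convex and continuous, which is what will power the strict inequalities below.

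The upper bound is Chernoff's inequality: for every $p>0$, $\mathbb{P}(X(t)>st)\le e^{-pst}\,\mathbb{E}[e^{pX(t)}]$, so \eqref{eq:temp1} gives $\limsup_{t\to\infty}\frac{1}{t}\log\mathbb{P}(X(t)>st)\le -ps+h(p)$, and taking the infimum over $p>0$ yields $\limsup_{t\to\infty}\frac{1}{t}\log\mathbb{P}(X(t)>st)\le -I(s)$.

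For the lower bound, fix $s>\zeta$ and $\eta\in(0,1)$. Using continuity of $h'$ and of $(h')^{-1}$, choose $p_2>p^*(s)$ close enough that $s_2:=h'(p_2)\in(s,s+\eta)$, then $s_3\in(s_2,s_2+\eta)$, and finally $p_3>p_2$ close enough that $h'(p_3)<s_3$. Decompose $\mathbb{E}[e^{p_2X(t)}]=A_t+B_t+C_t$ along the events $\{X(t)\le st\}$, $\{st<X(t)\le s_3t\}$, $\{X(t)>s_3t\}$. On $\{X(t)\le st\}$ one has $e^{p_2X(t)}\le e^{(p_2-p)st}e^{pX(t)}$ for any $p\in(0,p_2]$, so $\limsup_t\frac{1}{t}\log A_t\le\inf_{0<p\le p_2}\{(p_2-p)s+h(p)\}\le p_2s-I(s)$; and $I(s_2)-I(s)=\int_s^{s_2}I'(u)\,du<(s_2-s)I'(s_2)=(s_2-s)p_2$ rewrites as $p_2s-I(s)<p_2s_2-I(s_2)=h(p_2)$. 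On $\{X(t)>s_3t\}$ one has $e^{p_2X(t)}\le e^{(p_2-p_3)s_3t}e^{p_3X(t)}$ since $p_2<p_3$, so $\limsup_t\frac{1}{t}\log C_t\le(p_2-p_3)s_3+h(p_3)$; and $h(p_3)-h(p_2)=\int_{p_2}^{p_3}h'(q)\,dq<(p_3-p_2)h'(p_3)<(p_3-p_2)s_3$ rewrites as $(p_2-p_3)s_3+h(p_3)<h(p_2)$. Hence $A_t$ and $C_t$ are both eventually exponentially smaller than $\mathbb{E}[e^{p_2X(t)}]=A_t+B_t+C_t$, whose exponential rate is $h(p_2)$; so $B_t\ge\frac{1}{2}\mathbb{E}[e^{p_2X(t)}]$ for all large $t$ and $\liminf_t\frac{1}{t}\log B_t\ge h(p_2)$. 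Since $B_t\le e^{p_2s_3t}\,\mathbb{P}(st<X(t)\le s_3t)\le e^{p_2s_3t}\,\mathbb{P}(X(t)>st)$, this gives $\liminf_t\frac{1}{t}\log\mathbb{P}(X(t)>st)\ge h(p_2)-p_2s_3=-I(s_2)-p_2(s_3-s_2)\ge -I(s_2)-p_2\eta$. Letting $\eta\downarrow0$ forces $s_2\to s$, $p_2\to p^*(s)$ and $I(s_2)\to I(s)$ by continuity, so $\liminf_t\frac{1}{t}\log\mathbb{P}(X(t)>st)\ge -I(s)$, matching the upper bound and proving \eqref{eq:ldp1}.

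The main obstacle is the lower bound. Because \eqref{eq:temp1} is assumed only for $p>0$, the textbook exponential-tilting proof of the Gärtner--Ellis lower bound --- which tilts by $e^{p^*(s)X(t)}$ and needs control of the tilted cumulant generating function in a two-sided neighborhood of $p^*(s)$, possibly at $p\le0$ --- is unavailable. The one-sided truncation above circumvents this, and its delicate point is exactly that \emph{both} the ``too small'' contribution $A_t$ and the ``too large'' contribution $C_t$ must be shown exponentially negligible against the full exponential moment $\mathbb{E}[e^{p_2X(t)}]$; this is precisely where strict monotonicity of $h'$ (equivalently, strict convexity of $h$ and of $I$) together with the nested choices $p^*(s)<p_2<p_3$ and $s<s_2<s_3$ of the auxiliary parameters are needed.
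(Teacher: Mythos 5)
Your proof is correct, and despite your closing remark it is in substance the same exponential-tilting argument the paper uses: your decomposition $\mathbb{E}[e^{p_2X(t)}]=A_t+B_t+C_t$ with $p_2=(h')^{-1}(s_2)$ taken slightly to the right of $p^*(s)$ is exactly what the paper's tilted measure $\widetilde{\mathbb{P}}_{t,s}$ (built from $\mathfrak{q}_\epsilon:=(h')^{-1}(s+\epsilon)$) becomes once its Markov-inequality concentration estimates are unwrapped, and your strict-convexity integral bounds play the same role as the paper's derivative test $H'(0)=-\epsilon<0$. Your worry that the paper's tilting would need a two-sided cumulant generating function is unfounded — the paper, like you, tilts strictly past the optimizer and uses only small positive Markov shifts, so every evaluation of $h$ stays at positive arguments.
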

We believe that this proposition should be contained in some previous works since it bears a lot of similarities with the Gartner-Ellis theorem (see \cite[Section~2.3]{DZ10}). We provide a proof in Section~\ref{sec:Aux1} since we are unable to find the exact statement in the literature. 
Allying Proposition~\ref{prop:intermittldp} with \eqref{eq:intermittency} yields the proof of \eqref{eq:ldp}. It is worthwhile to note that the use of Proposition~\ref{prop:intermittldp} necessitates $g(\cdot)$ (in \eqref{eq:Gp}) to be a convex function and to satisfy few other technical properties. Under the assumption that $\big(g,\{f_t\}_{t\geq 0}\big)$ belong to a class $\mathbf{Hyp}$, the following lemma shows that $g(\cdot)$ indeed satisfies those properties. The proof of this lemma is deferred to Section~\ref{sec:Aux2}.

\begin{lemma}\label{lem:condition}
For any set of functions $\big(g,\{f_t\}_{t\geq 0}\big)$ in the class $\mathbf{Hyp}$, we have the following: 
\begin{enumerate}[(i)]
\item $g$ is convex and non-negative.
\item For every $p > 0$ and $\gamma>0$, define
\begin{equation}
\mathrm{MAX}^{f}_{p,\gamma}(t):=\Big\{x: -\frac{p x^2}{2t} + \log M^{f_t}_p(t,x)\geq \sup_{y\in \mathbb{R}}\big\{-\frac{p y^2}{2t} + \log M^{f_t}_p(t,y)\big\}-\gamma\Big\}.\label{eq:MAXDef}
\end{equation}
Then there exists $T_0 > 0$ such that for $t > T_0$, $\mathrm{MAX}^{f}_{p,\gamma}(t)$ is nonempty for all $p,\gamma>0$. Define $x_{p,\gamma}(t):= \argmax\big\{|x|:x\in \mathrm{MAX}^{f}_{p,\gamma}(t)\big\}$. There exists a constant $C=C(p,\gamma)>0$ such that for all $t > T_0$, $|x_{q,\gamma}(t)| \leq C t$ for all $\frac{p}{2} < q < 2p$. 
\end{enumerate}

\end{lemma}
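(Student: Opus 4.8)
The plan is to treat the two parts separately, deriving part~(i) from convexity (H\"older/Jensen) inequalities together with the lower bound condition, and part~(ii) from the growth and lower bound conditions combined with a power-mean comparison that upgrades the moment-dependent constants of Definition~\ref{Def:HypDef}(2) to estimates uniform over $q\in(p/2,2p)$. Throughout, write $\phi^{t}_{p}(x):=-\frac{p x^{2}}{2t}+\log M^{f_t}_{p}(t,x)$ and $\Lambda_{t}(p):=\sup_{x\in\mathbb{R}}\phi^{t}_{p}(x)$, so that $g(p)=\lim_{t\to\infty}t^{-1}\Lambda_{t}(p)$ by \eqref{eq:Gp}.

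For part~(i), non-negativity is immediate from \eqref{eq:lower bound condition}: for each $t$ there is $x^{\ast}\in[-L,L]$ with $\log M^{f_t}_{p}(t,x^{\ast})\ge-K$ (up to an arbitrarily small error), hence $\Lambda_{t}(p)\ge-\frac{pL^{2}}{2t}-K$, and dividing by $t$ and letting $t\to\infty$ gives $g(p)\ge0$. For convexity, fix $p=\lambda p_{1}+(1-\lambda)p_{2}$ with $\lambda\in(0,1)$; in the random case H\"older's inequality with exponents $1/\lambda$ and $1/(1-\lambda)$ gives $M^{f_t}_{p}(t,x)\le\big(M^{f_t}_{p_{1}}(t,x)\big)^{\lambda}\big(M^{f_t}_{p_{2}}(t,x)\big)^{1-\lambda}$, and in the deterministic case this is an equality; combining with the exact splitting $\frac{p x^{2}}{2t}=\lambda\frac{p_{1}x^{2}}{2t}+(1-\lambda)\frac{p_{2}x^{2}}{2t}$ yields $\phi^{t}_{p}(x)\le\lambda\phi^{t}_{p_{1}}(x)+(1-\lambda)\phi^{t}_{p_{2}}(x)\le\lambda\Lambda_{t}(p_{1})+(1-\lambda)\Lambda_{t}(p_{2})$, and taking the supremum over $x$, dividing by $t$ and letting $t\to\infty$ gives convexity of $g$.

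For part~(ii) I would first record two estimates uniform in $q\in(p/2,2p)$, obtained by transporting Definition~\ref{Def:HypDef}(2) through the Jensen (power-mean) inequalities $M^{f_t}_{q}\le\big(M^{f_t}_{2p}\big)^{q/(2p)}$ (valid since $q/(2p)\le1$, an equality in the deterministic case) and $M^{f_t}_{q}\ge\big(M^{f_t}_{p/2}\big)^{2q/p}$ (valid since $2q/p\ge1$). Applying the growth condition \eqref{eq:growth condition} at $2p$ and taking logarithms gives constants $C>0$ and $\alpha\in(0,1)$ depending only on $p$ with
\[
\phi^{t}_{q}(x)\ \le\ -\frac{q x^{2}}{2t}+\frac{q}{2p}\Big(\log(2C)+\max\big(C|x|,\tfrac{\alpha p x^{2}}{t}\big)\Big);
\]
in particular $\Lambda_{t}(q)<\infty$, so $\mathrm{MAX}^{f}_{q,\omega}(t)\neq\emptyset$, and for $|x|\ge Ct/(\alpha p)$ one obtains $\phi^{t}_{q}(x)\le-\frac{q(1-\alpha)x^{2}}{2t}+\frac{q}{2p}\log(2C)$, which decays like $-c\,x^{2}/t$ because $\alpha<1$. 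Conversely, the lower bound condition \eqref{eq:lower bound condition} at $p/2$ produces $x_{0}\in[-L_{0},L_{0}]$ with $M^{f_t}_{p/2}(t,x_{0})\ge e^{-K_{0}}$, hence $M^{f_t}_{q}(t,x_{0})\ge e^{-2qK_{0}/p}\ge e^{-4K_{0}}$ for $q<2p$, and therefore $\Lambda_{t}(q)\ge\phi^{t}_{q}(x_{0})\ge-\frac{pL_{0}^{2}}{t}-4K_{0}>-\infty$.

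Finally I would combine the two: for $x\in\mathrm{MAX}^{f}_{q,\omega}(t)$ we have $\phi^{t}_{q}(x)\ge\Lambda_{t}(q)-\omega\ge-\frac{pL_{0}^{2}}{t}-4K_{0}-\omega$, so if in addition $|x|\ge Ct/(\alpha p)$ the decaying upper bound forces $\frac{q(1-\alpha)x^{2}}{2t}$ to be at most a constant depending on $p,\omega$ plus $pL_{0}^{2}/t$; using $q>p/2$ and $t\ge1$ this yields $x^{2}\le C_{1}(p,\omega)\,t$. Hence every element of $\mathrm{MAX}^{f}_{q,\omega}(t)$ satisfies $|x|\le C(p,\omega)\,t$ with $C(p,\omega):=\max\{C/(\alpha p),\sqrt{C_{1}(p,\omega)}\}$ and $T_{0}:=1$, which in particular bounds $x_{q,\omega}(t)$, uniformly over $p/2<q<2p$. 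The main obstacle is precisely this uniformity in $q$: the constants in Definition~\ref{Def:HypDef}(2) are permitted to depend on the moment parameter, so they cannot be invoked along a whole interval, and the power-mean comparison to the fixed endpoints $p/2$ and $2p$ is what repairs this. The remaining points --- that all H\"older/Jensen steps become equalities in the deterministic case, and that the bound holds for every point of $\mathrm{MAX}^{f}_{q,\omega}(t)$ so that no attainment question in the definition of $x_{q,\omega}(t)$ arises --- are routine.
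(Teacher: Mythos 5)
Your proposal is correct and follows essentially the same route as the paper: nonnegativity from the lower bound condition \eqref{eq:lower bound condition}; convexity from convexity of $p\mapsto\log M^{f_t}_p(t,x)$ (you use H\"older directly, the paper phrases it via the second derivative, but this is the same fact); and for part (ii) the same power-mean bracket $\bigl(M^{f_t}_{p/2}\bigr)^{2q/p}\le M^{f_t}_q\le\bigl(M^{f_t}_{2p}\bigr)^{q/(2p)}$ transporting the growth condition at $2p$ and the lower bound condition at $p/2$ to a bound uniform in $q\in(p/2,2p)$, which is exactly the paper's inequality \eqref{eq:temp3}. The only difference is organizational: the paper phrases part (ii) as a proof by contradiction while you run the direct estimate, which is an equivalent and arguably cleaner presentation of the same idea.
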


\begin{remark}
The class $\mathbf{Hyp}$ in Theorem~\ref{thm:main} contains a large collection of interesting initial profiles for the KPZ equation. It is only bounded deterministic initial data and the delta initial data of the SHE for which all integer moment Lyapunov exponents were known (see \cite{BC95, BC15, ChenX15, CG18b}) before. We would like to stress that the \emph{narrow wedge initial data} of the KPZ equation which corresponds to taking $\mathcal{Z}(0,x)= \delta_{x=0}$ (i.e., the delta initial data of the SHE) is not covered by Theorem~\ref{thm:main}. However, the real positive moment Lyapunov exponents in the narrow wedge case are recently found in \cite{DT19} and those are one of the key inputs to our proof of Theorem~\ref{thm:main}.    
\end{remark}

\subsection{Previous works}
Our main result on the Lyapunov exponent of the SHE and the upper tail large deviation of the KPZ equation fits into the broader endeavor of studying the intermittency phenomenon and large deviation problems of the random field solution of stochastic partial differential equations. Intermittency, an universal phenomenon for random fields of multiplicative type is characterized by enormous moment growth rate of the random field. The nature of the intermittency is captured through the Lyapunov exponents. In last few decades, there were extensive amount of works on studying the growth rate of Lyapunov exponents  under variation in structure of the noise \cite{GM90,CM94,BC95, FK09,CJKS, HHNT, CD15,BC16} and the partial differential operators \cite{Ch17,CHN19}. Large deviation of the stochastic partial differential equations \cite{HW15, CD19} is an active area of research in recent years. Upper and lower tail large deviation of the KPZ equation behold special interests in theoretical as well as in experimental side and have been recently investigated in a vast amount of works. For detailed history along this line of works, we refer to \cite{LMRS16,HLMRS18,CGKLT,T18,DT19,KL19} and the references therein. Below, we compare our results with few of those previous works.

Based on the replica Bethe ansatz techniques, Kardar \cite[Section 2.2]{Kardar87} predicted the integer moment Lyapunov exponents of the fundamental solution of the SHE. Bertini and Cancrini \cite[Section~2.4]{BC95} made a rigorous attempt to show the exact match between the integer moment Lyapunov exponents of the SHE under constant initial data and Kardar's prediction. Unfortunately,  the computation of \cite{BC95} was incorrect beyond the second moment Lyapunov exponent. This was later fixed by \cite{ChenX15} who computed all integer moments Lyapunov exponents for any deterministic bounded positive initial data of the SHE. The main tool of \cite{ChenX15} was the  moment formulas of the SHE in terms of integral of local time of Brownian bridges derived from the Feynman-Kac representation of the solution.

Alternatively, the integer moments of the fundamental solution of the SHE which are widely believed to be same as the solution of the attractive delta-Bose gas have formulas in terms of contour integrals. We refer to \cite{G18} and the reference therein for a comprehensive discussion on this. Similar formulas are known for the moments of the parabolic Anderson model, semi-discrete directed polymers, $q$-Whittaker process (see \cite{BC15,BC14}) etc. By analyzing the contour integrals, \cite{CG18b} derived a sharp upper and lower bound to the integer moments of the fundamental solution of SHE which positively confirms Kardar's prediction. Recently, \cite{DT19} were able to obtain similar tight upper and lower bound to the fractional moments. Using sharp bounds on the moments, \cite{DT19} computed any positive fractional moment Lyapunov exponent of the fundamental solution. As an application of their result, \cite{DT19} also found the one point upper tail large deviation of the narrow wedge solution of the KPZ equation. We refer to \cite{HHNT,CHKN} for tight bounds on the moments of the SHE when the noise is colored in space/time and the initial data is a continuous bounded function.

In spite of a substantial amount of works on the fractional moments Lyapunov exponents of the SHE with colored noise, the case of general initial data for the SHE with white noise was largely being untouched. The same conclusion applies to the status of the upper tail large deviation result for the KPZ equation started from general initial data. However, tight bounds on the upper tail probabilities of the KPZ are available. For instance, \cite{CG18b} obtained the following result: for any $t_0>0$, there exists $s_0=s_0(t_0)$, $c_1=c_1(t_0), c_2=c_2(t_0)>0$ such that for all $s>s_0$ and $t>t_0$,
\[
e^{-c_1s^{3/2}}\leq \mathbb{P}\big(\mathcal{H}(t,0)+\frac{t}{24}\geq st^{1/3}\big)\leq e^{-c_2s^{3/2}},\] 
where the initial data of the KPZ solution $\mathcal{H}$ belongs to a large class of functions including the narrow wedge and the stationary initial data. We refer to Section~ of \cite{CG18b} and the references therein for more information. In the physics literature, the upper tail large deviation of the KPZ equation has been studied recently using optimal fluctuation theory which corresponds to Freidlin-Wentzell type large deviation theory of stochastic PDEs with small noise. By formal computations, \cite{MKV16, JKM16, MS17} (see also \cite{LMRS16,LMS16}) demonstrated the upper tail LDP of the KPZ started from a large class of initial data including the flat and stationary data. Theorem~\ref{cor:detinitial} and~\ref{cor:bminitial} rigorously confirms those results from physics literature. In a way, Theorem~\ref{thm:main} is the first result which  provides a concrete pathway to compute the Lyapunov exponent of the SHE started from general initial data and the upper tail large deviation rate function of the associated Cole-Hopf solution of the KPZ equation.

The probability of the KPZ equation being smaller than its typical value is captured through its lower tail probability. Like the upper tail, one point lower tail probabilities of the KPZ equation are equally important. The first tight estimates of the lower tail probabilities of the narrow wedge solution is obtained in \cite{CG18a} and the lower tail large deviation is rigorously proved in \cite{T18,CC19} (see also \cite{CGKLT,KL19} and the reference therein). The case of general initial data was considered in \cite{CG18b} where the authors provided an upper bound to the lower tail probability of the KPZ equation.  However, there are only very few things known about the lower tail large deviation under general initial data. 
In the physics literature, recently \cite{LD19} found a connection between the latter and the Kadomtsev-Petviashvili (KP) equation. It is unclear to us how much of the techniques of the present paper will help to get the lower tail large deviation of the KPZ equation started from general initial data.

Recently, there is a growing interest in studying the KPZ equation on the half-line. The large deviations of the KPZ equation on the half-line was studied in \cite{krajenbrink2018large, meerson2018large, T18, lin2020lyapunov}. For the half-line SHE with Robin boundary condition, 
under the narrow wedge initial data, 
\cite{lin2020lyapunov} computes the real positive Lyapunov exponents. As a consequence, the author obtains the upper tail large deviations of the half-line KPZ equation. It is an interesting question to see whether our method allows to obtain the Lyapunov exponents the half-line SHE with Robin boundary condition and general initial condition. One stumbling block could be that the convolution formula in Proposition \ref{prop:convolution} may not hold in the half-line situation, since the time inversion property in the proof of Proposition \ref{prop:convolution} (see Section \ref{sec:Aux3}) fails due to  the half-line boundary.
\subsection*{Outline} Section~\ref{sec:MainTheorem} will prove the Theorem~\ref{thm:main}. Applying Theorem~\ref{thm:main}, Theorem~\ref{cor:detinitial} and~\ref{cor:bminitial} will be shown in Subsections~\ref{sec:cordet} and \ref{sec:corbm} of Section~\ref{sec:Cor}. Proofs of Proposition~\ref{prop:intermittldp}, Lemma~\ref{lem:condition} and Proposition \ref{prop:convolution} are given in Subsections~\ref{sec:Aux1}, \ref{sec:Aux2} and \ref{sec:Aux3} of Section~\ref{sec:Aux}. 







\smallskip

\noindent \emph{Acknowledgment:} PG and YL would like to thank Ivan Corwin, Sayan Das, Shalin Parekh for helpful conversations, and three anonymous referees for helpful comments. YL was partially supported by the Fernholz Foundation's ``Summer Minerva Fellow" program and also received summer support from Ivan Corwin's NSF grant DMS-1811143, DMS-1664650.

\section{Lyapunov exponents and large deviation: Proof of Theorem~\ref{thm:main}}\label{sec:MainTheorem}
The main goal of this section is to prove Theorem~\ref{thm:main}. The part (a) of Theorem~\ref{thm:main} is to compute the Lyapunov exponents $\lim_{t\to \infty}t^{-1} \log\mathbb{E}[\mathcal{Z}^{f_t}(t,0)^p]$ for all $p\in \mathbb{R}_{>0}$. The part (b) involves showing the upper tail large deviation rate function of the KPZ equation. Both of these two results are proved for general initial data. The part (b) is a straightforward consequence of part (a). This is shown in Section~\ref{sec:ProofLDP} using Proposition~\ref{prop:intermittldp}. We prove part (a) as follows.

Note that \eqref{eq:intermittency} follows once we show for all $p\in \mathbb{R}_{>0}$,
\begin{align}\label{eq:Limsupinf}
 \underbrace{\frac{p^3 - p}{24} + g(p)\leq \liminf_{t\to \infty}t^{-1} \log\mathbb{E}[\mathcal{Z}^{f_t}(t,0)^p]}_{\mathsf{LimInf}_p}\leq \underbrace{\limsup_{t\to \infty}t^{-1} \log\mathbb{E}[\mathcal{Z}^{f_t}(t,0)^p]\leq \frac{p^3 - p}{24} + g(p)}_{\mathsf{LimSup}_p}.
\end{align}
We denote the left and right inequality by $\mathsf{LimInf}_p$ and $\mathsf{LimSup}_p$ and the proof of these inequalities will be shown in Section~\ref{sec:Upper bound} and~\ref{sec:Lower bound} respectively.


%

\subsection{Proof of $\mathsf{LimSup}_p$ for all $p\in \mathbb{R}_{>0}$}\label{sec:Upper bound} 

We divide the proof in two stages. In \emph{Stage 1}, we prove $\mathsf{LimSup}_p$ inequality when $p >1$ and \emph{Stage 2} will cover the case when $p\in (0,1]$.
\smallskip

\subsubsection{\emph{Stage 1:}} There are two main steps in the proof of this stage. The first step is to obtain the following upper bound 
\begin{align}\label{eq:LimSup1st}
\mathbb{E}\Big[\mathcal{Z}^{f_t}(t, 0)^p\Big]\leq  \Big(\frac{2\pi t}{\epsilon q}\Big)^{\frac{p}{2q}} \mathbb{E}\Big[\int_{-\infty}^{\infty} e^{\frac{\epsilon p x^2}{2t}}\mathcal{Z}^{\mathbf{nw}} (t, x)^p  e^{p f_t (x)} dx\Big]
\end{align}
for $q = \frac{p}{p-1}$ and arbitrary $\e > 0$. The method is to apply H\"older's inequality in the convolution formula of Proposition~\ref{prop:convolution}. The second step is to bound the expectation of the right hand side of the above display. For this, we first distribute the expectation over $\mathcal{Z}^{\mathbf{nw}} (t, x)^p$ and $e^{p f_t (x)}$ as $x$ varies in $\mathbb{R}$. The computation of the expectation of $\mathcal{Z}^{\mathbf{nw}} (t, x)^p$ for $x\in \mathbb{R}$ will be carried out using the spatial stationarity of $\mathcal{Z}^{\mathbf{nw}} (t, x)$ from Proposition~\ref{prop:stationarity} and the narrow wedge LDP from Proposition~\ref{prop:DT19thm}. For the upper bound on the part involving $e^{p f_t (x)}$, we use the property \eqref{eq:condition 3}. Below, we give details of each step.

By the convolution formula of Proposition~\ref{prop:convolution}, $\mathbb{E}[(\mathcal{Z}^{f_t} (t, 0))^p]$ is equal to $\mathbb{E}[(\int_{-\infty}^{\infty} \mathcal{Z}^{\mathbf{nw}} (t, x) e^{f_t (x)} dx)^p]$. In what follows, we bound $\int_{-\infty}^{\infty} \mathcal{Z}^{\mathbf{nw}} (t, x) e^{f_t (x)} dx$ in order to show \eqref{eq:LimSup1st}. 
Denote by $q = \frac{p}{p-1}$. We write $ \mathcal{Z}^{\mathbf{nw}} (t, x) e^{f_t (x)}$ as a product of $ e^{-\frac{\epsilon x^2}{2t}} $ and $e^{\frac{\epsilon x^2}{2t}} \mathcal{Z}^{\mathbf{nw}} (t, x) e^{f_t (x)}$. By applying H\"{o}lder's inequality
\begin{align*}
\int_{-\infty}^{\infty} \mathcal{Z}^{\mathbf{nw}} (t, x) e^{f_t (x)} dx & \leq \Big(\int_{-\infty}^{\infty} e^{-\frac{\epsilon q x^2}{2t}} dx\Big)^{\frac{1}{q}} \Big( \int_{-\infty}^{\infty} e^{\frac{\epsilon p x^2}{2t}}\mathcal{Z}^{\mathbf{nw}} (t, x)^p e^{p f_t (x)} dx \Big)^{\frac{1}{p}},
\end{align*}
The last inequality in conjunction with the fact that $\int_{-\infty}^{\infty} e^{-\frac{\epsilon q x^2}{2t}} dx$ is equal to $\sqrt{2\pi t/\epsilon q}$ yields 
\begin{align*}
\label{eq:temp5} \mathbb{E}\bigg[\Big(\int_{-\infty}^{\infty} \mathcal{Z}^{\mathbf{nw}} (t, x) e^{f_t (x)} dx\Big)^p\bigg]\leq  \Big(\frac{2\pi t}{\epsilon q}\Big)^{\frac{p}{2q}} \mathbb{E}\Big[\int_{-\infty}^{\infty} e^{\frac{\epsilon p x^2}{2t}}\mathcal{Z}^{\mathbf{nw}} (t, x)^p  e^{p f_t (x)} dx\Big].
\end{align*}
Note that the above inequality shows the upper bound in \eqref{eq:LimSup1st}. We apply Fubini's theorem to interchange the expectation and the integral in the above display. Using the stationarity of $\mathcal{Z}^{\mathbf{nw}} (t, x)e^{x^2/2t}$ (see Proposition \ref{prop:stationarity}), one can write the expectation in the right hand side of the above display as the product of $\mathbb{E}\big[\mathcal{Z}^{\mathbf{nw}}(t, 0)^p\big] $ and $\int_{-\infty}^{\infty} e^{\frac{-(1-\epsilon) p x^2}{2t}} M^{f_t}_p(t,x) dx$ where $M^{f_t}_p(t,x)$ is defined in the coherence conditions (see Definition~\ref{Def:HypDef}) for the KPZ data $(g,\{f_t\}_{t>0})$ . Taking logarithm on both sides of the inequality, dividing by $t$ and letting $t\to \infty$, $\epsilon\to 0$ shows 
\begin{align*}
\limsup_{t\to \infty}\frac{1}{t}\log \mathbb{E}\Big[(\mathcal{Z}^{f_t}(t, 0))^p\Big]\leq \frac{p^3-p}{24} + \liminf_{\epsilon \to 0}\limsup_{t \to \infty} \frac{1}{t} \log\Big(\int_{-\infty}^{\infty}  e^{-\frac{(1-\epsilon) p x^2}{2t}} M^{f_t}_p(t,x) dx\Big),
\end{align*}
where the factor $(p^3-p)/24$ in the right hand side is obtained by applying Proposition \ref{prop:DT19thm}.
To get the desired upper bound in $\mathsf{LimSup}_p$, it suffices to show that 
\begin{equation}\label{eq:temp6}
\liminf_{\epsilon \to 0}\limsup_{t \to \infty} \frac{1}{t} \log\Big(\int_{-\infty}^{\infty}  e^{-\frac{(1-\epsilon) p x^2}{2t}} M^{f_t}_p(t,x) dx\Big) \leq g(p).
\end{equation}
 For showing \eqref{eq:temp6},  
we use the property \eqref{eq:condition 3} of the KPZ data $\big(g,\{f_t\}_{t\geq 0}\big)$. By H\"{o}lder's inequality, $\mathbb{E}\big[e^{p f_t (x)}\big]$ is bounded above by $(\mathbb{E}[e^{p(1 +\epsilon) f_t (x)}])^{1/(1+\epsilon)}$ which we can bound by $1 + \mathbb{E}[e^{p(1+\epsilon) f_t (x)}]$. Applying this upper bound into the left hand side of \eqref{eq:temp6},  
\begin{align*}
\text{l.h.s. of \eqref{eq:temp6}}
&\leq \liminf_{\epsilon \to 0}\limsup_{t \to \infty} \frac{1}{t} \log\Big(\int_{-\infty}^{\infty}  e^{-\frac{(1-\epsilon) p x^2}{2t}} \big(1 + M^{f_t}_{p(1+\epsilon)}(t,x)\big) dx\Big)\\
&\leq \liminf_{\epsilon \to 0} \limsup_{t \to \infty} \frac{1}{t} \log \Big( \sqrt{\frac{2\pi t}{(1-\epsilon) p}} + \int_{-\infty}^{\infty} e^{-\frac{(1-\epsilon) p x^2}{2t}} M^{f_t}_{p(1+\epsilon)}(t,x) dx\Big)\\
&\leq g(p).
\end{align*}
We have used the property $g(p) \geq 0$ from Lemma \ref{lem:condition} (i) and \eqref{eq:condition 3} in the last line. This completes the proof when $p>1$.  
\smallskip 

\subsubsection{\emph{Stage 2:}}
The derivation of $\mathsf{LimSup}_p$ for $p\in (0,1]$  depends on Proposition~\ref{prop:NWcontrol} and~\ref{prop:InitControl}. We first state these propositions. We will use the propositions to prove $\mathsf{LimSup}_p$ for $p\in (0,1]$ and then prove them. 
\begin{prop}\label{prop:NWcontrol}
Fix any $\nu\in (0,1)$. For any $u,v\in \mathbb{R}$, define a random function $S_{[u,v]}:(0,\infty)\to \mathbb{R}$ as 
\begin{equation}\label{eq:limsup<1 1}
S_{[u,v]} (t) := \sup_{x \in [u, v]} \Big(\mathcal{H}^{\mathbf{nw}} (t, x) - \mathcal{H}^{\mathbf{nw}}(t, u) + \frac{(x-u) u }{t} -  \frac{\nu (x - u)^2}{2}\Big).
\end{equation} 
Let $\{\theta_{n}\}_{n\in \mathbb{Z}}$ be a sequence of grid points such that the sequence  $\{f_t\}_{t>0}$ satisfies \eqref{eq:ModCont}.   
Then, we have the following: 
\begin{enumerate}
\item[(i)] For all $n\in \mathbb{Z}$,
\begin{equation}\label{eq:DistId}
\Big(\mathcal{H}^{\mathbf{nw}}(t, \theta_n) + \frac{\theta_n^2}{2t}, S_{[\theta_n,\theta_{n+1}]} (t)\Big) \overset{d}{=}  \Big(\mathcal{H}^{\mathbf{nw}}(t, 0), S_{[0,\theta_{n+1}-\theta_{n}]} (t)\Big).
\end{equation}
\item[(ii)] For all $p\in \mathbb{R}_{>0}$,
\begin{align}\label{eq:limsup>1 2}
&\limsup_{t \to \infty} \frac{1}{t} \log \mathbb{E}\Big[\mathcal{Z}^{\mathbf{nw}}(t, 0)^p e^{p S_{[0,1]} (t)}\Big] \leq \frac{p^3 - p}{24}.
\end{align}
\end{enumerate}
\end{prop}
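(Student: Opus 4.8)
The plan is to reduce everything to the stationary field $\mathcal{G}(t,x):=\mathcal{H}^{\mathbf{nw}}(t,x)+\frac{x^2}{2t}$, which by Proposition~\ref{prop:stationarity} has a law — as a process in $x$, for each fixed $t$ — that is invariant under spatial shifts. The first step is an algebraic rewriting of $S_{[u,v]}$: substituting $\mathcal{H}^{\mathbf{nw}}(t,x)=\mathcal{G}(t,x)-\frac{x^2}{2t}$ into \eqref{eq:limsup<1 1}, the quadratic terms collapse, since $-\frac{x^2-u^2}{2t}+\frac{(x-u)u}{t}=-\frac{(x-u)^2}{2t}$, so that $S_{[u,v]}(t)=\sup_{w\in[0,v-u]}\bigl(\mathcal{G}(t,u+w)-\mathcal{G}(t,u)-(\tfrac{1}{2t}+\tfrac{\nu}{2})w^2\bigr)$. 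Thus the pair $\bigl(\mathcal{G}(t,u),S_{[u,v]}(t)\bigr)$ is a deterministic functional of $\mathcal{G}(t,u)$ together with the shifted process $\bigl(\mathcal{G}(t,u+w)\bigr)_{w\ge 0}$; applying shift-stationarity with $u=\theta_n$ and $v=\theta_{n+1}$ identifies its law with that of $\bigl(\mathcal{G}(t,0),S_{[0,\theta_{n+1}-\theta_n]}(t)\bigr)$, and since $\mathcal{G}(t,\theta_n)=\mathcal{H}^{\mathbf{nw}}(t,\theta_n)+\frac{\theta_n^2}{2t}$ and $\mathcal{G}(t,0)=\mathcal{H}^{\mathbf{nw}}(t,0)$, this is exactly \eqref{eq:DistId}. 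This settles part (i).

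For part (ii), I would first observe that with $u=0$ one has $\mathcal{Z}^{\mathbf{nw}}(t,0)^p e^{pS_{[0,1]}(t)}=\exp\!\bigl(p\sup_{x\in[0,1]}(\mathcal{H}^{\mathbf{nw}}(t,x)-\tfrac{\nu x^2}{2})\bigr)$, and split the supremum as $\mathcal{H}^{\mathbf{nw}}(t,0)$ plus the increment term $R(t):=\sup_{x\in[0,1]}\bigl(\mathcal{H}^{\mathbf{nw}}(t,x)-\mathcal{H}^{\mathbf{nw}}(t,0)-\tfrac{\nu x^2}{2}\bigr)$. Fixing $a>1$ with conjugate exponent $b=\frac{a}{a-1}$, H\"older's inequality gives $\mathbb{E}\bigl[\mathcal{Z}^{\mathbf{nw}}(t,0)^p e^{pS_{[0,1]}(t)}\bigr]\le \mathbb{E}\bigl[\mathcal{Z}^{\mathbf{nw}}(t,0)^{ap}\bigr]^{1/a}\,\mathbb{E}\bigl[e^{bpR(t)}\bigr]^{1/b}$. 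By Proposition~\ref{prop:DT19thm}, $\frac{1}{t}\log\mathbb{E}[\mathcal{Z}^{\mathbf{nw}}(t,0)^{ap}]\to \frac{(ap)^3-ap}{24}$, whose $\tfrac1a$-th power contributes $\frac{a^2p^3-p}{24}$ to the exponential rate; it then remains to show the second factor does not contribute, i.e.\ $\limsup_{t}\tfrac1t\log\mathbb{E}[e^{bpR(t)}]\le 0$, after which letting $a\downarrow 1$ yields the claimed bound $\frac{p^3-p}{24}$.

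The second factor is handled by Proposition~\ref{prop: CGH 4.4}: for $t\ge t_0>1$ one has $[0,1]\subseteq[0,t^{1/3}]$, so \eqref{eq:increment upper tail}, applied with this $\nu$ and any $\epsilon\in(0,\tfrac18)$, yields $\mathbb{P}(R(t)\ge s)\le \exp(-cs^{9/8-\epsilon})$ for all $s\ge s_0$, with $c,s_0$ independent of $t$. Since $\tfrac98-\epsilon>1$, integrating this tail shows $\sup_{t\ge t_0}\mathbb{E}[e^{bpR(t)}]<\infty$, hence $\frac1t\log\mathbb{E}[e^{bpR(t)}]\to 0$, completing part (ii).

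I expect the work here to be mostly bookkeeping rather than conceptual. The one genuinely essential point is that Proposition~\ref{prop: CGH 4.4} provides \emph{super}-exponential ($\tfrac98-\epsilon>1$) decay of the increment supremum, uniformly over $t\ge t_0$; this is precisely what makes $\mathbb{E}[e^{bpR(t)}]$ finite and $t$-uniformly bounded, whereas merely exponential decay would break the argument. The minor care points are to invoke Proposition~\ref{prop:stationarity} at the level of the entire spatial process (not just one-point marginals) when deriving \eqref{eq:DistId}, and to note that the quadratic correction $(\tfrac{1}{2t}+\tfrac{\nu}{2})w^2$ appearing after the rewriting is itself independent of the shift parameter $u$, so it passes through the distributional identity unchanged.
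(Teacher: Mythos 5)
Your proof is correct and follows essentially the same route as the paper's: part (i) via rewriting in terms of the shift-stationary field $\mathcal{H}^{\mathbf{nw}}(t,x)+x^2/2t$ (Proposition~\ref{prop:stationarity}), and part (ii) via H\"older's inequality paired with Proposition~\ref{prop:DT19thm} and the uniform-in-$t$ finiteness of $\mathbb{E}[e^{cS_{[0,1]}(t)}]$, which rests on the super-exponential increment tail of Proposition~\ref{prop: CGH 4.4}. The only cosmetic difference is parameterization: you use conjugate exponents $a>1$, $b=a/(a-1)$ and let $a\downarrow 1$, while the paper sets $a=(p+\epsilon)/p$, $b=(p+\epsilon)/\epsilon$ and lets $\epsilon\to 0$; these are the same bound. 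Your emphasis that the exponent $9/8-\epsilon>1$ is the essential feature of the CGH estimate is exactly the point the paper's argument hinges on (it chooses $\delta=1/17$ for concreteness).
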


\begin{prop}\label{prop:InitControl}
For any $n\in \mathbb{Z}$, we define $\mathbf{E}^{(n)}_{t,p}:=\mathbb{E}[(\int^{\theta_{n+1}}_{\theta_{n}}e^{f_t (x)} dx)^p]$. Then,
\begin{align}\label{eq:limsup<1 2}
&\limsup_{t \to \infty} \frac{1}{t} \log \Bigg(\sum_{n \in \mathbb{Z}_{\geq 0}} e^{-\frac{p\theta^2_{n}}{2t}}\mathbf{E}^{(n)}_{t,p} +\sum_{n \in \mathbb{Z}_{<0}} e^{-\frac{p\theta^2_{n+1}}{2t}} \mathbf{E}^{(n)}_{t,p}\Bigg) \leq g(p).
\end{align}
\end{prop}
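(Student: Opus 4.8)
The plan is to reduce the left-hand side of \eqref{eq:limsup<1 2} to the quantity $t^{-1}\log\int_{\mathbb{R}} e^{-\frac{p(1-\epsilon)x^2}{2t}} M^{f_t}_{p(1+\epsilon)}(t,x)\,dx$, which is controlled by the coherence condition \eqref{eq:condition 3}, up to errors that vanish as $t\to\infty$ and $\epsilon\to 0$. The two sums in \eqref{eq:limsup<1 2} are symmetric (one over $n\ge 0$ with weight $e^{-p\theta_n^2/2t}$, one over $n<0$ with weight $e^{-p\theta_{n+1}^2/2t}$; in each case the weight uses the endpoint of $[\theta_n,\theta_{n+1}]$ nearest the origin), so it suffices to bound the $n\ge 0$ sum. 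First I would handle $\mathbf{E}^{(n)}_{t,p}=\mathbb{E}\big[\big(\int_{\theta_n}^{\theta_{n+1}} e^{f_t(x)}dx\big)^p\big]$. Writing $f_t(x)=f_t(\theta_n)+\big(f_t(x)-f_t(\theta_n)\big)$ and using the pseudo-stationarity modulus-of-continuity bound \eqref{eq:ModCont} (or the deterministic bound $s_0$), one gets $\int_{\theta_n}^{\theta_{n+1}} e^{f_t(x)}dx \le |\theta_{n+1}-\theta_n|\, e^{f_t(\theta_n)}\, e^{\Delta_n}$ where $\Delta_n:=\sup_{x\in[\theta_n,\theta_{n+1}]}|f_t(x)-f_t(\theta_n)|$ has a stretched-exponential tail uniformly in $n,t$. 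Since $|\theta_{n+1}-\theta_n|\le 1$, raising to the $p$-th power and taking expectations gives $\mathbf{E}^{(n)}_{t,p}\le C_p\, M^{f_t}_p(t,\theta_n)$ for a constant $C_p=\mathbb{E}[e^{p\Delta_n}]<\infty$ absorbing the tail of $\Delta_n$ (in the random case $f_t(\theta_n)$ and $\Delta_n$ need a further Hölder split, $\mathbf{E}^{(n)}_{t,p}\le (\mathbb{E}[e^{p(1+\epsilon')f_t(\theta_n)}])^{1/(1+\epsilon')}(\mathbb{E}[e^{p\Delta_n(1+\epsilon')/\epsilon'}])^{\epsilon'/(1+\epsilon')}$, which still yields $\mathbf{E}^{(n)}_{t,p}\le C_{p,\epsilon'}\,M^{f_t}_{p(1+\epsilon')}(t,\theta_n)^{1/(1+\epsilon')}$, and one then uses $M^{f_t}_{p(1+\epsilon')}(t,\theta_n)^{1/(1+\epsilon')}\le 1+M^{f_t}_{p(1+\epsilon')}(t,\theta_n)$ to linearize).

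Next I would convert the sum $\sum_{n\ge 0} e^{-p\theta_n^2/2t}M^{f_t}_{p(\cdot)}(t,\theta_n)$ into the integral $\int_{0}^{\infty} e^{-p(1-\epsilon)x^2/2t}M^{f_t}_{p(\cdot)}(t,x)\,dx$ by comparing each summand to the integral over $[\theta_n,\theta_{n+1}]$. For $x\in[\theta_n,\theta_{n+1}]$ with $n\ge 0$ we have $\theta_n\le x\le\theta_n+1$, so $e^{-p\theta_n^2/2t}\le e^{p(2x+1)/2t}e^{-px^2/2t}\le e^{p(2x+1)/2t}e^{-p(1-\epsilon)x^2/2t}$ — actually cleaner: $\theta_n^2\ge (x-1)^2\ge (1-\epsilon)x^2 - C_\epsilon$ for a constant $C_\epsilon$ (valid for all $x$, using $2x\le \epsilon x^2+\epsilon^{-1}$), so $e^{-p\theta_n^2/2t}\le e^{pC_\epsilon/2t}e^{-p(1-\epsilon)x^2/2t}$; and $M^{f_t}_{p(\cdot)}(t,\theta_n)$ is comparable to $M^{f_t}_{p(\cdot)}(t,x)$ up to the factor $e^{p(\cdot)\Delta_n}$ (same bound as above, with the sup over the cell) and up to the factor $|\theta_{n+1}-\theta_n|^{-1}\le(\max\{c|n|,1\})^{\beta}$ coming from replacing the summand by $|\theta_{n+1}-\theta_n|^{-1}\int_{\theta_n}^{\theta_{n+1}}(\cdot)dx$. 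The polynomial factor $(\max\{c|n|,1\})^{\beta}$ is harmless: since $\theta_n\le n$ grows and $e^{-p(1-\epsilon)x^2/2t}$ together with the growth bound \eqref{eq:growth condition} (which gives $M^{f_t}\le C(e^{C|x|}+e^{\alpha p x^2/2t})$ with $\alpha<1$) makes the integrand integrable with at most $e^{O(1/t)}$ loss, a polynomial-in-$n$ prefactor on a summable-tail series contributes a subexponential (in $t$) constant, hence $0$ to $t^{-1}\log$. Assembling, $\sum_{n\ge0}e^{-p\theta_n^2/2t}\mathbf{E}^{(n)}_{t,p}\le e^{o(t)}\big(\text{const}+\int_{\mathbb{R}}e^{-p(1-\epsilon)x^2/2t}M^{f_t}_{p(1+\epsilon')}(t,x)\,dx\big)$ for a free small $\epsilon,\epsilon'>0$; taking $t^{-1}\log$, then $\limsup_t$, then $\liminf_{\epsilon,\epsilon'\to 0}$, and invoking \eqref{eq:condition 3} together with $g(p)\ge 0$ from Lemma~\ref{lem:condition}(i) (to absorb the additive "const") yields the bound $g(p)$.

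The main obstacle I anticipate is the interplay between the random-data Hölder split and the grid-spacing factors: one must choose the auxiliary exponents $\epsilon'$ so that the stretched-exponential tail in \eqref{eq:ModCont} (with exponent $1+\delta>1$) dominates the linear exponential $e^{p\Delta_n(1+\epsilon')/\epsilon'}$ uniformly in $n$ and $t$, which it does since the tail bound is uniform, but the bookkeeping of constants that depend on $p,\epsilon,\epsilon'$ but not on $t$ or $n$ must be done carefully — in particular verifying that the number of ``bad'' cells near the origin is finite and that the growth condition \eqref{eq:growth condition} controls the tail of the $n$-sum so that the polynomial factor $(\max\{c|n|,1\})^\beta$ really is absorbed. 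A secondary point is justifying that $\int_{\mathbb{R}}e^{-p(1-\epsilon)x^2/2t}M^{f_t}_{p(1+\epsilon')}(t,x)\,dx$ with the slightly mismatched $\epsilon,\epsilon'$ is still bounded by the expression in \eqref{eq:condition 3}; this is immediate by monotonicity after renaming (choose $\epsilon'\le\epsilon$ and bound $M^{f_t}_{p(1+\epsilon')}\le 1+M^{f_t}_{p(1+\epsilon)}$), so no essential difficulty there.
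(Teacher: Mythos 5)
Your plan follows essentially the same route as the paper's proof: bound $\mathbf{E}^{(n)}_{t,p}$ by $M^{f_t}_{p(1+\epsilon)}$ at a grid point via the pseudo-stationarity modulus of continuity plus a H\"older split and linearization (so that the sub-unit power becomes $1+M^{f_t}_{p(1+\epsilon)}$), then pass from the discrete sum over grid cells to the Gaussian integral $\int e^{-p(1-\epsilon)x^2/2t}M^{f_t}_{p(1+\epsilon)}(t,x)\,dx$, absorbing the subexponential-in-$t$ prefactors and invoking \eqref{eq:condition 3} and $g(p)\geq 0$. The only stylistic difference is that you explicitly carry the Jacobian factor $|\theta_{n+1}-\theta_n|^{-1}\leq (\max\{c|n|,1\})^{\beta}$ (which you then correctly absorb into the Gaussian, using that $|\theta_n|\gtrsim |n|^{1-\beta}$), while the paper's Step 1/Step 2 organize the same discrete-to-continuum bookkeeping slightly differently.
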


\textsc{Proof of $\mathsf{LimSup}_{p}$ for $p\in (0,1]$:} Fix $p\in (0,1]$. We show that there exists $C=C(p)>0$ such that for all $t>0$,  
\begin{align}\label{eq:p<1Split}
\mathbb{E}\Big[\mathcal{Z}^{f_t} (t, 0)^p\Big]
\leq C \mathbb{E}\Big[\mathcal{Z}^{\mathbf{nw}}(t, 0)^p e^{p S_{[0,1]} (t)}\Big] \Bigg(\sum_{n \in \mathbb{Z}_{\geq 0}} e^{-\frac{p\theta^2_n}{2t}} \mathbf{E}^{(n)}_{t,p}+ \sum_{n \in \mathbb{Z}_{<0}} e^{-\frac{p\theta^2_{n+1}}{2t}} \mathbf{E}^{(n)}_{t,p}\Bigg).
\end{align}
From the above inequality, we first show how $\mathsf{LimSup}_p$ follows. By taking logarithms of both sides of the above inequality, dividing them by $t$ and letting $t\to \infty$, we get $\mathsf{LimSup}_p$ once the following inequalities are satisfied
$$\limsup_{t \to \infty} \frac{1}{t} \log \mathbb{E}\Big[\mathcal{Z}^{\mathbf{nw}}(t, 0)^p e^{p S_{[0,1]} (t)}\Big] \leq \frac{p^3 - p}{24}, \quad \limsup_{t \to \infty} \frac{1}{t} \log \Bigg(\sum_{n \in \mathbb{Z}_{\geq 0}} e^{-\frac{p\theta^2_n}{2t}}\mathbf{E}^{(n)}_{t,p} +\sum_{n \in \mathbb{Z}_{<0}} e^{-\frac{p\theta^2_{n+1}}{2t}} \mathbf{E}^{(n)}_{t,p}\Bigg) \leq g(p).$$
But, these two inequalities are given by \eqref{eq:limsup>1 2} and \eqref{eq:limsup<1 2} of Proposition~\ref{prop:NWcontrol} and~\ref{prop:InitControl} respectively. This completes the proof of $\mathsf{LimSup}_p$ when $p\in (0,1]$ modulo \eqref{eq:p<1Split} which we prove as follows.

By the convolutional formula of Proposition~\ref{prop:convolution}, it suffices to show \eqref{eq:p<1Split} with $\mathbb{E}[(\int \mathcal{Z}^{\mathbf{nw}}(t, x) e^{f_t (x)} dx)^p]$ in place of $\mathbb{E}[\mathcal{Z}^{f_t} (t, 0)^p]$. Owing to the subadditivity of function $g(x) = x^{p}$ for $x>0$ and $p\in (0,1]$, 
\begin{align}\label{eq:SubAdd}
 \mathbb{E}\Big[\big(\int \mathcal{Z}^{\mathbf{nw}}(t, x) e^{f_t (x)} dx\big)^p\Big]= \mathbb{E}\Big[\Big(\sum_{n \in \mathbb{Z}} \int_{\theta_{n}}^{\theta_{n+1}} \mathcal{Z}^{\mathbf{nw}}(t, x) e^{f_t (x)} dx\Big)^p\Big]\leq \sum_{n \in \mathbb{Z}} \mathbb{E}\Big[\Big(\int_{\theta_n}^{\theta_{n+1}} \mathcal{Z}^{\mathbf{nw}}(t, x) e^{f_t (x)} dx  \Big)^p\Big].
\end{align} 
Note that \eqref{eq:p<1Split} follows from the above inequality if there exists $C=C(p)>0$ such that 
\begin{align}
\mathbb{E}\Big[\Big(\int_{\theta_n}^{\theta_{n+1}} \mathcal{Z}^{\mathbf{nw}}(t, x) e^{f_t (x)} dx \Big)^p\Big]\leq C   \mathbb{E}\Big[\mathcal{Z}^{\mathbf{nw}}(t, 0)^p e^{p S_{[0,1]} (t)}\Big] \mathbf{E}^{(n)}_{t,p}\times \begin{cases}
e^{-\frac{p \theta^2_n}{2t}} & n\geq 0,\\
e^{-\frac{p\theta^2_{n+1}}{2t}} & n<0.
\end{cases}\label{eq:SplitStage2}
\end{align}
holds for all $n\in \mathbb{Z}$. We show this bound below.

We first show \eqref{eq:SplitStage2} for $n\geq 0$. Recall the definition of $S_n(t)$ from \eqref{eq:limsup<1 1}. Since $S_{[\theta_n,\theta_{n+1}]}(t)$ is greater than $\mathcal{H}^{\mathbf{nw}} (t, x) - \mathcal{H}^{\mathbf{nw}} (t, \theta_{n}) + (x-\theta_n) \theta_n/t - \nu(x-\theta_n)^2/2$ for any $x\in [\theta_n,\theta_{n+1}]$,  we may write 
\begin{align}
\mathcal{H}^{\mathbf{nw}}(t,x) \leq \mathcal{H}^{\mathbf{nw}}(t,\theta_n) + S_{[\theta_n,\theta_{n+1}]}(t) -\frac{(x-\theta_n)\theta_n}{t}+ \frac{\nu(x-\theta_n)^2}{2}.
\end{align}
Exponentiating both sides of the inequality yields 
\begin{equation}\label{eq:Expo}
\mathcal{Z}^{\mathbf{nw}}(t, x) 
\leq \mathcal{Z}^{\mathbf{nw}}(t, \theta_n) e^{S_{[\theta_n,\theta_{n+1}]} (t)} e^{\frac{\nu(x-\theta_n)^2}{2}} e^{-\frac{(x-\theta_n)\theta_{n}}{t}}\leq C\mathcal{Z}^{\mathbf{nw}}(t, \theta_n) e^{S_{[\theta_n,\theta_{n+1}]} (t)},
\end{equation}
where the last inequality follows since $\exp(2^{-1}\nu (x-\theta_n)^2-t^{-1}(x-\theta_{n})\theta_{n})$is upper bounded by a constant over $x \in [\theta_n, \theta_{n+1}]$ (recall that we set $|\theta_{n+1} - \theta_n| \leq 1$). Bounding $\mathcal{Z}^{\mathbf{nw}}(t, x)$ with $C\mathcal{Z}^{\mathbf{nw}}(t, \theta_n) e^{S_{[\theta_n,\theta_{n+1}]} (t)}$ yields
\begin{align*}
\Big(\int_{\theta_n}^{\theta_{n+1}} \mathcal{Z}^{\mathbf{nw}}(t, x) e^{f_t (x)} dx\Big)^p 
& \leq C \mathcal{Z}^{\mathbf{nw}}(t, \theta_{n})^p e^{p S_{[\theta_n,\theta_{n+1}]} (t)} \Big(\int_{\theta_n}^{\theta_{n+1}} e^{f_t (x)} dx\Big)^p.
\end{align*}
Taking the expectation for both sides in the above display and using the independence between $\mathcal{Z}^{\mathbf{nw}}(t,\cdot)$ and $f_t(\cdot)$ shows
\begin{align}
\mathbb{E}\Big[\Big(\int_{\theta_n}^{\theta_{n+1}} \mathcal{Z}^{\mathbf{nw}}(t, x) e^{f_t (x)} dx\Big)^p\Big] 
&\leq C\mathbb{E}\Big[\mathcal{Z}^{\mathbf{nw}}(t, \theta_n)^p e^{p S_{[\theta_n,\theta_{n+1}]} (t)}\Big]\mathbf{E}^{(n)}_{t,p}\nonumber\\&= C\mathbb{E}\Big[\big(\mathcal{Z}^{\mathbf{nw}}(t, \theta_n)e^{\frac{\theta^2_n}{2t}}\big)^p e^{p S_{[\theta_n,\theta_{n+1}]} (t)}\Big]\mathbf{E}^{(n)}_{t,p}e^{-\frac{p\theta^2_n}{2t}}\label{eq:npos}.
\end{align}
By \eqref{eq:DistId} of Proposition~\ref{prop:NWcontrol}, $(\mathcal{Z}^{\mathbf{nw}}(t, \theta_n)e^{\frac{\theta^2_n}{2t}}, S_{[\theta_n,\theta_{n+1}]} (t))$ is same in distribution with $(\mathcal{Z}^{\mathbf{nw}}(t, 0), S_{[0,\theta_{n+1}-\theta_n]} (t))$. Note that $e^{pS_{[0,\theta_{n+1}-\theta_n]}(t)}$ is bounded above by $e^{pS_{[0,1]}(t)}$ since $|\theta_{n+1}-\theta_{n}| \leq 1$. Thus, the right hand side of the above display is less than $C\mathbb{E}[\mathcal{Z}^{\mathbf{nw}}(t,0)^p e^{pS_{[0,1]}(t)}]\mathbf{E}^{(n)}_{t,p}e^{-\frac{p\theta^2_n}{2t}}$. This shows \eqref{eq:SplitStage2} for $n\geq 0$.  

We turn to prove \eqref{eq:SplitStage2} for $n<0$. The key part of the proof relies on the fact that the law of $\mathcal{Z}^{\mathbf{nw}}(t,\cdot)$ is invariant under the reflection w.r.t. $0$, i.e., $\{\mathcal{Z}^{\mathbf{nw}}(t,x):x\geq 0\}$ is same in distribution with $\{\mathcal{Z}^{\mathbf{nw}}(t,x):x\leq 0\}$.  By this reflection invariance of the law of $\mathcal{Z}^{\mathbf{nw}}(t,\cdot)$, it suffices to bound $\mathbb{E}[(\int_{-\theta_{n+1}}^{-\theta_n} \mathcal{Z}^{\mathbf{nw}}(t, x) e^{f_t (-x)} dx)^p]$ instead of $\mathbb{E}[(\int_{\theta_n}^{\theta_{n+1}} \mathcal{Z}^{\mathbf{nw}}(t, x) e^{f_t (x)} dx)^p]$. Note that $-\theta_{n+1}\geq 0$ for any $n\in \mathbb{Z}_{<0}$. By \eqref{eq:Expo}, we can bound $\mathcal{Z}^{\mathbf{nw}}(t, x)$ by $C\mathcal{Z}^{\mathbf{nw}}(t,-\theta_{n+1})e^{S_{[-\theta_{n+1}, -\theta_n]}(t)}$ for some constant $C=C(p,\nu)>0$ for any $x\in [-\theta_{n+1},-\theta_n]$. This allows us to write 
\begin{align}
\mathbb{E}\Big[ &\Big(\int_{-\theta_{n+1}}^{-\theta_n} \mathcal{Z}^{\mathbf{nw}}(t, x) e^{f_t (-x)} dx\Big)^p\Big]\nonumber\\&\leq C \mathbb{E}\Big[\big(\mathcal{Z}^{\mathbf{nw}}(t, -\theta_{n+1})e^{\frac{\theta^2_{n+1}}{2t}}\big)^p e^{p S_{[-\theta_{n+1}, -\theta_n]} (t)}\Big]\mathbb{E}\big[\big(\int^{-\theta_n}_{-\theta_{n+1}}e^{f_t(-x)}dx\big)^p\big]e^{-\frac{p\theta^2_{n+1}}{2t}}\label{eq:Stage2last}
\end{align}
in the same way as in \eqref{eq:npos}. In what follows, we explain how to obtain \eqref{eq:SplitStage2} for $n<0$ from the above inequality. We first bound $\mathbb{E}[(\mathcal{Z}^{\mathbf{nw}}(t, -\theta_{n+1})e^{\frac{\theta^2_{n+1}}{2t}})^p e^{p S_{[-\theta_{n+1}, -\theta_n]} (t)}]$ by $\mathbb{E}[\mathcal{Z}^{\mathbf{nw}}(t,0)^p e^{pS_{[0,1]}(t)}]$ in the right side of \eqref{eq:Stage2last} and this substitution is justified by \eqref{eq:DistId} of Proposition~\ref{prop:NWcontrol}. Next, we identify $\mathbb{E}[(\int^{-\theta_n}_{-\theta_{n+1}}e^{f_t(-x)}dx)^p]$ with $\mathbf{E}^{(n)}_{t,p}$ in \eqref{eq:Stage2last} by change of variable inside the integral. Combining the outcomes of these two steps with the fact that left side of \eqref{eq:Stage2last} is equal to $\mathbb{E}[(\int_{\theta_n}^{\theta_{n+1}} \mathcal{Z}^{\mathbf{nw}}(t, x) e^{f_t (x)} dx)^p]$ shows \eqref{eq:SplitStage2} for $n<0$. This completes the proof of the desired result. 


\begin{proof}[Proof of Proposition~\ref{prop:NWcontrol}] (i)
Recall the definition of $S_{[\theta_n,\theta_{n+1}]}(t)$  from \eqref{eq:limsup<1 1}. Rewriting $\frac{(x - \theta_n) \theta_n }{t}$ into $\frac{x^2}{2t} - \frac{\theta^2_n}{2t} - \frac{(x-\theta_n)^2}{2t}$, we get
\begin{align}
S_{[\theta_n,\theta_{n+1}]} (t) 
&= \sup_{x \in [\theta_n,\theta_{n+1}]} \Big(\mathcal{H}^{\mathbf{nw}} (t, x) + \frac{x^2}{2t} - \mathcal{H}^{\mathbf{nw}}(t, \theta_n) - \frac{\theta^2_n}{2t} - \frac{(x-\theta_n)^2}{2t} -  \frac{\nu (x - \theta_n)^2}{2}\Big)\nonumber\\
&= \sup_{x \in [0, \theta_{n+1} - \theta_n]} \Big(\mathcal{H}^{\mathbf{nw}} (t, x+\theta_n) + \frac{(x+\theta_n)^2}{2t}  - \mathcal{H}^{\mathbf{nw}}(t, \theta_n) - \frac{\theta^2_n}{2t}  - \frac{x^2}{2t} -  \frac{\nu x^2}{2}\Big)\label{eq:SnRewrite} ,
\end{align}
where second line is due to a change of variable $x \to x+\theta_n$.
 By Proposition \ref{prop:stationarity}, for any fixed $t>0$, the process $\mathcal{H}^{\mathbf{nw}}(t, x) + \frac{x^2}{2t}$ is stationary in $x$. This implies $\{\mathcal{H}^{\mathbf{nw}} (t, x+\theta_n)+\frac{(x+\theta_n)^2}{2t} :x\in [0,\theta_{n+1}-\theta_n]\}$ is same in distribution with $\{\mathcal{H}^{\mathbf{nw}} (t, x) +\frac{x^2}{2t}\in [0,\theta_n]\}$ for any $n\in \mathbb{Z}.$ Note that 
\begin{align}
S_{[0,\theta_{n+1}-\theta_n]} (t) & = \sup_{x \in [0, \theta_{n+1}-\theta_n]} \Big(\mathcal{H}^{\mathbf{nw}} (t, x) - \mathcal{H}^{\mathbf{nw}}(t, 0) - \frac{\nu x^2}{2}\Big)\nonumber\\& = \sup_{x \in [0, \theta_{n+1}-\theta_n]} \Big(\mathcal{H}^{\mathbf{nw}} (t, x) + \frac{x^2}{2t} - \mathcal{H}^{\mathbf{nw}}(t, 0) - \frac{x^2}{2t} - \frac{\nu x^2}{2}\Big)\label{eq:S0} .
\end{align} 
Now, \eqref{eq:DistId} follows by comparing \eqref{eq:S0} with \eqref{eq:SnRewrite} and using the stationarity of $\mathcal{H}^{\textbf{nw}} (t, x) + \frac{x^2}{2t}$, which implies the equivalence of the law of $\{\mathcal{H}^{\mathbf{nw}} (t, x+\theta_n) +\frac{(x+\theta_n)^2}{2t}:x\in [0,\theta_{n+1}-\theta_{n}]\}$ with $\{\mathcal{H}^{\mathbf{nw}} (t, x) +\frac{x^2}{2t}: x\in [0,\theta_{n+1}-\theta_{n}]\}$. 
\smallskip 
\\

\noindent (ii)
 For any $\epsilon>0$, we seek to show that there exists $C=C(p,\nu,\epsilon)>0$ such that 
\begin{align}
\mathbb{E}\Big[\mathcal{Z}^{\mathbf{nw}}(t, 0)^p e^{p S_{[0,1]} (t)}\Big] \leq C \Big(\mathbb{E}\Big[\mathcal{Z}^{\mathbf{nw}}(t, 0)^{p+\epsilon} \Big]\Big)^{\frac{p}{p+\epsilon}}.\label{eq:Holder}
\end{align} 
Before proceeding to its proof, we first explain how the above inequality implies \eqref{eq:limsup<1 2}. Taking the logarithm and then dividing both side of above display by $t$ and letting $t \to \infty$, we get 
\begin{align*}
\limsup_{t \to \infty} \frac{1}{t} \log \mathbb{E}\Big[\mathcal{Z}^{\mathbf{nw}}(t, 0)^p e^{p S_{[0,1]} (t)}\Big]
&\leq \frac{p}{p+\epsilon} \limsup_{t \to \infty} \frac{1}{t}\log \mathbb{E}\Big[\mathcal{Z}^{\mathbf{nw}}(t, 0)^{p+\epsilon}\Big] = \frac{p (p+\epsilon)^2 - p}{24},
\end{align*}
where the last equality follows from Proposition \ref{prop:DT19thm}. Letting  $\epsilon \to 0$ in the last display, we get the desired \eqref{eq:limsup>1 2}.

It remains to show \eqref{eq:Holder} which is proved as follows. By H\"{o}lder's inequality, for arbitrary $\epsilon > 0$,
\begin{align}\label{eq:MyIntermediate}
\mathbb{E}\Big[\mathcal{Z}^{\mathbf{nw}}(t, 0)^p e^{p S_{[0,1]} (t)}\Big] &\leq \Big(\mathbb{E}\Big[\mathcal{Z}^{\mathbf{nw}}(t, 0)^{p+\epsilon} \Big]\Big)^{\frac{p}{p+\epsilon}} \Big(\mathbb{E}\Big[e^{\frac{p(p+\epsilon)}{\epsilon} S_{[0,1]} (t)} \Big]\Big)^{\frac{\epsilon}{p+\epsilon}} .
\end{align}
From the last inequality, \eqref{eq:Holder} follows if we can bound $\mathbb{E}[e^{\frac{p(p+\epsilon)}{\epsilon} S_{[0,1]} (t)}]$  by some constant $C=C(p,\nu, \epsilon)>0$. We will now accomplish this using the tail probability bound of $S_{[0,1]}(t)$. By \eqref{eq:increment upper tail} of Proposition~\ref{prop: CGH 4.4}, we know that for any fixed $\delta>0$, there exist $s_0=s_0(\delta,\nu)>0$ and $c=c(\delta,\nu)>0$ such that $\mathbb{P}(S_{[0,1]}(t)\geq s)\leq \exp(-cs^{9/8-\delta})$ for all $s\geq s_0$ and $t>1$. We choose $\delta=\frac{1}{17}$. One may notice that $\frac{9}{8}-\frac{1}{17}>1+\frac{1}{17}$. With this computation and tail bound of $S_{[0,1]}(t)$ in hand, we write 
\begin{align}
\mathbb{E}[e^{\frac{p(p+\epsilon)}{\epsilon} S_{[0,1]} (t)}]\leq e^{\frac{p(p+\epsilon)}{\epsilon}s_0}+ \int^{\infty}_{s_0} e^{\frac{p(p+\epsilon)}{\epsilon}s -cs^{1+\frac{1}{17}}}ds. 
\end{align}
The right hand side of the above inequality is a finite constant whose value would depend on $p,\nu,\epsilon$. Combining this with \eqref{eq:MyIntermediate} yields the proof of \eqref{eq:Holder}.


\end{proof}

\bigskip

\begin{proof}[Proof of Proposition~\ref{prop:InitControl}]
Recall the notation $M^{f_t}_p(t,x)$ from Definition~\ref{Def:HypDef}. We will prove \eqref{eq:limsup<1 2} using the following claim: Fix arbitrary $\e > 0$, there exist $C_1=C_1(p,\epsilon)$ and $C_2= C_2(p,\epsilon)>0$ such that for all $t>1$,
\begin{align}
\sum_{n \in \mathbb{Z}_{\geq 0}} e^{-\frac{p\theta^2_n}{2t}} \mathbf{E}^{(n)}_{t,p} + \sum_{n \in \mathbb{Z}_{<0}} e^{-\frac{p\theta^2_{n+1}}{2t}} \mathbf{E}^{(n)}_{t,p}\leq C_1t^{\frac{1}{2(1-\beta)}} + C_2   \int_{\mathbb{R}} e^{-\frac{p(1-\epsilon) x^2}{2t}} M^{f_t}_{p(1+\epsilon)}(t,x) dx\label{eq:Stage2Cruc} ,
\end{align}
where $\beta\in(0,1)$ is the same constant as in the \emph{pseudo-stationarity} condition of Definition~\ref{Def:HypDef}. Recall $\mathbf{E}^{(n)}_{t,p}=\mathbb{E}[(\int^{\theta_{n+1}}_{\theta_{n}}e^{f_t (x)} dx)^p]$.
After proving \eqref{eq:limsup<1 2} which we do as follows, we will proceed to prove the above inequality. Taking the logarithm of both sides of \eqref{eq:Stage2Cruc} and noting that $\log(c_1a+c_2b)\leq \log(\max\{c_1,c_2\})+\log 2a+\max\{\log a, \log b\}$ for any $a\geq 1, b>0$, $c_1,c_2>0$, we get 
\begin{align}
\log\big(\text{r.h.s. of \eqref{eq:Stage2Cruc}})\leq \log(\max\{C_1,C_2\})+
\log 2t^{\frac{1}{2(1-\beta)}} +\max\Big\{\log t^{\frac{1}{2(1-\beta)}}, \log\Big(\int_{\mathbb{R}} e^{-\frac{p(1-\epsilon) x^2}{2t}} M^{f_t}_{p(1+\epsilon)}(t,x)\Big)\Big\}.\label{eq:Stage2Cruc2}
\end{align}
Now, we divide both sides by $t$ and let $t \to \infty$. On doing so, we claim that the limit of the right hand side is less than $g(p)$. To see this, we first write 
\begin{align*}
\limsup_{t\to \infty}&\frac{1}{t}\max\Big\{\log t^{\frac{1}{2(1-\beta)}}, \log(\int_{\mathbb{R}} \exp(-p(1-\epsilon) x^2/2t) M^{f_t}_{p(1+\epsilon)}(t,x))dx)\Big\}
\\& \leq \max\Big\{\limsup_{t\to \infty}\frac{1}{t}\log t^{\frac{1}{2(1-\beta)}}, \limsup_{t\to \infty}\frac{1}{t}\log(\int_{\mathbb{R}} \exp(-p(1-\epsilon) x^2/2t) M^{f_t}_{p(1+\epsilon)}(t,x))dx)\Big\} .
\end{align*} 
Then, we note 
$$\lim_{t\to \infty}\frac{1}{t}\log(\max\{C_1, C_2\}) =0, \quad \lim_{t\to \infty}\frac{1}{t}\log t^{\frac{1}{2(1-\beta)}}= 0, \quad \liminf_{\epsilon\to 0}\limsup_{t\to \infty}\frac{1}{t}\log\Big(\int_{\mathbb{R}} e^{-\frac{p(1-\epsilon) x^2}{2t}} M^{f_t}_{p(1+\epsilon)}(t,x) dx\Big) \leq g(p) ,$$
where the last inequality follows by applying \eqref{eq:condition 3}.  Substituting these limiting results into the right hand side of \eqref{eq:Stage2Cruc2} and using the non-negativitiy of $g(p)$ (Lemma \ref{lem:condition} (i)) conclude \eqref{eq:limsup<1 2}. This proves Proposition~\ref{prop:InitControl} modulo \eqref{eq:Stage2Cruc} which is proved as follows.   
 

Note that \eqref{eq:Stage2Cruc} bounds a discrete sum by an integral. This passage from discrete to continuum requires a locally uniform control on the discrete summands of \eqref{eq:Stage2Cruc2} which we seek to extract from the tail bounds of \eqref{eq:ModCont}. To this aim, for any $n\in \mathbb{Z}$,
\begin{equation}\label{eq:TVdef}
\mathrm{TV}_{f_t}(n):= \sup_{y \in [\theta_n, \theta_{n+1}]} |f_t (x) - f_t (\theta_n)|.
\end{equation}
Since $\mathrm{TV}_{f_t}(n)$ is the supremum of $|f_t (x) - f_t (\theta_n)|$ as $x$ varies in $ [\theta_n, \theta_{n+1}]$, we may bound $f_t (x)$ by $ \mathrm{TV}_{f_t}(n) + f_t (\theta_n)$ for all $x\in [\theta_n,\theta_{n+1}]$. This allows us to bound $\mathbb{E}[(\int_{\theta_n}^{\theta_{n+1}} e^{f_t (x)} dx)^p]$ by $\mathbb{E}[e^{p f_t (\theta_n)}  e^{p\mathrm{TV}_{f_t} (n)}]$. Hereafter, we prove \eqref{eq:Stage2Cruc} in two \emph{steps}. \emph{Step 1} will show that there exist $c_1=c_1(p,\epsilon)>0$ and $c_2=c_2(p,\epsilon)>0$ such that the following inequality 
\begin{align}
\mathbb{E}\big[e^{p f_t (\theta_n)}  e^{p\mathrm{TV}_{f_t} (n)}\big]\leq c_1\big(1+ \mathbb{E}\big[e^{p(1+\epsilon/2)f_t(\theta_n)}\big]\big)\leq c_2\big(1+ \int^{\theta_{n+1}}_{\theta_n}M^{f_t}_{p(1+\epsilon)}(t,x)dx\big)\label{eq:Mptx}
\end{align}
holds for all $n\in \mathbb{Z}$. In \emph{Step 2}, we will prove the following: there exist $c^{\prime}_1=c^{\prime}_1(p,\epsilon)>0$ and $c^{\prime}_2=c^{\prime}_2(p,\epsilon)>0$ such that for all $t>1$
\begin{align}
\sum_{n\in \mathbb{Z}_{\geq 0}} e^{-\frac{p\theta^2_n}{2t}} &\Big(1+ \int^{\theta_{n+1}}_{\theta_n}M^{f_t}_{p(1+\epsilon)}(t,x)dx\Big)+ \sum_{n\in \mathbb{Z}_{< 0}} e ^{-\frac{p\theta^2_{n+1}}{2t}} \Big(1+ \int^{\theta_{n+1}}_{\theta_n}M^{f_t}_{p(1+\epsilon)}(t,x)dx\Big)\nonumber\\&\leq c^{\prime}_1t^{\frac{1}{2(1-\beta)}}+c^{\prime}_2\int_{\mathbb{R}}e^{-\frac{p(1-\epsilon)x^2}{2t}} M^{f_t}_{p(1+\epsilon)}(t,x) dx \label{eq:DToC}
\end{align}
where $\beta\in (0,1]$ is the same constant as in the \emph{pseudo-stationarity} condition of Definition~\ref{Def:HypDef}.
Combining \eqref{eq:Mptx} with \eqref{eq:DToC} yields \eqref{eq:Stage2Cruc}.
\smallskip 
\\


\noindent \emph{Step 1:} We start with showing the first inequality of \eqref{eq:Mptx}. By denoting $X:= \exp(p f_t (\theta_n))$ and $W:= \exp(p\mathrm{TV}_{f_t} (n))$, we apply H\"older's inequality to bound $\mathbb{E}[XW]$ by $(\mathbb{E}[X^{(1+\epsilon/2)}])^{1/(1+\epsilon/2)}(\mathbb{E}[W^{(2+\epsilon)/\epsilon}])^{\epsilon/(2+\epsilon)}$. The first inequality of \eqref{eq:Mptx} will follow from this upper bound once we show 
\begin{align}\label{eq:TwoIneq}
(\mathbb{E}[X^{(1+\epsilon/2)}])^{1/(1+\epsilon/2)}\leq 1+ \mathbb{E}[X^{(1+\epsilon/2)}], \qquad \qquad \qquad (\mathbb{E}[W^{(2+\epsilon)/\epsilon}])^{\epsilon/(2+\epsilon)}\leq c_1
\end{align}
for some constant $c_1=c_1(p,\epsilon)>0$. The left hand side inequality is straightforward since $x^{a}\leq \max\{1,x\}$ for any $x>0$ and $a\in (0,1)$. To prove the right hand side inequality, we use the tail bound $\mathrm{TV}_{f_t} (n)$. By \eqref{eq:ModCont}, we know that for any $\delta>0$, there exist $s_0=s_0(\delta)>0$ and $c=c(\delta)>0$ such that $\mathbb{P}(\mathrm{TV}_{f_t} (n)>s)\leq \exp(-cs^{1+\delta})$ for all $s\geq s_0$ and $t>0$. With this tail estimate, we can bound $\mathbb{E}[W^{(2+\epsilon)/\epsilon}]$ by $\exp(ps_0(2+\epsilon)/\epsilon)+ \int^{\infty}_{s_0}\exp(ps(2+\epsilon)/\epsilon- cs^{1+\delta})ds$ from above. Since this upper bound is a constant which only depends on $p,\delta,\epsilon$, we get the right hand side inequality of the above display. Combining both proofs shows the first inequality of \eqref{eq:Mptx}.

Now, we show the second inequality of \eqref{eq:Mptx}. Since $f_t(\theta_n)$ is bounded above by $f_t(x)+\mathrm{TV}_{f_t}(n)$ for all $n\in \mathbb{Z}$ and $x\in [\theta_n,\theta_{n+1}]$, we get  
\begin{equation*}
\mathbb{E}\Big[e^{p(1 + \epsilon/2) f_t (\theta_n)}\Big] \leq \int_{\theta_n}^{\theta_{n+1}} \mathbb{E}\Big[e^{p (1 +\epsilon/2) f_t (x)} e^{p(1 + \epsilon/2) \mathrm{TV}_{f_t} (n)}\Big] dx .
\end{equation*}
From this upper bound, the second inequality of \eqref{eq:Mptx} follows if we can show that $\mathbb{E}\big[\exp( (1 +\epsilon/2) f_t (x)) \exp\big(p(1 + \epsilon/2) \mathrm{TV}_{f_t} (n)\big)\big]$ is bounded by $c_1+ c_2M^{f_t}_{p(1+\epsilon)}(t,x)$ for some constants $c_1=c_1(p,\epsilon)>0$ and $c_2= c_2(p,\epsilon)>0$. The proof of this upper bound is similar in spirit to the argument in the previous paragraph. We claim and prove this bound as follows. By denoting $X^{\prime}:= \exp(pf_t(x)(1+\epsilon/2))$ and $W^{\prime}:= \exp\big(p(1 + \epsilon/2) \mathrm{TV}_{f_t} (n)\big)$, we use H\"older's inequality to bound $\mathbb{E}[X^{\prime}W^{\prime}]\leq (E[(X^{\prime})^{u}])^{1/u}(\mathbb{E}[(W^{\prime})^v]^{1/v})$ where $u= (1+\epsilon)/(1+\epsilon/2)$ and $u^{-1}+v^{-1}=1$. Using similar argument as in the proof of \eqref{eq:TwoIneq}, we bound $(E[(X^{\prime})^{u}])^{1/u}$ by $1+E[(X^{\prime})^{u}]$ and $(\mathbb{E}[(W^{\prime})^v]^{1/v})$ by some constant which only depends on $p,\epsilon$. Combining these shows that $\mathbb{E}[X^{\prime}W^{\prime}]$ is bounded above by $c(1+E[X^{\prime})^{u}])$. This proves our claim since $E[(X^{\prime})^{u}]=\mathbb{E}[\exp(p(1+\epsilon)f_t(x))]= M^{f_t}_{p(1+\epsilon)}(t,x)$. As a consequence, we get the second inequality of \eqref{eq:Mptx}.
\smallskip 
\\

\noindent \emph{Step 2:} To prove \eqref{eq:DToC}, we first claim $\sum_{n\in \mathbb{Z}_{\geq 0}} \exp(-\frac{p\theta^2_n}{2t})$ and $\sum_{n\in \mathbb{Z}_{<0}} \exp(-\frac{p\theta^2_{n+1}}{2t})$ can be bounded by $Ct^{1/(2(1-\beta))}$ for all $t>1$ where $\beta$ is the same constant as in the \emph{pseudo-stationarity condition} of Definition~\ref{Def:HypDef} and the constant $C>0$ depends on $p$ and $\beta$. Note that $1\geq |\theta_{n+1}-\theta_n|\geq \min\{1,c|n|^{-\beta}\}$ for some $c>0$, $\beta\in (0,1)$. Therefore, there exists $c_1,c_2>0$ such that $c_1n\geq |\theta_n|\geq c_2 |n|^{1-\beta}$ for all $n\in \mathbb{Z}$. Due to the last inequality, we may write $$|\theta_{n+1}-\theta_{n}|\geq D\max\{1,|\theta_n|^{-\frac{\beta}{(1-\beta)}}\}\geq D \max\{1,(|x|+1)^{-\frac{\beta}{(1-\beta)}}\},\quad \forall x\in [\theta_n,\theta_{n+1}]$$ for some constant $D>0$. 
Since $\exp(-\frac{p\theta^2_n}{2t})$ and $\exp(-\frac{p\theta^2_{m+1}}{2t})$ decreases as $n\uparrow \infty$ and $m\downarrow -\infty$ bounding the Riemann sum with its integral approximation yields 
\begin{align*}
\max\Big\{\sum_{n\in \mathbb{Z}_{\geq 0}} \exp(-\frac{p\theta^2_n}{2t}), \sum_{n\in \mathbb{Z}_{<0}} \exp(-\frac{p\theta^2_{n+1}}{2t})\Big\}&\leq 1+D^{-1}\int_{\mathbb{R}}(|x|+1)^{\frac{\beta}{1-\beta}}\exp(-\frac{px^2}{2t})dx\\&\leq 1+2^{\frac{\beta}{1-\beta}}D^{-1}\int_{\mathbb{R}}(|x|^{\frac{\beta}{1-\beta}}+1)\exp(-\frac{px^2}{2t})dx ,
\end{align*} 
where the last inequality follows since $(|x|+1)^{\beta/(1-\beta)}$ is bounded by $2^{\beta/(1-\beta)}(|x|^{\beta/(1-\beta)}+1)$. The integral on the right hand side of the above display is bounded by $Ct^{1/(2(1-\beta))}$ when $t>1$ for some constant $C=C(p,\beta)>0$.
This proves our claim. To complete the proof of \eqref{eq:DToC}, it remains to show the following: there exists $t_0=t_0(\epsilon)>0$ such that for all $t>t_0$,
\begin{align}
\sum_{n\in \mathbb{Z}_{\geq 0}} e^{-\frac{p\theta^2_n}{2t}}\int^{\theta_{n+1}}_{\theta_n}M^{f_t}_{p(1+\epsilon)}(t,x)dx &\leq C_1+C_2 \int_{\mathbb{R}}e^{-\frac{p(1-\epsilon)x^2}{2t}}M^{f_t}_{p(1+\epsilon)}(t,x)dx,\label{eq:1} \\\sum_{n\in \mathbb{Z}_{<0}} e^{-\frac{p\theta^2_{n+1}}{2t}}\int^{\theta_{n+1}}_{\theta_n}M^{f_t}_{p(1+\epsilon)}(t,x)dx &\leq C_1+C_2 \int_{\mathbb{R}}e^{-\frac{p(1-\epsilon)x^2}{2t}}M^{f_t}_{p(1+\epsilon)}(t,x)dx
\end{align} 
for some constants $C_1=C_1(p,\epsilon)>0$ and $C_2=C_2(p,\epsilon)$. We only prove \eqref{eq:1}. The proof of the other inequality is similar and details are skipped.

For any given $\epsilon>0$, there exists $n_0=n_0(\epsilon)\in \mathbb{Z}_{\geq 0}$ such that $\theta^2_n\geq (1-\epsilon)x^2$ for all $x\in [\theta_n,\theta_{n+1}]$ and $n\geq n_0$. We write left side of \eqref{eq:1} as  
\begin{align}
\text{l.h.s. of \eqref{eq:1}} = \sum_{0\leq n<n_0(\epsilon)} e^{-\frac{p\theta^2_n}{2t}}\int^{\theta_{n+1}}_{\theta_n}M^{f_t}_{p(1+\epsilon)}(t,x)dx  + \sum_{n\geq n_0(\epsilon)} e^{-\frac{p\theta^2_n}{2t}}\int^{\theta_{n+1}}_{\theta_n}M^{f_t}_{p(1+\epsilon)}(t,x)dx. \label{eq:SplitSum}
\end{align}
We can bound the last term on the right side of the above display by $\int_{\mathbb{R}}\exp(-\frac{p(1-\epsilon)x^2}{2t})M^{f_t}_{p(1+\epsilon)}(t,x)dx$ since $\theta^2_n\geq (1-\epsilon)x^2$ for all $x\in [\theta_n,\theta_{n+1}]$ and $n\geq n_0$. Using the pointwise upper bound on $M^{f_t}_{p(1+\epsilon)}(t,x)$ from \eqref{eq:growth condition}, we can write 
\begin{align*}
\sum_{0\leq n<n_0(\epsilon)} e^{-\frac{p\theta^2_n}{2t}}\int^{\theta_{n+1}}_{\theta_n}M^{f_t}_{p(1+\epsilon)}(t,x)dx\leq \sum_{0\leq n<n_0(\epsilon)} e^{-\frac{p\theta^2_n}{2t}}e^{Cp(1+\epsilon)(1+\theta^{\delta}_{n_0})+ \frac{\alpha \theta^2_{n_0}}{2t}}\leq n_0 e^{Cp(1+\epsilon)(1+\theta^{\delta}_{n_0})+ \alpha p \theta^2_{n_0}} ,
\end{align*}
where the last inequality follows by bounding $e^{-\frac{p\theta^2_{n_0}}{2t}}$ by $1$ for all $0\leq n<n_0$ and taking $t>1$.
Due to the above bound, the first term in the right side of \eqref{eq:SplitSum} is bounded by some constant $C=C(p,\epsilon)>0$. Combining the upper bounds on both summands of \eqref{eq:SplitSum} yields \eqref{eq:1}. This completes the proof of \eqref{eq:DToC} and Proposition~\ref{prop:InitControl}.

\end{proof}

\subsection{Proof of $\mathsf{LimInf}_p$ for all $p\in \mathbb{R}_{>0}$}\label{sec:Lower bound}
Fix any $p,\nu>0$ . Recall the notation $x_{p,\gamma}(t)$ of Lemma~\ref{lem:condition}. For any $0<\epsilon<p/2$ and $\gamma>0$, let $n_{p,\epsilon,\gamma}(t)\in \mathbb{Z}$ be such that $x_{p-\epsilon,\gamma}(t) \in [\theta_{n_{p,\epsilon,\gamma}(t)},\theta_{n_{p,\epsilon,\gamma}(t)+1}]$ where $\{\theta_n\}_{n\in \mathbb{Z}}$ is a sequence of grid points (see Definition~\ref{Def:HypDef}) such that $f_t$ satisfies \eqref{eq:ModCont} for all $t>0$. For notational convenience, we will denote $n_{p,\epsilon,\gamma}(t)$ by $n(t)$ and the interval $[\theta_{n(t)},\theta_{n(t)+1}]$ by $I(t)$. 
For convenience, we use the following shorthand notations:
 \begin{align}
\mathcal{Z}^{\mathbf{nw}}_{p,\epsilon}(t) := \mathcal{Z}^{\mathbf{nw}} (t, x_{p-\epsilon,\gamma} (t)) &e^{\frac{x^2_{p-\epsilon,\gamma}(t)}{2t}},\label{eq:Z(t)defin} \\
Y_{p,\epsilon}(t): = \inf_{x \in I(t)} \Big\{\mathcal{H}^{\mathbf{nw}} (t, x) - \mathcal{H}^{\mathbf{nw}} \big(t, x_{p-\epsilon,\gamma} (t)\big) &+ \frac{\big(x-x_{p-\epsilon,\gamma} (t)\big) x_{p-\epsilon,\gamma} (t)}{t} + \frac{\nu \big(x-x_{p-\epsilon,\gamma} (t)\big)^2}{2}\Big\}.\label{eq:Y(t)defin} 
\end{align}
As in Section~\ref{sec:Upper bound}, we rely on the convolution formula of Proposition~\ref{prop:convolution} to express the moments of $\mathcal{Z}^{f_t}(t,0)$ in terms the moment of a integral involving $\mathcal{Z}^{\mathbf{nw}}(t,\cdot)$ and $e^{f_t(\cdot)}$. To prove $\mathsf{LimInf}_p$, we analyze the expected value of $p$-th moment of this integral over the interval $I(t)$. 
After localization of the integral, as we show, proving $\mathsf{LimInf}_p$ requires lower bound on the $p$-th moment of $\mathcal{Z}^{\mathbf{nw}}_{p,\epsilon}(t)e^{Y_{p,\epsilon}(t)}$ and $\int_{I(t)}e^{f_t(x)}dx$. Proposition~\ref{prop:liminf} and~\ref{prop:liminf2} will provide such lower bound. In what follows, we first state those propositions; prove $\mathsf{LimInf}_p$ and then, proceed to prove those ensuing propositions.  
\begin{prop}\label{prop:liminf}
We have
$\liminf_{\epsilon \to 0} \liminf_{t \to \infty} \tfrac{1}{t}\log \mathbb{E}[(\mathcal{Z}^{\mathbf{nw}}_{p,\epsilon}(t))^p e^{p Y_{p,\epsilon}(t)}] \geq \frac{p^3 - p}{24}.$
\end{prop}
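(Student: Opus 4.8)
The goal is a lower bound matching the known Lyapunov exponent $\frac{p^3-p}{24}$ from Proposition~\ref{prop:DT19thm}, so morally we want to say that $\mathcal{Z}^{\mathbf{nw}}_{p,\epsilon}(t)\,e^{Y_{p,\epsilon}(t)}$ is not much smaller than $\mathcal{Z}^{\mathbf{nw}}(t,0)$ in the $p$-th moment sense. The first step is to isolate the two factors: use the fact that $x_{p-\epsilon,\omega}(t)$ is (by Lemma~\ref{lem:condition}(ii)) of order at most $Ct$, so that $x_{p-\epsilon,\omega}(t)/t$ is bounded, hence the linear-in-$x$ correction term $(x-x_{p-\epsilon,\omega}(t))x_{p-\epsilon,\omega}(t)/t$ appearing in $Y_{p,\epsilon}(t)$ is bounded on the unit-length interval $I(t)$; similarly the quadratic term $\nu(x-x_{p-\epsilon,\omega}(t))^2/2$ is bounded there. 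Thus $e^{Y_{p,\epsilon}(t)} \geq c\cdot e^{\inf_{x\in I(t)}(\mathcal{H}^{\mathbf{nw}}(t,x) - \mathcal{H}^{\mathbf{nw}}(t,x_{p-\epsilon,\omega}(t)))}$ for a deterministic constant $c = c(p,\nu,\omega)>0$, reducing the problem to controlling the infimum of the increment of $\mathcal{H}^{\mathbf{nw}}$ over an interval of length at most $1$ anchored near the point $x_{p-\epsilon,\omega}(t)$.

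\textbf{Key steps.} First I would apply the translation/stationarity identity (Proposition~\ref{prop:stationarity}, in the form used to prove Proposition~\ref{prop:NWcontrol}(i)) to reduce the pair $(\mathcal{Z}^{\mathbf{nw}}_{p,\epsilon}(t), Y_{p,\epsilon}(t))$ in distribution to a pair anchored at $0$, namely something like $(\mathcal{Z}^{\mathbf{nw}}(t,0), \inf_{x\in[0,\ell]}\{\mathcal{H}^{\mathbf{nw}}(t,x)-\mathcal{H}^{\mathbf{nw}}(t,0)+\nu x^2/2\})$ with $\ell=|\theta_{n(t)+1}-\theta_{n(t)}|\leq 1$; after the stationarity shift the quadratic-correction bookkeeping is exactly the computation in \eqref{eq:SnRewrite}--\eqref{eq:S0}. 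Second, write $\mathbb{E}[(\mathcal{Z}^{\mathbf{nw}}(t,0))^p e^{pY}]$ with $Y := \inf_{x\in[0,1]}\{\mathcal{H}^{\mathbf{nw}}(t,x)-\mathcal{H}^{\mathbf{nw}}(t,0)+\nu x^2/2\}$, split the expectation over the event $\{Y\geq -s\}$ and its complement, and on $\{Y\geq -s\}$ lower bound by $e^{-ps}\,\mathbb{E}[(\mathcal{Z}^{\mathbf{nw}}(t,0))^p \mathbf{1}_{\{Y\geq -s\}}]$. Third, to pass from this truncated moment back to the full moment $\mathbb{E}[(\mathcal{Z}^{\mathbf{nw}}(t,0))^p]$, estimate $\mathbb{E}[(\mathcal{Z}^{\mathbf{nw}}(t,0))^p\mathbf{1}_{\{Y<-s\}}]$ by Hölder (or Cauchy--Schwarz): it is at most $(\mathbb{E}[(\mathcal{Z}^{\mathbf{nw}}(t,0))^{p+\epsilon'}])^{p/(p+\epsilon')}\,\mathbb{P}(Y<-s)^{\epsilon'/(p+\epsilon')}$, and the probability factor is super-exponentially small by the lower-tail increment bound \eqref{eq:increment lower tail} of Proposition~\ref{prop: CGH 4.4} (with the exponent $9/8-\delta>1$ chosen as in the proof of Proposition~\ref{prop:NWcontrol}(ii)). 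Choosing $s=s(t)$ growing slowly with $t$ — say $s = \log t$ or a small power of $t$ — makes $e^{-ps(t)/t}\to 1$ while the tail correction is swept away, so that
\begin{align*}
\liminf_{t\to\infty}\frac{1}{t}\log\mathbb{E}\big[(\mathcal{Z}^{\mathbf{nw}}(t,0))^p e^{pY}\big] \;\geq\; \liminf_{t\to\infty}\frac{1}{t}\log\mathbb{E}\big[(\mathcal{Z}^{\mathbf{nw}}(t,0))^{p}\big] \;=\; \frac{p^3-p}{24}
\end{align*}
by Proposition~\ref{prop:DT19thm}, and the outer $\liminf_{\epsilon\to 0}$ is harmless since the estimate is uniform in small $\epsilon$.

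\textbf{Main obstacle.} The delicate point is that the anchor point $x_{p-\epsilon,\omega}(t)$ is itself $t$-dependent (and random-environment-adapted through the definition of $\mathrm{MAX}^f_{p,\omega}$), so one must be careful that the stationarity reduction is legitimate — i.e. that for each fixed $t$ the shift by the \emph{deterministic} quantity $x_{p-\epsilon,\omega}(t)$ (it is measurable w.r.t.\ the initial data, hence independent of $\mathcal{Z}^{\mathbf{nw}}$) genuinely preserves the law of $\mathcal{H}^{\mathbf{nw}}(t,\cdot)+\tfrac{(\cdot)^2}{2t}$. A second subtlety is the interplay of the three limits: we need the truncation level $s(t)$ and the Hölder exponent $\epsilon'$ chosen so that the error term's rate is strictly negative and the main term's rate is unaffected, and we must verify the increment tail bound \eqref{eq:increment lower tail} applies on an interval of the form $[x_{p-\epsilon,\omega}(t), x_{p-\epsilon,\omega}(t)+\ell]$ — after the stationarity shift this becomes $[0,\ell]\subset[0,t^{1/3}]$ for large $t$, which is exactly the regime covered by Proposition~\ref{prop: CGH 4.4}, so this works provided $|x_{p-\epsilon,\omega}(t)|\leq Ct$ (hence $\ell\le 1\le t^{1/3}$) as guaranteed by Lemma~\ref{lem:condition}(ii). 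Everything else is bookkeeping of the type already carried out for the $\mathsf{LimSup}$ direction.
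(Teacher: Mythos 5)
Your decomposition -- truncate on $\{Y\geq -s\}$, subtract the complement, and kill the complement by H\"older plus the lower-tail bound -- is a genuinely different route from the paper's. The paper never truncates: it writes $(\mathcal{Z}^{\mathbf{nw}}_{p,\epsilon}(t))^{p-\epsilon}=X\cdot W$ with $X=(\mathcal{Z}^{\mathbf{nw}}_{p,\epsilon}(t))^{p-\epsilon}e^{(p-\epsilon)Y_{p,\epsilon}(t)}$ and $W=e^{-(p-\epsilon)Y_{p,\epsilon}(t)}$ and applies H\"older once in the ``reverse'' direction, yielding
\begin{equation*}
\mathbb{E}\Big[(\mathcal{Z}^{\mathbf{nw}}_{p,\epsilon}(t))^{p}e^{pY_{p,\epsilon}(t)}\Big]\geq\Big(\mathbb{E}\big[(\mathcal{Z}^{\mathbf{nw}}_{p,\epsilon}(t))^{p-\epsilon}\big]\Big)^{\frac{p}{p-\epsilon}}\Big(\mathbb{E}\big[e^{-\frac{p(p-\epsilon)}{\epsilon}Y_{p,\epsilon}(t)}\big]\Big)^{-\frac{\epsilon}{p-\epsilon}}.
\end{equation*}
Because the lowered moment index $p-\epsilon$ and the negative power on the $Y$-factor are ``aligned'' with the desired direction of the inequality, the second factor only needs to be bounded \emph{uniformly in $t$}, which follows directly from the stationarity shift and Proposition~\ref{prop: CGH 4.4} with no competition between a time-growing moment and a $t$-dependent threshold. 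By contrast, your argument raises the moment index to $p+\epsilon'$ in the error term, so $\big(\mathbb{E}[(\mathcal{Z}^{\mathbf{nw}}(t,0))^{p+\epsilon'}]\big)^{p/(p+\epsilon')}$ grows like $\exp\big(t\,\frac{p(p+\epsilon')^2-p}{24}\big)$, which is \emph{strictly faster} than the main term $\exp(tL_p)$; you must hand back a genuinely exponential-in-$t$ decay from $\mathbb{P}(Y<-s(t))^{\epsilon'/(p+\epsilon')}$ to restore the inequality.

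That is where there is a concrete gap. With the bound $\mathbb{P}(Y<-s)\leq\exp(-cs^{9/8-\delta})$ from \eqref{eq:increment lower tail}, you need $s(t)^{9/8-\delta}\gtrsim t$, i.e.\ $s(t)\gtrsim t^{8/(9-8\delta)}$ (with a large enough constant in front to exceed $\frac{(p+\epsilon')\,p\,(2p+\epsilon')}{24c}$). Neither $s=\log t$ nor a ``small'' power of $t$ will do; $s=\log t$ makes the error term dominate and destroys the inequality entirely. What saves the approach is that $8/(9-8\delta)<1$ for $\delta$ small, so a power close to but below $1$ is simultaneously large enough to annihilate the tail and small enough that $\frac{1}{t}\log e^{-ps(t)}=-p\,s(t)/t\to 0$. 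If you repair the choice of $s(t)$ this way (and fix $\epsilon'>0$, applying Proposition~\ref{prop:DT19thm} separately at $q=p$ and $q=p+\epsilon'$), the argument does work, and the rest of your plan -- stationarity to reduce the anchor to $0$, extension of the interval to $[-1,1]$ by reflection, using $|x_{p-\epsilon,\omega}(t)|\leq Ct$ from Lemma~\ref{lem:condition}(ii) to bound the parabolic and linear correction terms -- is all correct and mirrors the bookkeeping in the paper's proof. But as stated, the mis-stated growth rate of $s(t)$ is a real error in the one step that your route makes delicate and the paper's route avoids.
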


\begin{prop}\label{prop:liminf2}
We have 
\begin{align}\label{eq:liminf 5}
\liminf_{\epsilon \to 0}\liminf_{t \to \infty} \frac{1}{t} \log\Big( e^{-\frac{p x_{p-\epsilon,\gamma} (t)^2}{2t}} \mathbb{E}\Big[\Big(\int_{I(t)} e^{f_t (x)} dx\Big)^p\Big]\Big) \geq g(p).
\end{align}
\end{prop}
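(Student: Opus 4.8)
The plan is to localize the integral $\int_{I(t)}e^{f_t(x)}\,dx$ around the deterministic point $x^{\ast}:=x_{p-\epsilon,\omega}(t)$, which by the definition of $\mathrm{MAX}^{f}_{p-\epsilon,\omega}(t)$ in \eqref{eq:MAXDef} comes within $\omega$ of maximizing $y\mapsto-\frac{(p-\epsilon)y^{2}}{2t}+\log M^{f_t}_{p-\epsilon}(t,y)$, and then to upgrade the moment exponent from $p-\epsilon$ to $p$ at the cost of a $t$-independent multiplicative constant, using the pseudo-stationarity condition to control the oscillation of $f_t$ across the grid cell $I(t)=[\theta_{n(t)},\theta_{n(t)+1}]$ that contains $x^{\ast}$. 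Fix $p>0$, $0<\epsilon<p/2$ and $\omega>0$, and abbreviate $\mathrm{TV}:=\sup_{y\in I(t)}|f_t(y)-f_t(\theta_{n(t)})|$. Since both $x^{\ast}$ and $\theta_{n(t)}$ lie in $I(t)$, one has $f_t(x)\ge f_t(x^{\ast})-2\mathrm{TV}$ for every $x\in I(t)$, so
\begin{equation*}
\mathbb{E}\Big[\Big(\int_{I(t)}e^{f_t(x)}\,dx\Big)^{p}\Big]\ \ge\ |I(t)|^{p}\,\mathbb{E}\big[e^{p f_t(x^{\ast})-2p\,\mathrm{TV}}\big].
\end{equation*}

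The central step I would carry out next is the estimate $\mathbb{E}[e^{pf_t(x^{\ast})-2p\,\mathrm{TV}}]\ge c(p,\epsilon)\,M^{f_t}_{p-\epsilon}(t,x^{\ast})^{p/(p-\epsilon)}$ for a constant $c(p,\epsilon)>0$ independent of $t$ and of the index $n(t)$. When $f_t$ is deterministic this is immediate since $\mathrm{TV}\le s_{0}$ and $e^{pf_t(x^{\ast})}=M^{f_t}_{p-\epsilon}(t,x^{\ast})^{p/(p-\epsilon)}$, with $c(p,\epsilon)=e^{-2ps_{0}}$. When $f_t$ is random I would start from the pointwise identity $e^{(p-\epsilon)f_t(x^{\ast})}=(e^{pf_t(x^{\ast})-2p\,\mathrm{TV}})^{(p-\epsilon)/p}\,e^{2(p-\epsilon)\mathrm{TV}}$, apply H\"older's inequality with conjugate exponents $\frac{p}{p-\epsilon}$ and $\frac{p}{\epsilon}$, and invoke the tail bound \eqref{eq:ModCont} — which is uniform over $n\in\mathbb{Z}$ and $t>t_{0}$ — to see that $\kappa:=\sup_{t>t_{0}}\mathbb{E}[e^{\frac{2p(p-\epsilon)}{\epsilon}\mathrm{TV}}]$ is finite; rearranging gives the bound with $c(p,\epsilon)=\kappa^{-\epsilon/(p-\epsilon)}$. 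Substituting this into the previous display and pulling out $e^{-p(x^{\ast})^{2}/2t}$,
\begin{equation*}
e^{-\frac{p(x^{\ast})^{2}}{2t}}\,\mathbb{E}\Big[\Big(\int_{I(t)}e^{f_t(x)}\,dx\Big)^{p}\Big]\ \ge\ c(p,\epsilon)\,|I(t)|^{p}\,\exp\!\Big(\tfrac{p}{p-\epsilon}\Big(-\tfrac{(p-\epsilon)(x^{\ast})^{2}}{2t}+\log M^{f_t}_{p-\epsilon}(t,x^{\ast})\Big)\Big),
\end{equation*}
and since $x^{\ast}\in\mathrm{MAX}^{f}_{p-\epsilon,\omega}(t)$ the exponent is at least $\frac{p}{p-\epsilon}\big(\sup_{y}\{-\tfrac{(p-\epsilon)y^{2}}{2t}+\log M^{f_t}_{p-\epsilon}(t,y)\}-\omega\big)$.

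To finish I would take $\frac1t\log$ and send $t\to\infty$. The prefactor $|I(t)|^{p}$ is subexponential: Lemma~\ref{lem:condition}(ii) gives $|x^{\ast}|\le Ct$ for $t>T_{0}$ (legitimate since $\frac{p}{2}<p-\epsilon<2p$), which with the grid-spacing bounds $c_{2}|n|^{1-\beta}\le|\theta_{n}|\le c_{1}|n|$ forces $|n(t)|\lesssim t^{1/(1-\beta)}$, hence $|I(t)|\ge(\max\{c|n(t)|,1\})^{-\beta}\gtrsim t^{-\beta/(1-\beta)}$ and $\frac1t\log|I(t)|^{p}\to0$; also $\frac1t\log c(p,\epsilon)\to0$ and $\omega/t\to0$. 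By \eqref{eq:Gp}, $\frac1t\sup_{y}\{-\tfrac{(p-\epsilon)y^{2}}{2t}+\log M^{f_t}_{p-\epsilon}(t,y)\}\to g(p-\epsilon)$, so $\liminf_{t\to\infty}\frac1t\log\big(e^{-p(x^{\ast})^{2}/2t}\,\mathbb{E}[(\int_{I(t)}e^{f_t(x)}\,dx)^{p}]\big)\ge\frac{p}{p-\epsilon}\,g(p-\epsilon)$. Finally, since $g$ is convex on $\mathbb{R}_{>0}$ by Lemma~\ref{lem:condition}(i), hence continuous there, $\liminf_{\epsilon\to0}\frac{p}{p-\epsilon}g(p-\epsilon)=g(p)$, which is \eqref{eq:liminf 5}. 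I expect the main obstacle to be the central step: transferring the near-optimality of $x^{\ast}$, known only at the moment exponent $p-\epsilon$, into a genuine lower bound on $\mathbb{E}[(\int_{I(t)}e^{f_t})^{p}]$ with all error constants independent of $t$ and of the growing grid index $n(t)$ — this is exactly where the super-exponential tail \eqref{eq:ModCont} and its uniformity are indispensable.
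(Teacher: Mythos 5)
Your proposal is correct and follows essentially the same route as the paper: lower-bound the integral on $I(t)$ by $|I(t)|^{p}e^{pf_t(x^\ast)-2p\,\mathrm{TV}}$, upgrade the moment exponent from $p-\epsilon$ to $p$ via H\"older together with the uniform super-exponential tail \eqref{eq:ModCont}, invoke the $\mathrm{MAX}^{f}_{p-\epsilon,\omega}$ near-optimality with \eqref{eq:Gp}, control $|I(t)|$ through Lemma~\ref{lem:condition}(ii) and the grid-spacing bounds, and pass $\epsilon\to0$ using the convexity (hence continuity) of $g$. Your bookkeeping is in fact slightly cleaner than the paper's display, which has a stray $p$ in place of $2p$ in the $\mathrm{TV}$ exponent and writes $(p-\epsilon)/p$ where $p/(p-\epsilon)$ is meant.
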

\begin{proof}[\textbf{Proof of $\mathsf{LimInf}_p$:}] Due to Proposition \ref{prop:convolution}, it suffices to show the liminf of $t^{-1}\log\mathbb{E}[(\int^{\infty}_{-\infty}\mathcal{Z}^{\mathbf{nw}}(t,x)e^{f_t(x)})^{p}]$ as $t\to \infty$ is bounded below by $(p^3-p)/24+g(p)$. Since $\mathcal{Z}^{\mathbf{nw}}(t, x)$ and the exponential of $f_t (x)$ are both almost surely non-negative, $\int^{\infty}_{-\infty}\mathcal{Z}^{\mathbf{nw}}(t,x)e^{f_t(x)}dx $ is lower bounded by the integral of $ \mathcal{Z}^{\mathbf{nw}}(t, x) e^{f_t (x)} $ over the interval $I(t)$. We claim and prove that there exists a constant $C=C(p)>0$ such that 
\begin{equation}\label{eq:LimInfLowBd}
\mathbb{E}\Big[\Big(\int_{I(t)}\mathcal{Z}^{\mathbf{nw}}(t, x) e^{f_t (x)} dx\Big)^{p}\Big] \geq C \mathbb{E}\Big[(\mathcal{Z}^{\mathbf{nw}}_{p,\epsilon}(t))^p e^{p Y_{p,\epsilon}(t)}\Big]\cdot e^{-\frac{p x_{p-\epsilon, w} (t)^2}{2t}} \mathbb{E}\bigg[\Big(\int_{I(t)} e^{f_t (x)} dx\Big)^p\bigg].
\end{equation}
By assuming this inequality, we first prove $\mathsf{LimInf}_p$. We take logarithm of both sides of \eqref{eq:LimInfLowBd}, divide them by $t$ and let $t\to \infty$. After these set of operations, the liminf of the right hand side as $t\to \infty$, $\epsilon \to 0$ will be bounded below by $(p^3-p)/24+g(p)$ via the inequalities in Proposition~\ref{prop:liminf} and~\ref{prop:liminf2}. From this, the desired inequality of $\mathsf{LimInf}_p$ follows since $\mathbb{E}[(\int^{\infty}_{-\infty}\mathcal{Z}^{\mathbf{nw}}(t,x)e^{f_t(x)} dx)^{p}]$ exceeds $\mathbb{E}[(\int_{I(t)}\mathcal{Z}^{\mathbf{nw}}(t, x) e^{f_t (x)} dx)^{p}]$. In the rest of the proof, we focus on showing \eqref{eq:LimInfLowBd}. We first derive it from the following inequality: there exists $C=C(p)>0$ such that for all $x\in I(t)$,
\begin{equation}\label{eq:InMedIneq}
\mathcal{Z}^{\mathbf{nw}} (t, x)\geq C \mathcal{Z}^{\mathbf{nw}}\big(t, x_{p-\epsilon,\gamma}(t)\big) e^{Y_{p,\epsilon}(t)} .
\end{equation}
Owing to this, $\int_{I(t)} \mathcal{Z}^{\mathbf{nw}}(t, x) e^{f_t (x)}dx$ can be bounded below by the product of $C \mathcal{Z}^{\mathbf{nw}}\big(t, x_{p-\epsilon,\gamma}(t)\big) e^{Y_{p,\epsilon}(t)}$ and $\int_{I(t)}e^{f_t(x)}dx$. This readily implies  
\begin{align}
\mathbb{E}\Big[ \Big(\int_{I(t)} \mathcal{Z}^{\mathbf{nw}}(t, x) e^{f_t (x)} dx\Big)^p\Big] \geq  C\, \mathbb{E}\Big[\mathcal{Z}^{\mathbf{nw}} \big(t, x_{p-\epsilon,\gamma} (t)\big)^p e^{p Y_{p,\epsilon}(t)}\Big] \mathbb{E}\Bigg[\Big(\int_{I(t)} e^{f_t (x)} dx\Big)^p\Bigg]\label{eq:liminf 1} .
\end{align}
From the above inequality, \eqref{eq:LimInfLowBd} follows by multiplying and dividing the right hand side of \eqref{eq:liminf 1} by $\exp(px^{2}_{p-\epsilon,\gamma}(t)/2t)$ and recalling that $\mathcal{Z}^{\mathbf{nw}} \big(t, x_{p-\epsilon,\gamma} (t)\big)^p \exp(px^{2}_{p-\epsilon,\gamma}(t)/2t)$ is equal to $(\mathcal{Z}^{\mathbf{nw}}_{p,\epsilon}(t))^p$ (which is defined in \eqref{eq:Z(t)defin}). It remains to show \eqref{eq:InMedIneq} which we show as follows.

Recall that $Y_{p,\epsilon}(t)$ is defined as an infimum of the right hand side of \eqref{eq:Y(t)defin} over $I(t)$. So for all $x\in I(t)$,
\begin{equation*}
\mathcal{H}^{\mathbf{nw}}(t, x)  \geq \mathcal{H}^{\mathbf{nw}}\big(t, x_{p-\epsilon,\gamma} (t)\big) + Y_{p,\epsilon}(t)- \frac{\big(x -x_{p-\epsilon,\gamma} (t)\big) x_{p-\epsilon,\gamma} (t)}{t} - \frac{\nu \big(x - x_{p-\epsilon,\gamma} (t)\big)^2}{2}.
\end{equation*}
Taking the exponential on the both sides and recalling $\mathcal{Z}^{\mathbf{nw}} (t, x) = e^{\mathcal{H}^{\mathbf{nw}}(t, x)}$, we get
\begin{align*}
\mathcal{Z}^{\mathbf{nw}} (t, x) &\geq \mathcal{Z}^{\mathbf{nw}}\big(t, x_{p-\epsilon,\gamma} (t)\big) e^{Y_{p,\epsilon}(t)} e^{-\frac{\nu (x - x_{p-\epsilon,\gamma} (t))^2}{2}} e^{-\frac{(x- x_{p-\epsilon,\gamma} (t)) x_{p-\epsilon,\gamma} (t)}{t}}.
\end{align*}
By Lemma \ref{lem:condition}, for any fixed $p\in \mathbb{R}_{>0}$, there exists $C^{\prime}=C^{\prime}(p)>$ such that for all $t$ and $0 < \epsilon < \frac{p}{2}$, $|x_{p-\epsilon,\gamma} (t)| \leq C^{\prime}t$. Invoking this bound on the absolute value of $x_{p-\epsilon,\gamma} (t)$, we may lower bound the infimum value of $$\exp(-\nu (x-x_{p-\epsilon,\gamma}(t))^2/2-(x-x_{p-\epsilon,\gamma}(t))x_{p-\epsilon,\gamma}(t)/t)$$ as $x$ varies in $I(t)$ in the right hand side of the above display by some constant $C=C(p)>0$ (recall $I(t) = [\theta_{n(t)}, \theta_{n(t)+1}]$, whose length is no bigger than $1$). This yields \eqref{eq:InMedIneq} and hence, completes the proof of $\mathsf{LimInf}_p$.

\end{proof}

%
%

\begin{proof}[Proof of Proposition~\ref{prop:liminf}]
Our main goal is to show there exists $C=C(p,\epsilon)>0$ such that 
\begin{equation}
\mathbb{E}\Big[(\mathcal{Z}^{\mathbf{nw}}_{p,\epsilon}(t))^p e^{p Y_{p,\epsilon}(t)}\Big]\geq C  \Big(\mathbb{E}\big[(\mathcal{Z}^{\mathbf{nw}}_{p,\epsilon}(t))^{p-\epsilon}\big]\Big)^{\frac{p}{p-\epsilon}} .\label{eq:LowBdInEq}
\end{equation}
Before proceeding to the proof of the above inequality, we demonstrate how this implies Proposition~\ref{prop:liminf}. Taking the logarithm of both sides of \eqref{eq:LowBdInEq}, then dividing them by $t$ and letting $t \to \infty$ yields that 
\begin{equation}
\liminf_{t \to \infty} \frac{1}{t} \log\mathbb{E}\Big[(\mathcal{Z}^{\mathbf{nw}}_{p,\epsilon}(t))^p e^{p Y_{p,\epsilon}(t)}\Big] \geq \liminf_{t \to \infty} \frac{1}{t} \log \Big(\mathbb{E}\Big[(\mathcal{Z}^{\mathbf{nw}}_{p,\epsilon}(t))^{p-\epsilon}\Big]\Big)^{\frac{p}{p-\epsilon}} =
\frac{p}{p-\epsilon} \cdot \frac{(p-\epsilon)^3-(p-\epsilon)}{24},
\end{equation}
To see the last equality, we first note that $\mathcal{Z}^{\mathbf{nw}}_{p,\epsilon}(t)$ is same in distribution with $\mathcal{Z}^{\textbf{nw}}(t, 0)$ by Proposition \ref{prop:stationarity}. Combining this with Proposition \ref{prop:DT19thm} shows that the limit of $t^{-1}\log\mathbb{E}[(\mathcal{Z}^{\mathbf{nw}}_{p,\epsilon}(t))^{p-\epsilon}]$ is equal to $((p-\epsilon)^3-(p-\epsilon))/24$ as $t$ goes to $\infty$. As a consequence, we get the above equality. Letting $\epsilon \to 0$ in the above display, we obtain the desired result of Proposition~\ref{prop:liminf}. Thus, completing the proof of Proposition~\ref{prop:liminf} boils down to showing \eqref{eq:LowBdInEq} which we prove as follows.

 
 We write $(\mathcal{Z}^{\mathbf{nw}}_{p,\epsilon}(t))^{p-\epsilon}$ as a product of $X:=(\mathcal{Z}^{\mathbf{nw}}_{p,\epsilon}(t))^{p-\epsilon} e^{(p-\epsilon) Y_{p,\epsilon}(t)}$ and  $W:=e^{-(p-\epsilon) Y_{p,\epsilon}(t)}$. Applying the H\"{o}lder's inequality, we have $\mathbb{E}[XW]\leq \mathbb{E}[X^{p/(p-\epsilon)}]^{(p-\epsilon)/p}\mathbb{E}[W^{p/\epsilon}]^{\epsilon/p}$. Multiplying both sides of this inequality by $\mathbb{E}[W^{p/\epsilon}]^{-\epsilon/p}$ and raising both sides to the power $p/(p-\epsilon)$ yields 
\begin{equation}\label{eq:liminf 6}
\mathbb{E}\Big[(\mathcal{Z}^{\mathbf{nw}}_{p,\epsilon}(t))^{p} e^{p Y_{p,\epsilon}(t)}\Big]  \geq \Big(\mathbb{E}\Big[(\mathcal{Z}^{\mathbf{nw}}_{p,\epsilon}(t))^{p-\epsilon}\Big]\Big)^{\frac{p}{p-\epsilon}} \Big(\mathbb{E}\Big[e^{-\frac{p(p-\epsilon)}{\epsilon}Y_{p,\epsilon}(t)}\Big]\Big)^{-\frac{\epsilon}{p-\epsilon}}.   
\end{equation}
From the above inequality, \eqref{eq:LowBdInEq} follows once we show that $\mathbb{E}[\exp(-p (p-\epsilon) Y_{p,\epsilon}(t)/\epsilon)]$ is uniformly upper bounded by a constant $C^{\prime}=C^{\prime}(p,\epsilon)$ for all $t > 1$. This will be shown hereafter. For proving this bound, our main tools are the spatial stationarity $\mathcal{H}^{\mathbf{nw}}(t,x)+x^2/2t$ and the tail bounds of Proposition~\ref{prop: CGH 4.4}. By expressing $(x- x_{p-\epsilon,\gamma}(t)) x_{p-\epsilon,\gamma} (t)$ in the definition of $Y_{p,\epsilon}(t)$ as $2^{-1}\big(x^2-(x_{p-\epsilon,\gamma} (t))^2-(x - x_{p-\epsilon,\gamma}(t))^2\big)$, we may rewrite $Y_{p,\epsilon}(t)$ as
\begin{align*}
Y_{p,\epsilon}(t) &= \inf_{x \in I(t)} \Big(\mathcal{H}^{\mathbf{nw}} (t, x) + \frac{x^2}{2t} - \mathcal{H}^{\mathbf{nw}} \big(t, x_{p-\epsilon,\gamma} (t)\big) - \frac{x_{p-\epsilon,\gamma} (t)^2}{2t} - \frac{\big(x-x_{p-\epsilon,\gamma} (t)\big)^2}{2t} + \frac{\nu \big(x- x_{p-\epsilon,\gamma} (t)\big)^2}{2}\Big).
\end{align*}
Using stationarity of $\mathcal{H}^{\mathbf{nw}}(t, x) + \frac{x^2}{2t}$ in Proposition \ref{prop:stationarity}, we can shift the spatial variable $x$ of the above display to the left by $x_{p-\epsilon,\gamma} (t)$ and obtain the distributional identity  
\begin{equation}\label{eq:YDist}
Y_{p,\epsilon}(t) \overset{d}{=} \inf_{x \in I^{0}(t)} \Big(\mathcal{H}^{\mathbf{nw}} (t, x) + \frac{x^2}{2t} - \mathcal{H}^{\mathbf{nw}} (t, 0) - \frac{x^2}{2t} + \frac{\nu x^2}{2}\Big) = \inf_{x \in I^{0}(t)} \Big(\mathcal{H}^{\mathbf{nw}} (t, x) - \mathcal{H}^{\mathbf{nw}} (t, 0) + \frac{\nu x^2}{2}\Big),
\end{equation}
where $I^{0}(t):=[\theta_{n(t)}-x_{p-\epsilon,\gamma}(t), \theta_{n(t)+1}-x_{p-\epsilon,\gamma}(t)] \subseteq [-1, 1]$. 
Recall that \eqref{eq:increment lower tail} of Proposition~\ref{prop: CGH 4.4} provides a lower tail bound of the random variable $\inf_{x\in [0,1]}\{\mathcal{H}^{\mathbf{nw}}(t,x)-\mathcal{H}^{\mathbf{nw}}(t,0)+\frac{\nu x^2}{2t}\}$. Since the law of the process $\{\mathcal{H}^{\mathbf{nw}}(t,x)-\mathcal{H}^{\mathbf{nw}}(t,0)+\frac{\nu x^2}{2t}: x\in [0,1]\}$ is same as $\{\mathcal{H}^{\mathbf{nw}}(t,x)-\mathcal{H}^{\mathbf{nw}}(t,0)+\frac{\nu x^2}{2t}:x\in [0,-1]\}$, there exist $c=c(\delta,\nu)>0$, $s_0=s_0(\delta,\nu)>0$ such that for all $s>s_0$ 
$$\mathbb{P}\big(\inf_{x\in [-1,0]}\{\mathcal{H}^{\mathbf{nw}}(t,x)-\mathcal{H}^{\mathbf{nw}}(t,0)+\frac{\nu x^2}{2t}\}\leq -s\big)\leq \exp(-cs^{\frac{9}{8}-\delta}\big).$$
  Owing to the lower tail bound of $\inf_{x\in [0,1]}\{\mathcal{H}^{\mathbf{nw}}(t,x)-\mathcal{H}^{\mathbf{nw}}(t,0)+\frac{\nu x^2}{2t}\}$ and $\inf_{x\in [-1,0]}\{\mathcal{H}^{\mathbf{nw}}(t,x)-\mathcal{H}^{\mathbf{nw}}(t,0)+\frac{\nu x^2}{2t}\}$ and the distributional identity \eqref{eq:YDist}, for any $\delta \in (0,1)$, there exist $c=c(\delta,\nu)>0$ and $s_0=s_0(\delta,\nu)$ such that $\mathbb{P}(Y_{p,\epsilon}(t)\leq -s)\leq \exp(-cs^{9/8-\delta})$. We chose $\delta =\frac{1}{17}$. It is straightforward to see that $\frac{9}{8}-\frac{1}{17}>1+\frac{1}{17}$. As a consequence, we may write 
\begin{align}
\mathbb{E}[e^{-\frac{p (p-\epsilon)}{\epsilon} Y_{p,\epsilon}(t)}]\leq e^{\frac{p(p-\epsilon)}{\epsilon}s_0}\mathbb{P}(Y_{p,\epsilon}(t)\geq -s_0) +\frac{p(p-\epsilon)}{\epsilon}\int^{\infty}_{s_0}e^{\frac{p(p-\epsilon)}{\epsilon}s}e^{-cs^{1+\frac{1}{17}}}ds.
\end{align}  
The integral on the right hand side of the above inequality is finite and its value is equal to some constant $C^{\prime\prime}=C^{\prime\prime}(\nu,p,\epsilon)>0$. This demonstrates why $\mathbb{E}[\exp(-p (p-\epsilon) Y_{p,\epsilon}(t)/\epsilon)]$ is bounded by some constant which only depends on $\nu,p$ and $\epsilon$. Substituting this bound into \eqref{eq:liminf 6} yields \eqref{eq:LowBdInEq}. This completes the proof of Proposition~\ref{prop:liminf}.  

\end{proof}

\begin{proof}[Proof of Proposition~\ref{prop:liminf2}] To prove \eqref{eq:liminf 5}, we show the following inequality: there exists constant $C=C(p,\epsilon)>0$ such that 
\begin{equation}\label{eq:liminf2Ineq}
\mathbb{E}\Big[\Big(\int_{I(t)} e^{f_t (x)} dx\Big)^p\Big] \geq C|I(t)|^{p} \Big(\mathbb{E}\Big[e^{(p-\epsilon) f_t (x_{p-\epsilon,\gamma} (t)) }\Big]\Big)^{\frac{p}{p-\epsilon}},
\end{equation}
where $|I(t)|$ is the length of the interval $I(t) = [\theta_{n(t)},\theta_{n(t)+1}]$.
Let us explain why the above inequality is sufficient for proving \eqref{eq:liminf 5}. Owing to \eqref{eq:liminf2Ineq}, we may write 
\begin{align*}
\log\Big( e^{-\frac{p x^2_{p-\epsilon,\gamma} (t)}{2t}} \mathbb{E}\Big[\Big(\int_{I(t)} e^{f_t (x)} dx\Big)^p\Big]\Big) 
\geq -\frac{p x^2_{p-\epsilon,\gamma} (t)}{2t} + \frac{p}{p-\epsilon}\log \mathbb{E}\Big[e^{(p-\epsilon) f_t (x_{p-\epsilon,\gamma} (t))}\Big] +p\log |I(t)|+ \log C. 
\end{align*}
Recall that $1\geq |\theta_n - \theta_{n+1}| \geq \max\{1, c|n|^{-\beta}\}$ for some $c>0$, $\beta \in (0,1)$ and all $n \in \mathbb{Z}$ by the \emph{pseudo-stationarity} condition of Definition \ref{Def:HypDef} and $|\theta_{n(t)}|\leq Ct$ for some constant $C=C(p,\epsilon)>0$ by Lemma~\ref{lem:condition}-$(ii)$. From the inequality $|\theta_n - \theta_{n+1}| \geq \max\{1, c|n|^{-\beta}\}$, we get $|\theta_{n}|\geq cn^{1-\beta}$ for some constant $c= c(\beta)>0$. Combining this with the upper bound $|\theta_{n(t)}|\leq Ct$ yields $n(t) \leq |C t|^{1/(1-\beta)}$ and hence, shows  $|I(t)|\geq |Ct|^{-\beta/(1-\beta)}$. Conjugating this last inequality with the upper bound $|I(t)|\leq 1$ implies that $t^{-1}\log |I(t)|$ converges to $0$ as $t\to \infty$. Now, dividing both sides of the above display by $t$ and letting $t \to \infty$ followed by $\epsilon\to 0$, $\gamma\to 0$ shows \eqref{eq:liminf 5} if the following inequality is satisfied 
\begin{align}
\liminf_{\epsilon\to 0}\liminf_{t \to \infty} \frac{1}{t} \Big( -\frac{p x^2_{p-\epsilon,\gamma} (t)}{2t} + \frac{p}{p-\epsilon}\log \mathbb{E}\Big[e^{(p-\epsilon) f_t (x_{p-\epsilon,\gamma} (t))}\Big]\Big)\geq g(p).\label{eq:gplow}
\end{align}
We prove this inequality as follows. By taking a factor $p/(p-\epsilon)$ out of the parantheses of the left hand side of the above display and recalling the definition of $g(\cdot)$ from \eqref{eq:Gp} of Definition~\ref{Def:HypDef}, we may write $$\text{l.h.s. of \eqref{eq:gplow}}\geq \liminf_{\gamma\to 0}\liminf_{\epsilon\to 0}\frac{p}{p-\epsilon}g(p-\epsilon).$$ 
Since $g$ is a convex function, $g$ is continuous at $p$. This shows that $\liminf_{\epsilon\to 0}pg(p-\epsilon)/(p-\epsilon)$ is equal to $g(p)$ and indeed, \eqref{eq:gplow} holds. Consequently, we get \eqref{eq:liminf 5} modulo \eqref{eq:liminf2Ineq}. The rest of the proof will show \eqref{eq:liminf2Ineq}.


Recall the definition of $\mathrm{TV}_{f_t}(\cdot)$ from the proof of Proposition~\ref{prop:InitControl}.  
 Since $\mathrm{TV}_{f_t}(n(t))$ is the supremum of $|f_t (x) - f_t (\theta_{n(t)})|$ over $x \in I(t)$, we know that $f_t (x) \geq f_t (x_{p-\epsilon,\gamma}(t)) - 2\mathrm{TV}_{f_t}(n(t))$ for all $x \in I(t)$. Taking the exponential on both sides of this inequality and then integrating on $I(t)$ shows $\int_{I(t)}\exp(f_t(x))dx\geq |I(t)|\exp(f_t(x_{p-\epsilon,\gamma}(t))-2\mathrm{TV}_{f_t}(n(t)))$ which after raising to $p$-th power and taking expectation yields
\begin{equation}\label{eq:liminf 8}
\mathbb{E}\Big[\Big(\int_{I(t)} e^{f_t (x)} dx\Big)^p\Big] \geq |I(t)|^{p}\mathbb{E}\Big[ e^{p f_t (x_{p-\epsilon,\gamma} (t))} e^{-2p \mathrm{TV}_{f_t}(n(t))}\Big].
\end{equation}
It will suffice to show that the right hand side of the above display is bounded below by a constant multiple of $(\mathbb{E}[e^{(p-\epsilon) f_t (x_{p-\epsilon,\gamma} (t)) }])^{p/(p-\epsilon)}$. To get this lower bound, we write $e^{(p-\epsilon) f_t (x_{p-\epsilon,\gamma} (t))}$ as a product of two random variables $\mathcal{X}:=e^{(p-\epsilon) f_t (x_{p-\epsilon,\gamma} (t))} e^{-(p-\epsilon) \mathrm{TV}_{f_t} (n(t))}$ and $\mathcal{W}:=e^{(p-\epsilon) \mathrm{TV}_{f_t}(n(t))}$. Using the H\"{o}lder inequality, we get $\mathbb{E}[\mathcal{X}\mathcal{W}]\leq (\mathbb{E}[\mathcal{X}^{p/(p-\epsilon)}])^{(p-\epsilon)/p}(\mathbb{E}[\mathcal{W}^{p/\epsilon}])^{\epsilon/p}$. Multiplying both sides of this inequality by $(\mathbb{E}[\mathcal{W}^{p/\epsilon}])^{-\epsilon/p}$ and raising both sides to the power $p/(p-\epsilon)$ results in
\begin{equation*}
\mathbb{E}\Big[e^{p f_t (x_{p-\epsilon,\gamma} (t))} e^{-p \mathrm{TV}_{f_t} (n(t))}\Big] \geq \Big(\mathbb{E}\Big[e^{(p-\epsilon) f_t (x_{p-\epsilon,\gamma} (t))}\Big]\Big)^{\frac{p-\epsilon}{p}}  \Big(\mathbb{E}\Big[e^{\frac{p(p-\epsilon)}{\epsilon} \mathrm{TV}_{f_t} (n(t))}\Big]\Big)^{-\frac{\epsilon}{p-\epsilon}}.
\end{equation*}
By the super-exponential tail bounds for $\mathrm{TV}_{f_t}(\cdot)$ specified in \eqref{eq:ModCont} of Definition~\ref{Def:HypDef}, we know that $\mathbb{E}[\exp(p (p-\epsilon)\mathrm{TV}_{f_t} (n(t))/\epsilon)]$ is upper bounded by a constant. Combining this observation with the inequality of the above display yields 
\[\mathbb{E}\Big[e^{p f_t (x_{p-\epsilon,\gamma} (t))} e^{-2p\mathrm{TV}_{f_t}(n(t))}\Big] \geq C \Big(\mathbb{E}\Big[e^{(p-\epsilon) f_t (x_{p-\epsilon,\gamma} (t)) }\Big]\Big)^{\frac{p}{p-\epsilon}},\]
for some $C>0$. Substituting this into the right hand side of \eqref{eq:liminf 8} gives \eqref{eq:liminf2Ineq}. This completes the proof.

\end{proof}

\subsection{Proof of \eqref{eq:ldp}}\label{sec:ProofLDP}
We take $X(t) = \mathcal{H}^{f_t} (t, 0) + \frac{t}{24}$, by Theorem \ref{thm:main} part (a), we see that $$\lim_{t \to \infty} \frac{1}{t} \log \mathbb{E}\big[\exp(p X(t))\big] = \frac{p^3}{24} + g(p).$$ 
We conclude \eqref{eq:ldp} via applying Proposition \ref{prop:intermittldp}. It suffices to verify $h(p) := g(p) + \frac{p^3}{24}$ indeed satisfies the condition in Proposition \ref{prop:intermittldp}. By Lemma \ref{lem:condition}, $g(p)$ is convex, since $g \in C^1(\mathbb{R}_{>0})$ as we assume, thus $g'(p)$ is increasing. Consequently, $h'(p) = g'(p) + \frac{p^2}{8}$ is continuous and strictly increasing on $(0, \infty)$. Moreover, $\lim_{p \to 0} h'(p) = \lim_{p \to 0} g'(p) + \frac{p^2}{8} = \zeta$ and 
\begin{equation*}
\lim_{p \to \infty} h'(p) = \lim_{p \to \infty} g'(p) + \frac{p^2}{8} = \infty.
\end{equation*}
This implies that $h'(p)$ is a continuous bijection from $(\zeta, \infty)$ to $(0, \infty)$, so it satisfies the condition in Proposition \ref{prop:intermittldp}. Applying this proposition completes the proof of \eqref{eq:ldp}.

\section{Proof of Theorem~\ref{cor:detinitial} \& Theorem~\ref{cor:bminitial}}\label{sec:Cor}

\subsection{Proof of Theorem~\ref{cor:detinitial}}\label{sec:cordet}
Recall that the deterministic initial profile $f_0$ satisfies the growth condition of \eqref{eq:detgrowth} and finite oscillation property as stated in Theorem~\ref{cor:detinitial}. Using $f_0$, we construct the set of functions $(g,\{f_t\}_{t>0})$ with $g\equiv 0$ and $f_t:=f_0$ for all $t>0$.  We claim that $(g,\{f_t\}_{t>0})$ belongs to the class $\mathbf{Hyp}$ which is characterized by three conditions, namely, (1) growth and lower bound conditions, (2) pseudo-stationarity condition and, (3) coherence conditions (see Section~\ref{sec:proofidea}). Modulo this claim, by Theorem~\ref{thm:main}, we have \eqref{eq:upperbound}. Furthermore, since $g\in C^{1}(\mathbb{R}_{>0})$ with $\zeta= \lim_{p\to 0}g^{\prime}(p)=0$, by \eqref{eq:ldp}, we get 
\begin{equation}
\lim_{t \to \infty} \frac{1}{t} \log \mathbb{P}\Big(\mathcal{H}^{f_t}(t, 0) + \frac{t}{24} > ts\Big) = -\sup_{s > 0} \big(ps - p^3/24\big) = -\frac{4 \sqrt{2}}{3} s^{\frac{3}{2}}.
 \end{equation}
This shows \eqref{eq:detLDP}. To complete the proof of Theorem~\ref{cor:detinitial}, it suffices to verify our claim that $(g,\{f_t\}_{t>0})$ belongs to the class $\mathbf{Hyp}$, i.e., $(g,\{f_t\}_{t>0})$ has to satisfy \eqref{eq:Gp},\eqref{eq:condition 3}, \eqref{eq:growth condition}, \eqref{eq:lower bound condition} and \eqref{eq:ModCont}. Note that \eqref{eq:growth condition} and \eqref{eq:lower bound condition} follow immediately from the property $(i)$ of $f_t$ which says that there exist $\delta\in (0,1)$ and constant $C>0$ such that $|f_t(x)|\leq C(1+|x|^{\delta})$ for all $x\in \mathbb{R}$. In what follows, we successively prove \eqref{eq:Gp}, \eqref{eq:condition 3} and \eqref{eq:ModCont} for $(g,\{f_t\}_{t>0})$. 

\textsc{Proof of \eqref{eq:Gp}:} Since $\{f_t(\cdot)\}_{t\geq 0}$ is a sequence of deterministic initial data, the following limit 
\begin{equation}\label{eq:gpproof}
\lim_{t \to \infty} \frac{1}{t} \sup_{x \in \mathbb{R}} \Big(-\frac{px^2}{2t} + \log e^{p f_t (x)}\Big) = 0
\end{equation}
will show \eqref{eq:Gp}. Our main objective is to prove \eqref{eq:gpproof}.
For all large $t>0$, owing to the growth condition $|f_t(x)| \leq C(1+|x|^{\delta})+ \frac{\alpha x^2}{2t}$ for some constant $C>0$ and $\delta,\alpha\in (0,1)$,  
\begin{equation*}
\underbrace{\sup_{x \in \mathbb{R}} \Big\{-\frac{px^2}{2t} - Cp(1 + |x|^{\delta})-\frac{p\alpha x^2}{2t}\Big\}}_{\mathsf{Sup}^{(1)}_t} \leq \sup_{x \in \mathbb{R}} \Big\{-\frac{px^2}{2t} + \log e^{p f_t(x)}\Big\} \leq \underbrace{\sup_{x \in \mathbb{R}} \Big\{-\frac{px^2}{2t} + Cp(1 + |x|^{\delta})+\frac{p\alpha x^2}{2t}\Big\}}_{\mathsf{Sup}^{(2)}_{t}}.
\end{equation*}
In order to prove \eqref{eq:gpproof}, it suffices to show $t^{-1}\mathsf{Sup}^{(1)}_t$ and $t^{-1}\mathsf{Sup}^{(2)}_t$ converge to $0$ as $t$ tends to $\infty$. We only show $t^{-1}\mathsf{Sup}^{(2)}_t\to 0$ as $t\to \infty$. The other convergence follows verbatim. We rewrite $\mathsf{Sup}^{(2)}_t$ as $ Cp + \sup_{x \in \mathbb{R}} \{-p(1-\alpha)x^2/2t + Cp |x|^{\delta}\}$.
We do a change of variable $x \to t^{1/(2-\delta)} x$ in this new form of $\mathsf{Sup}^{(2)}_t$. As a consequence,  we can further rewrite $\mathsf{Sup}^{(2)}_t$ as $Cp+pt^{\delta/ (2-\delta)} \sup_{x \in \mathbb{R}} \{-(1-\alpha)x^2/2 + C|x|^{\delta}\}$. Note that the function $\phi(x)= -(1-\alpha)x^2/2+C|x|^\delta$ satisfies $\phi(0)=0$ and $\phi(+\infty)=\phi(-\infty)=-\infty$. Thus, the supremum value of $\phi(x)$ as $x$ varies in $\mathbb{R}$ is finite. This shows we may upper bound $\mathsf{Sup}^{(2)}_t$ by $Cp+C^{\prime}pt^{\delta/(2-\delta)}$ for some constant $C^{\prime}>0$ and lower bound it by $Cp$. These upper and lower bound when divided by $t$ with letting $t\to\infty$ converge to $0$. This proves the claim that $t^{-1}\mathsf{Sup}^{(2)}_t\to 0$ as $t\to \infty$ and hence, shows \eqref{eq:gpproof}.
\smallskip
\\ 

\textsc{Proof of \eqref{eq:condition 3}:} Note that \eqref{eq:condition 3} will follow if the following limit holds 
\begin{equation}\label{eq:gpCond3}
\lim_{t \to \infty} \frac{1}{t} \log \Big(\int e^{-\frac{p(1-\epsilon) x^2}{2t}} \cdot e^{p (1+\epsilon) f_t (x)} dx\Big) = 0
\end{equation}
for all small $\epsilon>0$. Throughout the rest of the proof, we show \eqref{eq:gpCond3}. Since there exist $C>0$, $\delta,\alpha\in(0,1)$ such that $|f_t(x)| \leq C (1+|x|^{\delta})+ \frac{\alpha x^2}{2t}$ for all large $t>0$, we may write 
\begin{align}
\int e^{-\frac{p((1-\epsilon)+\alpha(1+\epsilon)) x^2}{2t} - Cp(1+\epsilon) |x|^{\delta}}   dx \leq \int e^{-\frac{p(1-\epsilon) x^2}{2t}} e^{p(1+\epsilon) f_t (x)} dx &\leq  e^{Cp (1+\epsilon)}  \int e^{-\frac{p((1-\epsilon)-\alpha(1+\epsilon)) x^2}{2t} + Cp(1+\epsilon) |x|^{\delta}}   dx\label{eq:IntEq}
\end{align}
We choose $\epsilon$ small such that $(1-\epsilon)-\alpha(1+\epsilon)>0$. For proving \eqref{eq:gpCond3}, one needs to show that the logarithm of the left and right hand side of \eqref{eq:IntEq} when divided by $t$ with $t\to \infty$ converge to $0$. We only show this for the right hand side and the other convergence follows from similar argument. For convenience, we denote the right hand of \eqref{eq:IntEq} by $\mathsf{RHS}_{t}$. By a change of variable $x \to t^{1/ (2-\delta)} x$ inside the integral of $\mathsf{RHS}_{t}$, we may write  
\begin{align}\label{eq:RHS}
\mathsf{RHS}_{t}= e^{Cp (1+\epsilon)}t^{1/(2-\delta)} \int  e^{t^{\frac{\delta}{2 -\delta}} \big(-p((1-\epsilon)-\alpha(1+\epsilon))x^2 / 2 + Cp(1+\epsilon) |x|^{\delta}\big)} dx.
\end{align}
By splitting the domain of the above integral into two parts $\{x:|x|\leq 1\}$ and $\{x:|x|>1\}$, we write $\mathsf{RHS}_{t}$ as sum of $\exp(Cp (1+\epsilon))t^{1/(2-\delta)}\mathcal{A}_1$ and $\exp(Cp (1+\epsilon))t^{1/(2-\delta)}\mathcal{A}_2$ where $\mathcal{A}_1$ and $\mathcal{A}_2$ denote the integral in \eqref{eq:RHS} computed over the region $\{x:|x|\leq 1\}$ and $\{x:|x|>1\}$ respectively. To show $t^{-1}\log \mathsf{RHS}_{t}\to 0$ as $t\to \infty$, we first find upper bound to $\mathcal{A}_1$ and $\mathcal{A}_2$. Since $-p\big((1-\epsilon)-\alpha(1+\epsilon)\big)x^2/2 + Cp|x|^{\delta}$ is bounded by some constant $C^{\prime}=C^{\prime}(p,\epsilon,\alpha)>0$ for all $|x|\leq 1$ and all large $t$, we can bound $\mathcal{A}_1$ by $\exp(C^{\prime}t^{\delta/(2-\delta)})$. By using the inequality $|x|^{\delta}< |x|$ for all $|x|>1$ (holds since $\delta <1$), we may write 
\begin{equation*}
\mathcal{A}_2 \leq \int_{|x|>1} e^{t^{\frac{\delta}{2-\delta}} \big(-p((1-\epsilon)-\alpha(1+\epsilon))x^2/2 + Cp(1+\epsilon) |x|\big)} dx\leq \int   e^{t^{\frac{\delta}{2-\delta}} \big((-p(1-\epsilon)-\alpha(1+\epsilon))x^2/2 + Cp(1+\epsilon) |x|\big)} dx,
\end{equation*}
where the last inequality is obtained by leveraging the positivity of the integrand. Note that the integral on the right hand side of the above display is a Gaussian integral. It is straightforward to see that this Gaussian integral can be bounded above by $C_1t^{-\delta/(2(2-\delta))}\exp(C_2t^{\delta/(2-\delta)})$ for some $C_1=C_1(p,\alpha,\epsilon)>0$ and $C_2=C_2(p,\alpha,\epsilon)>0$. Combining the upper bounds on $\mathcal{A}_1$ and $\mathcal{A}_2$ and substituting those into the right hand side of \eqref{eq:RHS} yields 
\begin{align}
\mathsf{RHS}_{t}\leq e^{Cp (1+\epsilon)}t^{1/(2-\delta)}\Big(e^{C^{\prime}t^{\delta/(2-\delta)}} + C_1t^{-\delta/(2(2-\delta))}e^{C_2t^{\delta/(2-\delta)}}\Big),
\end{align}
where $C^{\prime}, C_1, C_2$ are some positive constants depending on $p,\alpha$ and $\epsilon$. Taking logarithm on both sides, dividing them by $t$ and letting $t\to \infty$ shows $t^{-1}\log \mathsf{RHS}_{t}$ converge to $0$. This completes the proof of \eqref{eq:condition 3}.
\smallskip
\\

\textsc{Proof of \eqref{eq:ModCont}:} This will be proved using bounded local oscillation property of the initial data $f_0$ in Theorem~\ref{cor:detinitial}. Recall that $\sup_{I\subset \mathbb{R}, |I|\leq \alpha} \mathrm{Osc}_{I}(f_0)<\infty$ for some $\alpha>0$. Define $\theta_{n}:=(\alpha\wedge 1)n$ for all $n\in \mathbb{Z}$. Notice $\{\theta_n\}_{n\in \mathbb{Z}}$ is a bi-infinite sequence with $\theta_0=0$ and $(\alpha\wedge 1)\leq |\theta_n-\theta_{n+1}|\leq 1$ for all $n$. Furthermore, $\sup_{x\\in [\theta_n,\theta_{n+1}]}|f_0(x) - f_0(\theta_{n})|\leq s_0$ for some $s_0$ since $\sup_{I\subset \mathbb{R}, |I|\leq \alpha} \mathrm{Osc}_{I}(f_0)<\infty$. This implies the pseudo-stationarity condition is trivially satisfied for the sequence $\{f_t\}_{t>0}$ where $f_t= f_0$.

\subsection{Proof of Theorem~\ref{cor:bminitial}}\label{sec:corbm}
\red{For all $t>0$, define $f_t:\mathbb{R}\to \mathbb{R}$ as $f_t (x) := (\sigma_+ B(x) + a_+ x) \mathbbm{1}_{\{x \geq 0\}} - (-\sigma_- B(x) + a_- x) \mathbbm{1}_{\{x \leq 0\}}$ and define $g:(0,\infty)\to\mathbb{R}$ as 
\begin{equation}\label{eq:gp}
g(p)= \frac{p}{2} \max\big\{\frac{p\sigma_+^2}{2} + a_+, \frac{p\sigma_-^2}{2} + a_- ,0\big\}^2.
\end{equation}}
We claim and prove that $(g,\{f_t\}_{t\geq 0})$
belongs to the class $\mathbf{Hyp}$. For now, we assume this claim and show how this implies \eqref{eq:bmLyapunov} and \eqref{eq:bmldp}.

Note that \eqref{eq:bmLyapunov} follows immediately from \eqref{eq:intermittency} of Theorem~\ref{thm:main} since $(g,\{f_t\}_{t\geq 0})\in \mathbf{Hyp}$ by our assumption. We turn now to show \eqref{eq:bmldp}. \red{Recall that $a=\max\{a_{+},a_{-}\}$. One can readily verify $g \in C^1(\mathbb{R}_{>0})$, $\zeta = \lim_{p \to 0} g'(p) =  \frac{\max(a, 0)^2}{2}$. 
By \eqref{eq:ldp} of Theorem~\ref{thm:main}, when $s > \frac{a^2}{2}$, 
\begin{equation}\label{eq:ldp3}
\lim_{t \to \infty} \frac{1}{t} \log \mathbb{P}\Big(\mathcal{H}_t^{f_t} (0) + \frac{t}{24} \geq st\Big) = \sup_{p > 0} \Big\{ps - \frac{p^3}{24} - g(p)\Big\}.
\end{equation}
Since $\lim_{p \to 0} g(p) = 0$, a direction computation shows that 
\begin{equation}\label{eq:ldp2}
\lim_{s \to \frac{\max(a, 0)^2}{2}} \sup_{p > 0}\{ps - \frac{p^3}{24} - g(p)\} = 0.
\end{equation}
Note that the left hand side of \eqref{eq:ldp3} is decreasing in $s$ and is non-positive, hence \eqref{eq:ldp2} implies that when $s \leq \frac{a^2}{2}$, 
\begin{equation*}
\lim_{t \to \infty} \frac{1}{t} \log \mathbb{P}\Big(\mathcal{H}_t^{f_t} (0) + \frac{t}{24} \geq st\Big) = 0.
\end{equation*}
When $\sigma_+ = \sigma_- = 1$,  referring to \eqref{eq:gp}, $g(p) = \frac{p}{2}\max\{\frac{p}{2} + a, 0\}^2$. A direct computation for the right hand side of \eqref{eq:ldp3} yields \eqref{eq:bmldp1} and \eqref{eq:bmldp2}.} 
\bigskip
\\
We now turn to prove our claim $(g,\{f_t\}_{t\geq 0})\in \mathbf{Hyp}$. For this, we serially show that $(g,\{f_t\}_{t\geq 0})$ satisfies \eqref{eq:Gp}, \eqref{eq:condition 3}, \eqref{eq:growth condition}, \eqref{eq:lower bound condition} and \eqref{eq:ModCont}.
\smallskip
\\

\textsc{Proof of \eqref{eq:Gp}:} Since $\mathbb{E}[e^{pB(x)}]= \exp(p^2|x|/2)$ for any $x\in \mathbb{R}$,  \red{we have $\log \mathbb{E}[e^{p f_t (x)}] =  (\frac{p^2 \sigma_+^2 x}{2} + pa_+ x) \mathbbm{1}_{\{x \geq 0\}} + (\frac{p^2 \sigma_-^2 x}{2} - pa_- x )\mathbbm{1}_{\{x \leq 0\}}.$ By a direct computation, we get that the maximum value of $-px^2/2t+ \log \mathbb{E}[e^{p f_t (x)}]$ over $x \in \mathbb{R}$ is given by $\frac{pt}{2} \big(\max\{\frac{p \sigma_+}{2} + a_+,  \frac{p \sigma_-}{2} + a_-, 0\}\big)^2$. 
\begin{equation*}
\lim_{t \to \infty} \frac{1}{t} \sup_{x \in \mathbb{R}}\Big(-\frac{px^2}{2t} +  \log \mathbb{E}\Big[e^{p f_t (x)}\Big]\Big) = \frac{p}{2} \big(\max\{\frac{p \sigma_+^2}{2} + a_+,  \frac{p \sigma_-^2}{2} + a_-, 0\}\big)^2 = g(p).
\end{equation*}}
This verifies \eqref{eq:Gp}.

\textsc{Proof of \eqref{eq:condition 3}:} 
By using the inequality $ pa_+ x \mathbbm{1}_{\{x \geq 0\}} - pa_- x \mathbbm{1}_{\{x \leq 0\}}\leq pa|x|$ and the identity $\mathbb{E}[e^{pB(x)}]= e^{p^2|x|/2}$, \red{we get 
\begin{equation*}
\mathbb{E}\Big[e^{p(1+\epsilon) f_t (x)}\Big] =  e^{ \sigma_+^2 p^2 (1+\epsilon)^2 x/2 + p(1+\epsilon) a_+ x} \mathbbm{1}_{\{x > 0\}} + e^{-\sigma_-^2 p^2 (1+\epsilon)^2 x/2 - p(1+\epsilon) a_- x} \mathbbm{1}_{\{x < 0\}}.
\end{equation*} Owing to this inequality, we may write  
\begin{align}
\notag
&\int e^{-\frac{p(1-\epsilon) x^2}{2t}} \mathbb{E}\Big[e^{p(1+\epsilon) f_t (x)}\Big] dx 
\\
\label{eq:PxEq}
&\leq \int_0^\infty e^{-\frac{p(1-\epsilon) x^2}{2t}}  e^{\sigma_+^2 p^2 (1+\epsilon)^2 x/2 + p(1+\epsilon) a_+ x} dx + \int_{-\infty}^0 e^{-\frac{p(1-\epsilon) x^2}{2t}}  e^{-\sigma_-^2 p^2 (1+\epsilon)^2 x/2 - p(1+\epsilon) a_- x} dx.
\end{align}
We claim that 
\begin{align}
\label{eq:gp1}
&\limsup_{\e \to 0} \lim_{t \to \infty} \frac{1}{t}\log\int_0^\infty e^{-\frac{p(1-\epsilon) x^2}{2t}}  e^{\sigma_+^2 p^2 (1+\epsilon)^2 x/2 + p(1+\epsilon) a_+ x} dx \leq \frac{p}{2}\big(\max\big\{\frac{p \sigma_+^2}{2} + a_+, 0\big\}\big)^2, \\
\label{eq:gp2}
&\limsup_{\e \to 0} \lim_{t \to \infty} \int_{-\infty}^0 e^{-\frac{p(1-\epsilon) x^2}{2t}}  e^{-\sigma_-^2 p^2 (1+\epsilon)^2 x/2 - p(1+\epsilon) a_- x} dx \leq \frac{p}{2}\big(\max\big\{\frac{p \sigma_-^2}{2} + a_-, 0\big\}\big)^2.
\end{align}
Applying \eqref{eq:gp1} and \eqref{eq:gp2} to upper bound the right hand side of \eqref{eq:PxEq} yields \eqref{eq:condition 3}. 
\bigskip
\\
It remains to prove \eqref{eq:gp1} - \eqref{eq:gp2}. We only prove \eqref{eq:gp1}. The proof of \eqref{eq:gp2} follows in a similar way. 
For proving \eqref{eq:gp1}, we first consider the case $a_+< -\sigma_+^2 p/2$ and then, will move onto the case $a_+ \geq-\sigma_+^2 p/2$. 
For any $a_+ <0$ and $p>0$ satisfying $a_+ <-\sigma_+^2 p/2$, there exists $\epsilon_0=\epsilon_0(a,p)>0$ such that $$\frac{\sigma_+^2 p^2}{2}(1+\epsilon)^2+ a_+ p(1+\epsilon)<0,\quad \forall 0<\epsilon<\epsilon_0.$$   
Therefore, we can upper bound the integral on the left hand side of \eqref{eq:gp1} by $\int_0^\infty \exp(-p(1-\epsilon)x^2/2t)dx$ for all $0<\epsilon<\epsilon_0$. Since the limit of $t^{-1}\log(\int_0^\infty \exp(-p(1-\epsilon)x^2/2t)dx)$ is equal to $0$ as $t\to \infty$ for  all small $\epsilon>0$, we get 
\begin{equation}\label{eq:gp3}
\limsup_{\e \to 0} \lim_{t \to \infty} \frac{1}{t}\log\int_0^\infty e^{-\frac{p(1-\epsilon) x^2}{2t}}  e^{\sigma_+^2 p^2 (1+\epsilon)^2 x/2 + p(1+\epsilon) a_+ x} dx\leq 0, \quad \text{when } a_+ <-\frac{\sigma_+^2 p}{2}.
\end{equation}
Now, we turn to the case $a_+ \geq -\sigma_+^2 p/2$. The integral on the left hand side of \eqref{eq:gp1} can be upper bounded the Gaussian integrals $\int e^{-\phi(x)} dx$ 
where
\begin{equation*}
\phi(x)_:=p(1-\epsilon) x^2/(2t)-x p^2 (1+\epsilon)^2 \sigma_+^2/2 - xp(1+\epsilon) a_+.
\end{equation*}  
By a direct computation, one can show that $\int e^{-\phi(x)}dx $ 
is equal to $\sqrt{\frac{2\pi t}{p(1-\epsilon)}} \exp\Big(\frac{(\sigma_+^2 p^2 (1+\epsilon) /2 + p(1+\epsilon) a_+)^2 t}{2p (1-\epsilon)}\Big)$. With this exact formula, it is straightforward to check that 
\begin{equation}
\liminf_{\epsilon \to 0} \limsup_{t \to \infty} \frac{1}{t} \log\bigg(\sqrt{\frac{2\pi t}{p(1-\epsilon)}} \exp\Big(\frac{(\sigma_+^2 p^2 (1+\epsilon) /2 + p(1+\epsilon) a_+)^2 t}{2p (1-\epsilon)}\Big)\bigg) =  \frac{(\sigma_+^2 p^2/2 + p a_+)^2}{2p}. 
\label{eq:gPx}
\end{equation}
Combining \eqref{eq:gp3} - \eqref{eq:gPx} concludes the proof of \eqref{eq:gp1}.
This completes the proof of \eqref{eq:condition 3}.}
\smallskip 
\\
\textsc{Proof of \eqref{eq:growth condition} \& \eqref{eq:lower bound condition}:}
We know  $M_p^{f_t}(t,x)= \mathbb{E}\Big[e^{p f_t(x)}\Big] = e^{p^2 \red{\sigma^2_+}|x|/2 + a_{+} x}$ if $x>0$ and is equal to $e^{p^2 \red{\sigma^2_-} |x|/2 - a_{-} x}$ if $x\leq 0$. From this exact formula of $M^{f_t}_p(t,x)$, it is clear that the growth condition \eqref{eq:growth condition} holds for $\{f_t\}_{t>0}$. Since $f_t$ is same for all $t>0$, so \eqref{eq:lower bound condition} is true.
\smallskip 
\\

\textsc{Proof of \eqref{eq:ModCont}:} From the definition of $f_t(x)$, we know 
\red{\begin{align}
\frac{|f_t (x) - f_t (y)|}{|x-y|^{\frac{1}{2}}}= \begin{cases} 
\frac{|\red{\sigma_+} (B(x)-B(y)) + a_{+}(x-y)|}{|x - y|^\frac{1}{2}}  & x,y \geq 0,\\
\frac{|\red{\sigma_-}(B(x)-B(y)) - a_{-}(x-y)|}{|x - y|^\frac{1}{2}}  & x,y\leq 0,
\end{cases} \label{eq:fxy}
\end{align}}
\red{Let $\sigma = \max(\sigma_+, \sigma_-)$.} From the above relations, we intend to show that the following inequality 
\begin{equation}
\label{eq:bmModCont}
\frac{|f_t (x) - f_t (y)|}{|x-y|^{\frac{1}{2}}} \leq  \frac{\sigma|B(x) - B(y)|}{|x-y|^\frac{1}{2}} + \max(|a_+|, |a_-|)
\end{equation}
holds for all $x,y\in \mathbb{R}$ such that $|x-y|\leq 1$ and $xy \geq 0$. 
Now, we show how \eqref{eq:bmModCont} implies \eqref{eq:ModCont}. We define $\theta_{n}=n$ for all $n\in \mathbb{Z}$. Fix any $n\in \mathbb{Z}$. By \eqref{eq:bmModCont}, we may bound $|f_t(x)- f_t(\theta_n)|$ by $\sigma |B(x)-B(\theta_n)|+ \max\{|a_{+}|,|a_{-}|\}$ for any $x\in [\theta_{n}, \theta_{n+1}]$ \red{(since $|x-\theta_n| \leq 1$ and $x \theta_n \geq 0$)}. As a consequence, for all $s>\max\{|a_{+}|,|a_{-}|\}$,
\begin{align*}
\mathbb{P}\big(\sup_{x\in [\theta_n,\theta_{n+1}]}|f_t(x)- f_t(\theta_n)|\geq s\big)\leq \mathbb{P}\big(\sigma \sup_{x\in [\theta_n,\theta_{n+1}]}|B(x)- B(\theta_n)|\geq s- \max\{|a_{+}|,|a_{-}|\}\big)\leq e^{-c(s-\max\{|a_{+}|,|a_{-}|\})^2},
\end{align*} 
where $c$ is a constant which does not depend on $n$ or $t$. The last inequality follows by applying reflection principle and tail decay of a Gaussian random variable. This shows \eqref{eq:ModCont}.
\bigskip
\\

\section{Auxiliary Results}\label{sec:Aux}
\subsection{Proof of Proposition~\ref{prop:intermittldp}}\label{sec:Aux1}
To prove \eqref{eq:ldp1}, it suffices to show that for $s > \zeta$,
\begin{align}
\underbrace{\limsup_{t\to \infty}\frac{1}{t}\log \mathbb{P}\big(X(t) \, \geq\, st\big) \leq -\max_{p\in \mathbb{R}_{>0}}\{ps - h(p)\}}_{\mathfrak{LimSup}} , \qquad \underbrace{\liminf_{t\to \infty}\frac{1}{t} \,\log\,\mathbb{P}\big(X(t) \, \geq\, st\big) \geq -\max_{p\in \mathbb{R}_{>0}}\{ps - h(p)\}}_{\mathfrak{LimInf}}.  
\end{align}
We first show $\mathfrak{LimSup}$. Recall the definition of $h$. Note that $h'$ is strictly increasing and has a continuous inverse. Let us define $\mathfrak{q}: (0, \infty) \to (\zeta, \infty)$ as $\mathfrak{q}(s) := (h')^{-1} (s)$. Note that the supremum of $ps - h(p)$ is attained when $p $ is equal to $\mathfrak{q}(s)$ and therefore, 
$
\sup_{p > 0} \{ps - h(p)\} = \mathfrak{q}(s) s - h(\mathfrak{q}(s))
$. By using the Markov's inequality, we get $
\mathbb{P}(X(t) \, \geq\, st) \leq e^{-\mathfrak{q}(s) s t }  \mathbb{E}[e^{\mathfrak{q}(s) X(t)}]$. 
We take the logarithm of both sides of this inequality, divide them by $t$ and let $t \to \infty$. Consequently,
\begin{equation*}
\limsup_{t \to \infty} \frac{1}{t} \log\mathbb{P}\Big(X(t) \geq st\Big) \leq \limsup_{t\to \infty} \frac{1}{t} \Big( -\mathfrak{q}(s) s t+\log \mathbb{E}[e^{\mathfrak{q}(s) X(t)}]\Big)= - (s\mathfrak{q}(s)-h(\mathfrak{q}(s))),
\end{equation*}
where the last equality follows from \eqref{eq:temp1}. This proves $\mathfrak{LimSup}$.

We turn to show $\mathfrak{LimInf}$. 
To this aim, we define $\q_{\epsilon}:(0,\infty)\to (\zeta,\infty)$ as $\q_\epsilon(s) = (h')^{-1} (s+\epsilon)$. For convenience of notation, we will use $\q_{\epsilon}$ to denote $\q_{\epsilon}(s)$. Fix any $s,t>0$. We define a exponentially tilted probability measure $\widetilde{\mathbb{P}}_{t, s}$ as
\[\widetilde{\mathbb{P}}_{t,s}\big(X(t) \in A\big) := \frac{1}{\mathbb{E}\big[e^{\q_\epsilon X(t)} \big]} \mathbb{E}\big[e^{\q_\epsilon X(t)} \mathbbm{1}_{\{X(t) \in A\}}\big],\]
where $A$ is a Borel set in $\mathbb{R}$. We denote the expectation with respect to $\widetilde{\mathbb{P}}_{t,s}$ by $\widetilde{\mathbb{E}}_{t,s}$. 
We claim that for showing $\mathfrak{LimInf}$, it suffices to verify for any fixed $s>0$,
\begin{equation}\label{eq:temporary}
\lim_{t \to \infty} \widetilde{\mathbb{P}}_{t,s}\Big(X(t) \in [ts, t(s+2\epsilon)]\Big) = 1.
\end{equation}
Let us first explain how $\mathfrak{LimInf}$ follows from \eqref{eq:temporary}. 
From the definition of $\widetilde{\mathbb{P}}_{t,s}$, we know the following change of measure formula-
\begin{align}\label{eq:temp4}
\mathbb{P}(X(t) \geq ts) 
= \mathbb{\widetilde{E}}_{t,s}[e^{-\q_\epsilon X(t)} \mathbbm{1}_{\{X(t) 
\geq ts\}}] \cdot \mathbb{E}\Big[e^{\q_\epsilon X(t)}\Big].
\end{align}
 Since $\{ts \leq X(t) \leq t(s+2\epsilon)\}$ is contained in $\{X(t)\geq ts\}$, we get the following inequality
\begin{equation}\label{eq:temp7}
\widetilde{\mathbb{E}}_{t,s}\Big[e^{-\mathfrak{q}_\epsilon X(t)} \mathbbm{1}_{\{X(t) \geq ts\}}\Big] \geq \widetilde{\mathbb{E}}_{t,s}\Big[e^{-\mathfrak{q}_\epsilon X(t)} \mathbbm{1}_{\{ts \leq  X(t) \leq t(s+2\epsilon)\}}\Big] \geq e^{-(s+2\epsilon) t\mathfrak{q}_\epsilon } \widetilde{\mathbb{P}}_{t,s}\Big(ts \leq X(t)\leq t(s+2\epsilon) \Big).
\end{equation}
Substituting this inequality into the right hand side of \eqref{eq:temp4} yields 
\begin{equation}\label{eq:temp8}
\mathbb{P}\Big(X(t) \geq ts\Big) \geq e^{-(s + 2\epsilon) t\mathfrak{q}_\epsilon } \mathbb{E}\Big[e^{\mathfrak{q}_\epsilon X(t)}\Big] \widetilde{\mathbb{P}}_{t,s}\Big(st \leq X(t) \leq (s+2\epsilon)t\Big).
\end{equation}


We take the logarithm of both sides of the above inequality and divide them by $t$ for both sides of \eqref{eq:temp8}. Letting $t \to \infty$, we conclude that  
\begin{equation*}
\liminf_{t \to \infty} \frac{1}{t}\log\mathbb{P}\Big(X(t) \geq ts\Big) \geq -(s+2\epsilon)\mathfrak{q}_\epsilon  + \liminf_{t \to \infty}\frac{1}{t} \mathbb{E}\Big[e^{\mathfrak{q}_\epsilon X(t)}\Big] = -(s+2\epsilon) \mathfrak{q}_\epsilon + h(\mathfrak{q_\epsilon}),
\end{equation*}
where the first inequality follows from \eqref{eq:temporary} and the second equality follows from \eqref{eq:temp1}. Recall that $\lim_{\epsilon \to 0} \mathfrak{q}_\epsilon = \mathfrak{q}(s)$. By the continuity of $h$, as $\epsilon \to 0$, the right hand side in the above display converges to $-s\mathfrak{q}(s)  + h(\mathfrak{q}(s))$. Recall that $-s\mathfrak{q}(s)  + h(\mathfrak{q}(s))$ is equal to $-\max_{p\in \mathbb{R}_{>0}}\{sp-h(p)\}$. This completes demonstrating how $\mathfrak{LimInf}$ follows from \eqref{eq:temporary}. Throughout the rest, we prove \eqref{eq:temporary}.
\smallskip
\\

In order to prove \eqref{eq:temporary}, it is enough to demonstrate $\lim_{t \to \infty} \widetilde{\mathbb{P}}_{t,s}\big(X(t) \notin [ts, t(s+2\epsilon)]\big) = 0$. This follows from the combination of  following results: 
\begin{align}\label{eq:duo}
\limsup_{t \to \infty} \frac{1}{t}\log \widetilde{\mathbb{P}}_{t,s}(X(t) < ts) < 0, \qquad \limsup_{t \to \infty} \frac{1}{t}\log \widetilde{\mathbb{P}}_{t,s}\big(X(t) > t(s + 2\epsilon)\big) < 0.
\end{align}
We proceed to prove these below. We first show $\lim_{t \to \infty} t^{-1}\log \widetilde{\mathbb{P}}_{t,s}(X(t) < ts) < 0$. By Markov's inequality, for $\lambda > 0$,
\begin{align*}
\widetilde{\mathbb{P}}_{t,s}(X(t) < ts) &\leq e^{\lambda s t} \widetilde{\mathbb{E}}_{t,s}[e^{-\lambda X(t)}] = e^{\lambda s t} \frac{\mathbb{E}[e^{(\q_\epsilon-\lambda) X(t)}]}{\mathbb{E}[e^{\q_\epsilon X(t)}]}.
\end{align*}
We take the logarithm of both sides and divide them by $t$. Letting $t \to \infty$ and utilizing \eqref{eq:temp1}, we get 
\begin{align*}
\limsup_{t \to \infty} \frac{1}{t} \log \widetilde{\mathbb{P}}_{t,s}\Big(X(t) < ts\Big) &\leq \lambda s + h(\q_\epsilon - \lambda) - h(\q_\epsilon).
\end{align*}
The desired result will follow from the above inequality if we can find a positive $\lambda$ such that the right hand side above is negative. To find such $\lambda$, we consider $H:(0,\infty)\to \mathbb{R}$ as $H(\lambda) := \lambda s + h(\mathfrak{q}_\epsilon - \lambda) -  h(\mathfrak{q}_\epsilon)$. It is straightforward that $H (0) = 0$ and $H'(0) = s - h'(\mathfrak{q}_\epsilon) = -\epsilon < 0$ since $\mathfrak{q}_\epsilon := (h')^{-1} (s + \epsilon)$. Since $H$ has continuous derivative, there exists $\lambda^{*} > 0$ such that $H(\lambda^{*}) < 0$. This implies $
\limsup_{t \to \infty} t^{-1} \log \widetilde{\mathbb{P}}(X(t) < ts) < 0
$ which concludes the desired result.
\smallskip
\\

Now we show $\limsup_{t \to \infty} t^{-1}\log \widetilde{\mathbb{P}}_{t,s}(X(t) > t(s + 2\epsilon)) < 0$. By Markov's inequality, for $\lambda > 0$,
\begin{align*}
\widetilde{\mathbb{P}}_{t,s}\big(X(t) > t(s + 2\epsilon)\big) \leq e^{-\lambda (s + 2\epsilon) t} \widetilde{\mathbb{E}}_{t,s}\big[e^{\lambda X(t)}\big] = e^{-\lambda t(s + 2\epsilon) } \frac{\mathbb{E}[e^{(\q_\epsilon+\lambda) X(t)} ]}{\mathbb{E}[e^{\q_\epsilon X(t)}]}.  
\end{align*}
In the same way as in the previous case, we get
\begin{align*}
\limsup_{t \to \infty} \frac{1}{t} \log \mathbb{P}\Big(X(t) > t(s + 2\epsilon)\Big) &\leq -\lambda (s + 2\epsilon) + h(\q_\epsilon + \lambda) - h(\q_\epsilon).
\end{align*}
To conclude the desired result, we will find $\lambda>0$ such that the right hand side is less than $0$. Like as in before, we consider $\widetilde{H}:(0,\infty)\to \mathbb{R}$ as $\widetilde{H} (\lambda) := -\lambda (s + 2\epsilon) + h(\q_\epsilon + \lambda) - h(\q_\epsilon)$ for which we know that $\widetilde{H} (0) = 0$, $\widetilde{H}'(0) = -s - 2\epsilon + h'(\mathfrak{q}_\epsilon) = -\epsilon$. Combination of these observations with the continuity of $\widetilde{H}^{\prime}$ yields the existence $\lambda>0$ such that the right hand side of the above inequality is less than $0$. This proves the second limiting result of \eqref{eq:duo} and hence, completes the proof.

\subsection{Proof of Lemma~\ref{lem:condition}}\label{sec:Aux2}

For proving (i), we first note that the logarithm of $ M^{f_t}_p(t,x)$ is a convex function of $p\in (0,\infty)$ which can be checked by verifying that the second derivative of $\log M^{f_t}_p(t,x)$ w.r.t. $p$ stays positive for all $p\in(0,\infty)$. Since the convexity is preserved under taking pointwise supremum and/or, limit of a sequence of convex functions, the convexity of $g(p)$ for $p\in (0,\infty)$ now follows from its definition and the fact that $\log M^{f_t}_p(t,x)$ is convex in $p$. To see the non-negativity of $g(p)$, for $t > T_0$, we write 
\begin{equation*}
\sup_{x \in \mathbb{R}} \Big(\frac{-p x^2}{2t} + \log M^{f_t}_p(t,x)\Big) \geq -\frac{pL^2}{2t} + \sup_{x \in [-L, L]}\log M^{f_t}_p(t,x) \geq -\frac{pL^2}{2t} - K,
\end{equation*}
where the first inequality follows noting that the function $\frac{-p x^2}{2t} $ takes its minimum value in the interval $[-L,L]$ at $\pm L$ and the second inequality is obtained by applying the lower bound condition \eqref{eq:lower bound condition} on $f_t$. By dividing both sides of the above inequality by $t$ and letting $t$ go to $\infty$, the limit of the left hand side yields $g(p)$ whereas the right hand side goes to $0$. This proves that $g(p)\geq 0$ for all $p > 0$.
\bigskip
\\
We turn to show (ii). We first prove that for every $p > 0$ and $\gamma > 0$, the set $\mathrm{MAX}^{f}_{p, \gamma}(t)$ is nonempty. By the definition of supremum, it suffices to prove $\sup_{y \in \mathbb{R}} \{-\frac{py^2}{2t} + \log M_{p}^{f_t} (t, y)\}$ is finite. By the growth condition \eqref{eq:growth condition}, we know that for all $t > 0$,
\begin{equation*}
-\frac{p x^2}{2t} + \log M^{f_t}_p(t,x) \leq - \frac{px^2}{2t} + C |x| + \frac{\alpha p x^2}{2t} + C = \frac{p(\alpha - 1) x^2}{2t} + C|x| + C.
\end{equation*}
Since $\alpha < 1$, the supremum of the right hand side over $x \in \mathbb{R}$ is finite, which implies $\sup_{y \in \mathbb{R}} \{-\frac{py^2}{2t} + \log M_p^{f_t} (t, y)\}$ is finite. This shows $\mathrm{MAX}^{f}_{p, \gamma}(t)$ is nonempty.
 
\smallskip
It remains to show that for fixed $p, \gamma > 0$, there exists $T_0$ and $C = C(p, \gamma)$, $|x_{q, \gamma}(t)|\leq Ct$ for all $t > T_0$ and $\frac{p}{2}< q< 2p$. We prove this by contradiction. Suppose that $|x_{q, \gamma}(t)|$ exceeds $\frac{4C t}{q(1-\alpha)}$ for some $q\in [p/2,2p]$ where $C$ is the constant in the growth condition \eqref{eq:growth condition} of $f_t$. In this occasion, we will show that the lower bound to $-qx^2_{q,\gamma}(t)/2t+ M^{f_t}_{q}(t,x_{q,\gamma}(t))$ has to exceeds its upper bound. Below, we separately compute an upper bound and a lower bound to $-qx^2_{q,\gamma}(t)/2t+ M^{f_t}_{q}(t,x_{q,\gamma}(t))$. Before proceeding to those computations, we note that for any $\frac{p}{2} \leq q \leq 2p$, $x\in \mathbb{R}$ and $t\in \mathbb{R}_{>0}$ (using H\"{o}lder's inequality when $f_t(x)$ is random),
\begin{align}
-\frac{q x^2}{2t} + \frac{2q}{p} \log M^{f_t}_{p/2}(t,x)\leq -\frac{q x^2}{2t} + \log M^{f_t}_{q}(t,x) &\leq -\frac{q x^2}{2t} + \frac{q}{2p} \log M^{f_t}_{2p}(t,x). \label{eq:temp3} 
\end{align}

\noindent \textsc{Upper bound to $-qx^2_{q,\gamma}(t)/2t+ M^{f_t}_{q}(t,x_{q,\gamma}(t))$:}
By the growth condition \eqref{eq:growth condition}, the right hand side of the second inequality in \eqref{eq:temp3} is bounded above by $-q (1-\alpha)x^2/2t + C|x|$. Plugging the bound on $|x_{q,\gamma}(t)|$ into this bound shows that $-q x^2_{q, \gamma}(t)/2t + \log M^{f_t}_{q}(t,x_{q,\gamma}(t))$ is bounded above by the maximum of $-q (1-\alpha)x^2/2t + C|x|$ over $x \in
 \mathbb{R}$, which equals $-\frac{2C^2 t}{q (1-\alpha)}$.
\smallskip

\noindent \textsc{Lower bound to $-qx^2_{q,\gamma}(t)/2t+ M^{f_t}_{q}(t,x_{q,\gamma}(t))$:} We claim and prove that $-qx^2_{q,\gamma}(t)/2t+ M^{f_t}_{q}(t,x_{q,\gamma}(t))$ is bounded below by $-\frac{q L^2}{2t} - 4K -\delta$ where $K$ is the same constant as in the lower bound condition \eqref{eq:lower bound condition} of $f_t$. 

Recall that $x_{q, \gamma} (t) \in \mathrm{MAX}^{f}_{q, \gamma}(t)$. Referring to \eqref{eq:MAXDef}, 
\begin{equation*}
-\frac{q x_{q, \gamma} (t)^2}{2t} + \log M^{f_t}_{q}(t,x_{q,\gamma}(t))  \geq \sup_{y \in \mathbb{R}} \Big\{-\frac{qy^2}{2t} + \log M^{f_t}_{q}(t, y)\Big\} - \gamma .
\end{equation*}
Due to the first inequality of \eqref{eq:temp3}, $-\frac{qy^2}{2t} + \log M^{f_t}_{q}(t, y)$ is bounded below by $-qy^2/2+ 2p^{-1}q \log M^{f_t}_{p/2}(t,y)$ for all $y \in \mathbb{R}$. Substituting this inequality into the right hand side of the above display and restricting the supremum over the interval $[-L,L]$, we get  
\begin{equation*}
-\frac{q x_{q, \gamma} (t)^2}{2t} + \log \mathbb{E}\Big[e^{q f_t (x_{q, \gamma} (t))}\Big] \geq \max_{y \in [-L, L]} \Big\{-\frac{q y^2}{2t} + \frac{2q}{p} \log M^{f_t}_{p/2}(t,y)\Big\} - \gamma,
\end{equation*}
where the constant $L$ is same as in the lower bound condition \eqref{eq:lower bound condition} for $\log M^{f_t}_{p/2}(t,y)$. One may bound the right hand side of the above display from below by $-qL^2/2t+ 2p^{-1}q\max_{y\in [-L,L]} \{\log M_{p/2}(t,y)\} - \gamma$. Since $2p^{-1}q\max_{y\in [-L,L]} \{\log M_{p/2}(t,y)\}$ bounded below by $-4K$ due to \eqref{eq:lower bound condition} and $q < 2p$, we find the right hand side in the above display is lower bounded by $-\frac{q L^2}{2t} - 4K -\gamma$.

As we have shown above, if $|x_{q, \gamma} (t)| >  \frac{4C t}{q (1-\alpha) }$, our lower bound to $-\frac{q x_{q,\gamma} (t)^2}{2t} + \log M^{f_t}_q(t,z)$ (which is $-\frac{q L^2}{2t} - 4K -\delta$) exceeds the upper bound (which is $-\frac{2C^2 t}{q (1-\alpha)}$) for all large $t$. This is a contradiction. Hence, the result follows.

\subsection{Proof of Proposition~\ref{prop:convolution}}
\label{sec:Aux3}
The proof is essentially the same as that in Lemma 1.18 of \cite{CH16}. We add more detail. First, we note that 
\begin{equation*}
\mathcal{Z}^{\textbf{nw}, y} (t, x) = p(t, x-y) + \int_0^t \int_{\mathbb{R}} p(t-s, x-z) \mathcal{Z}^{\textbf{nw}, y}(s, z) \xi(s, z) dsdz.
\end{equation*}
where $\mathcal{Z}^{\textbf{nw}, y}$ denotes the solution of the SHE started from the delta initial measure at $y$. Multiplying both sides of the above display by $e^{f(y)}$ and integrating with respect to $y$ over $\mathbb{R}$ shows that $\int_\mathbb{R} e^{f(y)} \mathcal{Z}^{\textbf{nw}, y} (t, x) dy$ satisfies the mild equation of the SHE starting from $e^f$. For any fixed $x \in \mathbb{R}$, the spatial process of $\mathcal{Z}^{\textbf{nw}, y} (t, x)$ in $y$ has the same distribution as $\mathcal{Z}^{\textbf{nw}, x} (t, y)$ by the ``time reversal property" of the SHE. Finally, $\mathcal{Z}^{\textbf{nw}, x} (t, y)$ has the same distribution as  $\mathcal{Z}^{\textbf{nw}} (t, y-x)$ due to the translation invariance of the space-time white noise and the latter has the same distribution as $\mathcal{Z}^{\textbf{nw}} (t, x-y)$ as a process in $y$ due to symmetry. Since $f$ is independent of $\xi$, the law of $\int_\mathbb{R} e^{f(y)} \mathcal{Z}^{\textbf{nw}, y} (t, x) dy$ (as a process in $x$) remains same after replacing $\mathcal{Z}^{\textbf{nw}, y} (t, x)$ with $\mathcal{Z}^{\textbf{nw}} (t, x-y)$ for any fixed $x$. This shows $\int_\mathbb{R} e^{f(y)} \mathcal{Z}^{\textbf{nw}} (t, x-y) dy$ is the mild solution of the SHE started from from $f$ for any fixed $x\in \mathbb{R}$. This completes the proof of Proposition \ref{prop:convolution}.
\bibliographystyle{alpha}
\bibliography{ref}

\end{document}